\title{What is connectivity?}
\author{Jean F. Du Plessis}
\address{Center for Theoretical Physics, Massachusetts Institute of Technology, Cambridge, MA 02139, USA}
\email{jeandp@mit.edu}
\author{Zurab Janelidze}
\address{Department of Mathematical Sciences\\ Stellenbosch University, South Africa 
and  
National Institute for Theoretical and Computational Sciences (NITheCS), Stellenbosch, South Africa}
\email{zurab@sun.ac.za}
\author{Bernardus A. Wessels}
\address{Department of Pure Mathematics and Mathematical Statistics, Centre for Mathematical Sciences, Cambridge, UK and Department of Mathematical Sciences, Stellenbosch University, South Africa}
\email{baw47@cam.ac.uk}
\subjclass[2020]{54D05, 06A06, 06B23, 18A40, 06D22}
\keywords{}
\setlist{leftmargin=1cm}
\declaretheoremstyle[
  headformat = \textcolor{red}{\NUMBER. }\NAME{}\NOTE,
  headindent=0.8cm
]{actstyle}
\theoremstyle{actstyle}
\newtheorem{theorem}{Theorem}
\newtheorem{corollary}[theorem]{Corollary}
\newtheorem{example}[theorem]{Example}
\newtheorem{definition}[theorem]{Definition}
\newcommand{\powerset}[1]{\operatorname{\mathcal{P}}(#1)}
\begin{document}
\vspace{-\baselineskip}
\vspace{-\baselineskip}
\vspace{-\baselineskip}

\null\hfill\begin{tabular}[t]{l@{}}
  \text{MIT-CTP/5835}
\end{tabular}

\vspace{\baselineskip}
\vspace{\baselineskip}

\maketitle

\vspace{-\baselineskip}

\begin{abstract}
In this paper, we explore a taxonomy of connectivity for space-like structures. It is inspired by isolating posets of connected pieces of a space and examining its embedding in the ambient space. The taxonomy includes in its scope all standard notions of connectivity in point-set and point-free contexts, such as connectivity in graphs and hypergraphs (as well as $k$-connectivity in graphs), connectivity and path-connectivity in topology, and connectivity of elements in a frame.
\end{abstract}

\makeatletter
  \def\l@subsection{\@tocline{2}{0pt}{4pc}{5pc}{}}
\makeatother

\section*{Introduction}

Point-free topology generalizes various constructions, properties, and results about topological spaces to frames. The generalization is given by replacing the frame of open subsets of a topological space, ordered under subset inclusion, with an abstract frame --- a complete lattice where finite meets distribute over infinite joins. See \cite{picado_pultr_2012} for an overview of this theory. 

There is a line of work in the general study of connectivity, which takes a slightly different approach to generalizing connected sets than how point-free topology generalizes open sets. Instead of extracting the poset of connected objects and discarding the ambient structure, one abstracts the ambient structure to a complete lattice of its `pieces' (whether connected or not) and isolates a subset of connected elements in the lattice. In general, this subset is not determined uniquely by the lattice. Instead, one is interested in different kinds of such subsets, each giving rise to a notion of connectivity. See \cite{10.2307/1969257,article,Hammer1968Sep,borger_1983,matheron_1985,matheron_serra_1988,Serra1998Nov,dugowson2010connectivity,stadler_stadler_2015} for such an approach to a general theory of connectivity. In most of these works, the ambient complete lattice is the powerset of the underlying set of a mathematical structure in which one is interested to study connectivity. So the existing general theory of connectivity is largely a \emph{point-set} theory: connected objects are subsets of a given set whose elements represent `points' of a mathematical structure. A move to a \emph{point-free} theory is made by J.~Serra in \cite{Serra1998Nov}. In his approach, the lattice no longer need be a powerset lattice; this is motivated by applications to digital imaging, where general lattices allow inclusion of applications dealing with multicolored images. Almost ten years earlier, in \cite{borger1989}, shortly after his pioneering work on general point-set connectivity \cite{borger_1983}, R.~B\"orger proposes a general theory of connectivity in categories --- thus, a `higher' point-free theory. However, in \cite{borger1989}, there is essentially a single general theorem, and the special case in which the categories are posets is not analyzed (instead, other significant categorical examples are explored). 

In this paper we build on Serra's work on point-free connectivity and explore some foundational questions for a general theory whose aim is to study and compare different point-set and point-free notions of connectivity (without considering the broader, categorical situations). Our findings suggest that such a theory can be anchored at what we call a `connectivity lattice' --- potentially the most general notion of point-free connectivity. This context for connectivity turns out to be what you get when you suitably specialize B\"orger's categorical theory to posets. Extension of our work to categories is left for the future. 

A connectivity lattice is defined as a pair $(L,\mathcal{C})$, where $L$ is a complete lattice and $\mathcal{C}$ is a subset of $L$ (called a \emph{connectivity} in $L$), whose elements are called \emph{connected} elements of $L$. We impose only one axiom on this structure: every connected element below an element $x\in L$ is always below a unique maximal connected element below $x$. This condition essentially says that every element can be decomposed into `connected components'; such decomposition gives rise to a Galois connection
$$\mathcal{D}(\mathcal{C})\rightleftarrows L,$$
where $\mathcal{D}(\mathcal{C})$ denotes the poset of \emph{totally mail-disconnected} subsets $S$ of $\mathcal{C}$. By \emph{mail-connectedness} of a subset of $\mathcal{C}$ we mean connectedness relative to the graph structure on $\mathcal{C}$ where an edge between two elements of $\mathcal{C}$ represents that these elements have a common lower bound in $\mathcal{C}$ (see Figure~\ref{fig:mail-connectedness}). So `totally mail-disconnected' refers to the case when no two distinct elements have a common lower bound. 

We call these Galois connections \emph{connectivity adjunctions}. The left adjoint maps an element $S$ of $\mathcal{D}(\mathcal{C})$ to its join $\bigvee S$, while the right adjoint assigns to an element $x\in L$ the set $\mathcal{C}(x)$ of its \emph{connected components}, i.e., maximal connected elements below $x$. In fact, existence of such Galois connection turns out to be a characteristic property of connectivity lattices. Now, Serra's notion of connectivity from \cite{Serra1998Nov} can then be characterized by further requiring that every element of $L$ and the bottom element of $\mathcal{D}(\mathcal{C})$ are Galois closed for the connectivity adjunction.

The approach to connectivity through these Galois connections, as far as we are aware, is new to the field of point-free connectivity. It helps to uncover what seems to be an interesting conceptual taxonomy of connectivity (see Figure~\ref{figA}).
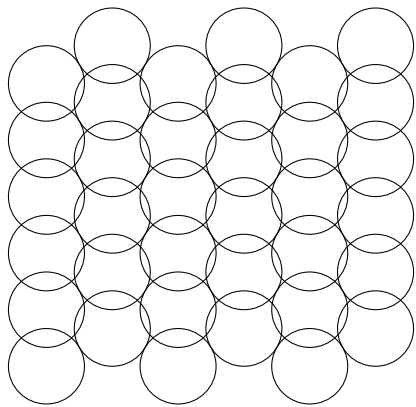
\begin{figure}
\[
\begin{tikzpicture}
    \def\n{5} % Define the number of circles in each row and column
    \def\radius{0.5} % Radius of each circle
    % Loop to draw circles in a square grid
    \foreach \row in {0,...,\n}
        \foreach \col in {0,...,\n} {
            % Calculation of x and y shifts
            \pgfmathsetmacro\xshift{\col*2*\radius*cos(30)}
            \pgfmathsetmacro\yshift{\row*1.5*\radius}
            % Adjust vertical shift for every second row to interlock circles
            \ifodd\col
                \pgfmathsetmacro\yshift{\yshift+\radius}
            \fi
            \draw (\xshift, \yshift) circle (\radius);
        }
\end{tikzpicture}
\]
\caption{Venn diagram visualization of the notion of mail-connectedness in the poset of subsets of a given set. The set of regions marked by the circles is mail-connected, because any two regions can be linked by a sequence of overlapping regions.}
\label{fig:mail-connectedness}
\end{figure}  
One of these taxonomic groups is given by the case when the connectivity adjunction is an isomorphism. In this case, we call the corresponding connectivity lattice $(L,\mathcal{C})$ an \emph{absolute} connectivity lattice. A striking property of absolute connectivity lattices is that for each $L$ there can be at most one compatible $\mathcal{C}$. For a frame $L$ of open sets in a locally connected topological space, $\mathcal{C}$ turns out to be the set of open connected sets. More generally, a frame together with its set of connected elements forms an absolute connectivity lattice. This includes all complete atomic Boolean algebras, in which case $\mathcal{C}$ is the set of atoms. A more primitive class of absolute connectivity lattices is given by adjoining to a complete lattice a new bottom element. In this case, $\mathcal{C}$ is the set of old elements.

A culminating result in the paper is that isomorphism classes of absolute connectivity lattices are in one-to-one correspondence with isomorphism classes of posets having the following property:
\begin{enumerate}[label=(C)]
   \item \label{prop:C} Every mail-connected subset of the poset has a join.
\end{enumerate}
This property can be viewed as a form of `local completeness' of a poset. We can make this precise: as we prove in this paper, a poset has the property \ref{prop:C} if and only if for any element $x$ in the poset, $x^\uparrow$ is a complete lattice. Thus, a poset has the property \ref{prop:C} if and only if the dual poset, viewed as a category, is locally complete in the sense of \cite{freyd_scedrov_1990} (i.e., the subobject posets are complete lattices). The term `locally complete' is used in the latter sense for preorders in \cite{clementino_janelidze_2024}: a poset has the property \ref{prop:C} above if and only if its dual is a locally complete preorder in the sense of \cite{clementino_janelidze_2024}. We avoid the use of the term `locally complete' for two reasons: firstly, it is not suggestive of connectedness, and secondly, it has other meanings as well in order theory and topology. Instead, inspired by Figure~\ref{fig:mail-connectedness}, we call posets having the property \ref{prop:C} --- \emph{chainmails}.

A more appropriate categorical perspective on chainmails, which fits well with B\"orger's categorical theory \cite{borger1989}, seems to be that chainmails are the same as posets which, when viewed as categories, admit colimits of connected diagrams. Indeed, requiring existence of joins of mail-connected subsets of a poset is easily equivalent to requiring existence of joins of connected subsets.

The one-to-one correspondence between isomorphism classes of chainmails and those of absolute connectivity lattices is given by the mapping $(L,\mathcal{C})\mapsto\mathcal{C}$. In fact, the poset of connected elements forms a chainmail even for a general connectivity lattice! The mapping $(L,\mathcal{C})\mapsto\mathcal{C}$ is actually a functor from the category $\mathbf{Con}$ of connectivity lattices to the category $\mathbf{Chm}$ of chainmails. It has a left adjoint given by the mapping $\mathcal{C}\mapsto\mathcal{D}(\mathcal{C})$. The unit of this adjunction is an isomorphism, while the counit is given by the left Galois adjoint $\mathcal{D}(\mathcal{C})\to L$ in the connectivity adjunction. The one-to-one correspondence between isomorphism classes of chainmails and those of absolute connectivity lattices is then given by the equivalence of categories induced by this adjunction. We include these remarks only in a brief, at the end of the paper, since looking into categorical aspects of our work falls outside of the scope of the present work. 

%We leave the category-theoretic considerations touched on above for a thorough investigation in a future work. The main focus of the present paper is developing the taxonomy summarized in Figure~\ref{figA}, which includes an investigation of relations between various taxonomic groups and discussion of examples and counterexamples.

Actually, chainmails turn out to be useful not only for classifying absolute connectivity, but also for providing a short answer to the question posed in the title: \emph{what is connectivity?} It turns out that a subset $\mathcal{C}$ of a complete lattice $L$ is a connectivity if and only if $\mathcal{C}$ is a subchainmail of $L$ (i.e., $\mathcal{C}$ is closed in $L$ under the types of joins required in \ref{prop:C}).

%In fact, if we drop this last requirement and just consider pairs $(L,\mathcal{C})$ consisting of a chainmail $L$ and a subchainmail $\mathcal{C}$, we do not loose connectivity adjunctions; in this case, they will be between $\mathcal{D}(\mathcal{C})$ and $\mathcal{D}(L)$ (note that when $L$ has a bottom element and hence is a complete lattice, $\mathcal{D}(L)$ is isomorphic to $L$). A further generalization would be to consider a chainmail homomorphism $\mathcal{C}\to L$, which would still give rise to an adjunction between $\mathcal{D}(\mathcal{C})$ and $\mathcal{D}(L)$, since there is a functor $\mathbf{Chm}\to\mathbf{JLat}$ from $\mathbf{Chm}$ to the category of complete lattices

\section{Chainmails}\label{secA}  

Intuitively, in a space-like mathematical structure, a connected region is one that cannot be cleanly parted into disjoint regions. Suppose then a family of connected regions is such that any two of the regions are linked by a sequence of regions where any two consecutive regions in the sequence overlap. Any way of cleanly parting the union of these regions will cleanly part at least one of them (see Figure \ref{fig:parting}).
\begin{figure}
    \centering
    \includegraphics[width=0.8\textwidth]{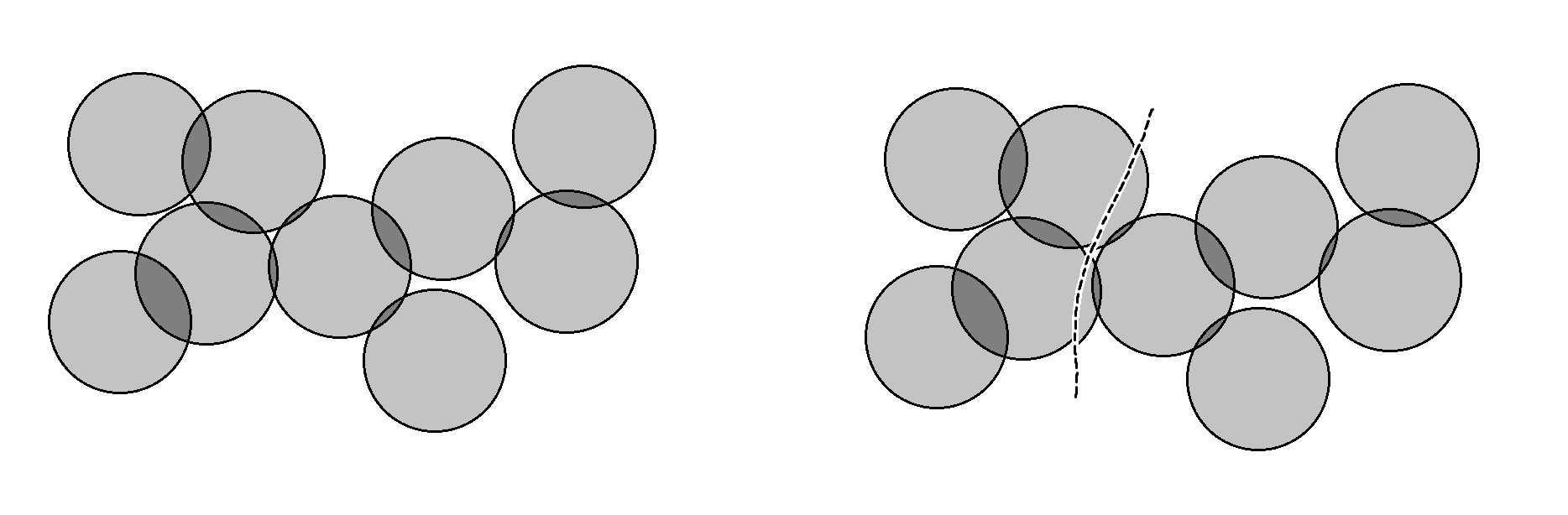}
    \caption{Illustration showing how certain unions of connected regions cannot be cleanly parted without cleanly parting at least one of them}
    \label{fig:parting}
\end{figure}
Therefore, the union cannot be cleanly parted into disjoint regions either and so it is also a connected region. This suggests that the poset of connected regions must have the property \ref{prop:C} from the Introduction.

\begin{definition}\label{def:Chainmail}
In a poset, a \emph{mail} is a non-empty set of elements of the poset having a common lower bound. A set $M$ of elements in the poset is \emph{mail-connected} when $M\neq\varnothing$ and for any two elements $x,y\in M$ there is a sequence $s_1,\dots,s_n\in M$ such that $x=s_1$, $y=s_n$, and $\{s_i,s_{i+1}\}$ is a mail for each $i\in\{1,\dots,n-1\}$. Such sequence $s_1,\dots,s_n$ is called a \emph{path} from $x$ to $y$. A \emph{chainmail} is a poset in which every mail-connected set has a join.
\end{definition}

It turns out that in the definition of a chainmail it is sufficient to require that all mails have joins.

\begin{theorem}\label{thmB} A poset is a chainmail if and only if every mail in it has a join.
\end{theorem}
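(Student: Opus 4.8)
The plan is to dispose of the forward implication immediately and then concentrate all effort on the converse. For the forward direction, I would observe that every mail is in fact mail-connected: if a nonempty set $M$ has a common lower bound $b$, then for any $x,y\in M$ the pair $\{x,y\}$ is a mail (witnessed by $b$), so the length-two path $x,y$ certifies mail-connectedness. Hence in a chainmail every mail, being mail-connected, has a join by definition.

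For the converse, assume every mail has a join and let $M$ be an arbitrary mail-connected set; the task is to produce $\bigvee M$. The first step is a lemma: \emph{every finite mail-connected set $F$ has a join}, which I would prove by induction on $|F|$. The case $|F|=1$ is trivial. For $|F|\ge 2$, view $F$ as the vertex set of the graph whose edges are its two-element mails; mail-connectedness says this graph is connected, so it has a spanning tree, and deleting a leaf $x$ yields a smaller set $F'=F\setminus\{x\}$ that is again mail-connected. By induction $t:=\bigvee F'$ exists. The leaf $x$ is joined by a tree edge to some $y\in F'$, i.e.\ $\{x,y\}$ is a mail with common lower bound $b$; since $b\le y\le t$, the element $b$ is also a common lower bound of $t$ and $x$, so $\{t,x\}$ is a mail and $t\vee x$ exists by hypothesis. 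One then checks $t\vee x=\bigvee F$.

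The second and decisive step handles the possibly infinite $M$. The obstruction is that $M$ itself need not be a mail, so the hypothesis cannot be applied to $M$ directly; the idea is to manufacture a mail out of $M$ by pinning down a base point. Choose $x_0\in M$ (possible since $M\neq\varnothing$) and let $\mathcal{F}_0$ be the collection of finite mail-connected subsets of $M$ that contain $x_0$. By Step~1 each $F\in\mathcal{F}_0$ has a join, and since $x_0\in F$ we have $x_0\le\bigvee F$. Therefore $D_0:=\{\,\bigvee F : F\in\mathcal{F}_0\,\}$ is nonempty and has $x_0$ as a common lower bound, so $D_0$ is a mail and $\bigvee D_0$ exists by hypothesis. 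Finally I would verify $\bigvee D_0=\bigvee M$: it is an upper bound of $M$ because each $x\in M$ lies on a path from $x_0$ whose vertex set $F_x\in\mathcal{F}_0$ satisfies $x\le\bigvee F_x\le\bigvee D_0$; and it is the least one because any upper bound $u$ of $M$ dominates every $\bigvee F$ (as $F\subseteq M$), hence dominates $\bigvee D_0$.

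I expect the main obstacle to be precisely this reduction of an arbitrary mail-connected set to a single mail: the hypothesis only controls mails, and the trick of fixing a base point $x_0$ so that all the finite joins $\bigvee F$ acquire the common lower bound $x_0$ is exactly what makes the hypothesis applicable to the infinite case. The finite lemma, by contrast, should be a routine induction once one notices the stability of the mail relation under passing from $y$ to an upper bound $t\ge y$.
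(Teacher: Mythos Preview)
Your proof is correct and follows essentially the same strategy as the paper: fix a base point $x_0$, show that certain finite mail-connected pieces through $x_0$ have joins (the paper uses paths from $x_0$ and iteratively joins consecutive pairs, while you treat all finite mail-connected subsets via a spanning-tree leaf-deletion induction), observe that these joins form a mail with common lower bound $x_0$, and apply the hypothesis once more. The only difference is cosmetic---your finite lemma is marginally more general than needed, since paths already suffice---but the decisive base-point trick that converts the infinite problem into a single mail is identical to the paper's.
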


\begin{proof} Suppose every mail has a join in the given poset. Then every path has a join. It can be obtained by hierarchically joining mails, as illustrated in the case of a path of length four:
\[
\scalebox{0.8}{\begin{tikzpicture}[row sep={2.1cm,between origins}, column sep={2.1cm,between origins}, every node/.style={draw, circle, inner sep=1pt, minimum size=20pt}]

% Bottom row (five elements)
\node (a1) {$a_0$};
\node[right=of a1] (a2) {$a_1$};
\node[right=of a2] (a3) {$a_2$};
\node[right=of a3] (a4) {$a_3$};
\node[right=of a4] (a5) {$a_4$};

% Second row (joins of consecutive elements of the first row)
\node[above=of $(a1)!0.5!(a2)$] (b1) {$b_0$};
\node[above=of $(a2)!0.5!(a3)$] (b2) {$b_1$};
\node[above=of $(a3)!0.5!(a4)$] (b3) {$b_2$};
\node[above=of $(a4)!0.5!(a5)$] (b4) {$b_3$};

% Draw lines for the second row
\draw (a1) -- (b1) -- (a2);
\draw (a2) -- (b2) -- (a3);
\draw (a3) -- (b3) -- (a4);
\draw (a4) -- (b4) -- (a5);

% Third row (joins of consecutive elements of the second row)
\node[above=of $(b1)!0.5!(b2)$] (c1) {$c_0$};
\node[above=of $(b2)!0.5!(b3)$] (c2) {$c_1$};
\node[above=of $(b3)!0.5!(b4)$] (c3) {$c_2$};

% Draw lines for the third row
\draw (b1) -- (c1) -- (b2);
\draw (b2) -- (c2) -- (b3);
\draw (b3) -- (c3) -- (b4);

% Fourth row (join of consecutive elements of the third row)
\node[above=of $(c1)!0.5!(c2)$] (d1) {$d_0$};
\node[above=of $(c2)!0.5!(c3)$] (d2) {$d_1$};

% Draw lines for the fourth row
\draw (c1) -- (d1) -- (c2);
\draw (c2) -- (d2) -- (c3);

% Top element (join of elements of the fourth row)
\node[above=of $(d1)!0.5!(d2)$] (top) {$e$};

% Draw lines to the top element
\draw (d1) -- (top);
\draw (d2) -- (top);

\end{tikzpicture}}
\]
Consider a mail-connected set $C$ and pick any element $c\in C$. For each $d\in C$ there is a path that connects $c$ with $d$. Consider the set of all joins of such paths. It is a mail whose join will be the join of $C$. Conversely, in a chainmail every mail will have a join since every mail is mail-connected.
\end{proof}

For an element $x$ in a poset, the join of a non-empty subset of $x^\uparrow$ in the entire poset is the same as its join within $x^\uparrow$ (when one exists, the other exists and the two joins match). At the same time, the empty join in $x^\uparrow$ always exists and is given by $x$. So equivalently, a chainmail is a poset where $x^\uparrow$ is a complete lattice for every element $x$, i.e., when the dual poset is a locally complete category in the sense of \cite{freyd_scedrov_1990}. Locally completed preorders have recently been used in \cite{clementino_janelidze_2024}. Other than this, we are not aware of any literature on chainmails or their duals. Of course, chainmail orders are reminiscent of the well-known directed-complete partial orders (every directed set has a join) which arise in computer science. 

Note that a chainmail has smallest element if and only if it is a complete lattice.

\begin{example}\label{exaB}
A subset $C$ of a graph is connected when it is not empty and for any two elements in the subset, there is a path of edges connecting them, where all the vertices of the path are in $C$. Connected sets of a graph form a poset under subset inclusion. Now, a mail in this poset is a set of connected sets whose intersection contains a connected set, and hence at least one vertex $v$. The union of this set is then connected, since any two elements in the union can be connected with a path passing through $v$. This implies that the poset of connected subsets in a graph is a chainmail.
\end{example}

The example above has an application in chainmails. Firstly, mail-connected sets can be seen as connected sets for a graph structure where edges are two-element mails. Secondly, there is a more basic graph structure given by comparable elements: the union of the partial order with its opposite relation. Subsets of posets that are connected with respect to this graph structure we call \emph{connected sets} in the poset. Every connected set is clearly mail-connected. Furthermore, if $M$ is mail with a lower bound $b$, then the join of $M$ is the same as the join of $M\cup\{b\}$, which is clearly connected. So we have the following result.

\begin{theorem}
A poset is a chainmail if and only if every connected set in it has a join.    
\end{theorem}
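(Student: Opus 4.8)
The plan is to prove both implications of the biconditional, using Theorem~\ref{thmB} (a poset is a chainmail iff every mail has a join) as the pivot so that I only need to relate mails and connected sets to each other, rather than re-deriving everything from mail-connected sets. The two observations I need are already sketched in the paragraph preceding the statement, so the proof should be short: I will formalize them and invoke Theorem~\ref{thmB}.

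First I would handle the forward direction. Suppose the poset is a chainmail. By Definition~\ref{def:Chainmail}, every mail-connected set has a join. Now take any connected set $C$, i.e. a set connected with respect to the comparability graph (the union of the order with its opposite). I claim $C$ is mail-connected. The key point is that any comparability edge is in particular a mail: if $x\leq y$ (or $y\leq x$), then $\{x,y\}$ has a common lower bound, namely $x$ (resp.\ $y$). Hence a path in the comparability graph is a path in the sense of Definition~\ref{def:Chainmail}, so every connected set is mail-connected and therefore has a join.

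For the converse, suppose every connected set has a join. By Theorem~\ref{thmB} it suffices to show that every mail has a join. Let $M$ be a mail, so $M\neq\varnothing$ and $M$ has a common lower bound $b$. Consider $M\cup\{b\}$. This set is connected in the comparability sense: every element of $M$ is comparable to $b$ (indeed $b$ lies below it), so $b$ is adjacent to every other element and the set is connected via paths through $b$. By hypothesis $M\cup\{b\}$ has a join, and since $b$ is below every element of $M$, the join of $M\cup\{b\}$ coincides with the join of $M$ (adjoining a lower bound does not change the join). Thus every mail has a join, and by Theorem~\ref{thmB} the poset is a chainmail.

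I do not anticipate a genuine obstacle here; the only point requiring a little care is the bookkeeping in the two reductions, namely verifying that comparability edges really are mails (so connected $\Rightarrow$ mail-connected) and that adjoining a common lower bound to a mail neither destroys connectedness nor alters the join (so mails can be replaced by connected sets for the purpose of existence of joins). Both are immediate from the definitions, and together with Theorem~\ref{thmB} they close the argument.
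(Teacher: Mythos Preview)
Your proof is correct and follows essentially the same approach as the paper: the paragraph preceding the theorem already sketches exactly the two reductions you carry out (connected $\Rightarrow$ mail-connected via comparability edges being mails, and mail $\Rightarrow$ connected by adjoining a common lower bound without changing the join), with Theorem~\ref{thmB} used as the pivot for the converse. Your write-up simply fleshes out those two observations with the routine verifications.
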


\begin{example}\label{exaH}
Connectivity in a graph generalises to connectivity in a `hypergraph', i.e., a set $H$ equipped with a distinguished set $E$ of subsets called `hyperedges'. Define a non-empty subset $C$ of $H$ to be connected, when for any two elements $x,y\in C$ there is a sequence of hyperedges $E_0,\dots,E_n\in E$ such that: 
 (i) $x\in E_0$, $y\in E_n$, (ii) any two consecutive hyperedges in the sequence have nonempty intersection, and (iii) each hyperedge is a subset of $C$. 
The poset of connected sets is then a chainmail. To get connectivity in a graph, simply consider the hypergraph where the hyperedges are the singletons and two-element sets of vertices connected by an edge.    
\end{example}

Mails in a poset constitute a hypergraph structure on the poset. Connected sets with respect to this hypergraph structure are the same as mail-connected sets.

\begin{example}\label{exaC}
The poset of connected subsets of a topological space is a chainmail. This follows from the fact that the union of a mail of connected sets is connected. Indeed, suppose the union is a subset of the union of two disjoint open sets $A$ and $B$. Since there is a connected set which is a subset of every element of the mail, there is a point $x$ which is contained in every element of the mail. This point belongs either to $A$ or $B$. If it belongs to $A$, then every element of the mail will be forced to belong to $A$, by their connectivity. So in this case the union of the elements of the mail is a subset of $A$. Similarly, when the point belongs to $B$, the union of the elements of the mail is a subset of $B$. 
\end{example}

\begin{example}\label{exaD}
The union of a mail of path-connected sets in a topological space will be path-connected, since any two points in the union can be joined by a path that passes through an element of the lower bound of elements of the mail. Thus, path-connected sets in a topological space form a chainmail.
\end{example}

Every poset decomposes into a disjoint union of maximal connected subsets. They are called \emph{connected components} of the poset. It is easy to see that these happen to be the same as \emph{mail-connected components} (i.e., maximal mail-connected subsets).

A non-empty set $S$ of elements of a poset is said to be \emph{totally mail-disconnected} if no two distinct elements of $S$ have a common lower bound, or equivalently, its mail-connected subsets are only the singletons. Every singleton is both mail-connected and totally mail-disconnected, and moreover, singletons are the only such sets. We easily get the following result.

\begin{theorem}\label{thmL}
Every chainmail is a disjoint union of connected chainmails. These are the connected components of the chainmail, each of which has largest element. These largest elements are the maximal elements of the chainmail and they constitute a totally mail-disconnected set.
\end{theorem}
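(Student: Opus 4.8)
The plan is to exploit repeatedly the single observation that comparable elements of a poset lie in the same connected component: if $a \le b$ then $\{a,b\}$ is an edge of the comparability graph, so $a$ and $b$ belong to one and the same connected component. Since the excerpt already establishes that every poset is a disjoint union of its connected components, the remaining work is to show that each connected component is a chainmail with a largest element, and then to match these largest elements with the maximal elements of the whole poset.

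First I would verify that a connected component $K$ of a chainmail $L$ is itself a chainmail. Given a mail-connected set $M \subseteq K$, its join $\bigvee M$ exists in $L$ because $L$ is a chainmail. Picking any $m \in M$ (possible since $M \neq \varnothing$), we have $m \le \bigvee M$, so $\bigvee M$ is comparable to $m \in K$ and therefore lies in $K$. Thus joins of mail-connected subsets of $K$ are computed inside $K$, and $K$ is a chainmail. Together with the connected-component decomposition, this proves that every chainmail is a disjoint union of connected chainmails.

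Next I would produce the largest element of each component. Since $K$ is a connected set it is in particular mail-connected, so the chainmail property yields a join $\bigvee K$, which by the previous paragraph lies in $K$; being an upper bound of $K$ that belongs to $K$, it is the largest element $m_K$ of $K$. To see that the $m_K$ are exactly the maximal elements of $L$: if $m_K \le y$ then $y$ is comparable to $m_K$, hence $y \in K$, hence $y \le m_K$ and $y = m_K$, so $m_K$ is maximal; conversely, any maximal $z$ lies in some component $K$ and satisfies $z \le m_K$, forcing $z = m_K$.

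Finally, total mail-disconnectedness of the set of all $m_K$ is immediate from the same observation: a common lower bound $b$ of $m_K$ and $m_{K'}$ is comparable to both, so $b \in K$ and $b \in K'$; as distinct components are disjoint, $K = K'$. Hence distinct largest elements share no common lower bound. The only step requiring genuine care is the closure argument of the second paragraph --- that a join of elements of $K$ cannot escape $K$ --- and this is precisely where the comparability observation does the real work; every other step is a direct consequence of it.
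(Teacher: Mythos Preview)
Your proof is correct. The paper itself gives no proof of this theorem---it simply writes ``We easily get the following result'' and states it---so your argument supplies exactly the details the paper leaves implicit, and in the spirit of the surrounding discussion (the observation that connected components coincide with mail-connected components, and that comparability traps elements in a single component).
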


In Figure~\ref{fig:chainmail_table}, we show a computer-generated catalog of all connected chainmails having at most $7$ elements. See also Figure~\ref{tab:chml_counts} for the number of such chainmails per number $n$ of vertices for $n=0,\dots,10$, where the graph is given in the logarithmic scale. The sequence of these numbers was not previously listed in the Online Encyclopedia of Integer Sequences but has since been added by the first author \cite{OEIS:A374073}.

\begin{figure}
    \centering
    \includegraphics[width=0.98\textwidth]{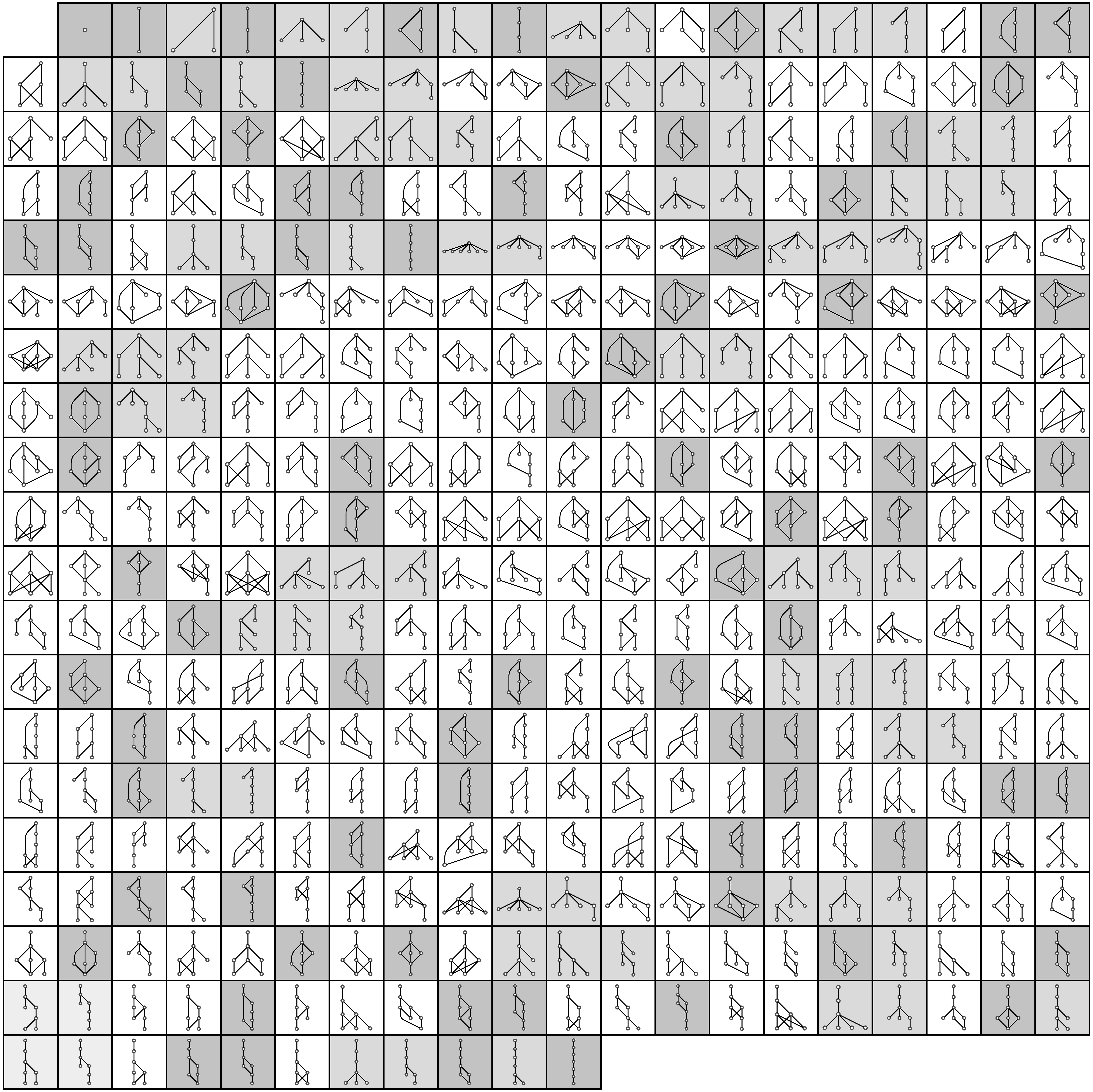}
    \caption{All connected chainmails having at most 7 elements. Lightly shaded cells represent chainmails that are rooted trees (Example~\ref{exaG}), except those that are chains. Darkly shaded cells represent those chainmails that are complete lattices.}
    \label{fig:chainmail_table}
\end{figure}

\pgfplotsset{compat=1.18}

\begin{figure}[ht]
    \centering
    \begin{tikzpicture}
    \begin{axis}[
        xlabel={},
        ylabel={},
        ymin=0,
        xmin=0,
        xmax=10,
        width=10cm,
        height=6cm,
        xtick={0,1,2,3,4,5,6,7,8,9,10},
        yticklabel style={/pgf/number format/fixed},
        scaled y ticks=false,
        ymajorgrids=true,
        grid style=dashed,
        log basis y={10},
        ymode=log, % Use logarithmic scale for y-axis
    ]

    \addplot[
        color=black,
        mark=*,
    ] coordinates {
        (0, 1)
        (1, 1)
        (2, 1)
        (3, 2)
        (4, 5)
        (5, 16)
        (6, 62)
        (7, 303)
        (8, 1842)
        (9, 14073)
        (10, 134802)
    };

    \end{axis}
\end{tikzpicture}
    \begin{tabular}{|r||r|r|r|r|r|r|r|r|r|r|r|}
        \hline
        $n=$ & 0 & 1 & 2 & 3 & 4 & 5 & 6 & 7 & 8 & 9 & 10\\\hline
        Mail-connected chainmails & 1 & 1 & 1 & 2 & 5 & 16 & 62 & 303 & 1842 & 14073 & 134802\\\hline
    \end{tabular}
    \caption{Number of mail-connected (single-component) chainmails having $n$ elements (the graph is logarithmic)}
    \label{tab:chml_counts}
\end{figure}

By a \emph{subposet} $X$ of a poset $Y$ we mean a subset $X$ of $Y$ equipped with the \emph{induced} partial order, i.e., partial order obtained on $X$ by restricting the partial order of $Y$ to $X$. The subposets of a partially ordered set themselves form a partially ordered set under inclusion of underlying sets. We will often not distinguish between the subposets of a poset $Y$ and their underlying sets (i.e., subsets of $Y$).

\begin{definition}
Call a subposet $\Gamma'$ of a chainmail $\Gamma$ a \emph{subchainmail} of $\Gamma$ when it is closed in $\Gamma$ under joins of mail-connected subsets of the subposet $\Gamma'$.
\end{definition}

A subchainmail of a chainmail is clearly itself a chainmail. By an adaptation of the argument used to prove Theorem~\ref{thmB}, we have:

\begin{theorem}\label{thmAA}
A subposet $\Gamma'$ of a chainmail $\Gamma$ is its subchainmail if and only if $\Gamma'$ is closed in $\Gamma$ under joins of mails in the subposet $\Gamma'$.
\end{theorem}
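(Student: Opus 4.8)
The plan is to establish the substantive implication --- that closure under joins of mails forces closure under joins of arbitrary mail-connected subsets --- by retracing the hierarchical construction in the proof of Theorem~\ref{thmB}, while carefully tracking that every join produced along the way lands back inside $\Gamma'$. The converse implication is immediate: every mail is mail-connected, so any subposet closed under joins of mail-connected subsets is \emph{a fortiori} closed under joins of mails.

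So suppose $\Gamma'$ is closed in $\Gamma$ under joins of mails in the subposet $\Gamma'$, and let $C\subseteq\Gamma'$ be mail-connected in $\Gamma'$. The first step is to show that every path in $\Gamma'$ has its join in $\Gamma'$. Given a path $s_1,\dots,s_n$ in $\Gamma'$, I would build the hierarchy of iterated pairwise joins exactly as in the picture in the proof of Theorem~\ref{thmB}: the bottom level consists of the $s_i$, and each subsequent level is obtained by joining consecutive elements of the level below. The claim to verify is that each such pair is a mail \emph{in $\Gamma'$}. This holds by induction on the level: the bottom pairs $\{s_i,s_{i+1}\}$ are mails in $\Gamma'$ by the definition of a path in $\Gamma'$, and two consecutive elements of any higher level, being joins of overlapping pairs from the level below, share a common lower bound coming from that lower level --- an element already known to lie in $\Gamma'$. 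Hence every pairwise join is the join of a mail of $\Gamma'$ and so, by hypothesis, remains in $\Gamma'$; the top of the hierarchy, which is the join of the path, therefore belongs to $\Gamma'$.

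With paths handled, I would conclude as in Theorem~\ref{thmB}. Fix $c\in C$; for each $d\in C$ choose a path from $c$ to $d$ inside $\Gamma'$ and let $j_d\in\Gamma'$ be its join. Each $j_d$ lies above $c$, so $\{j_d\mid d\in C\}$ has $c$ as a common lower bound in $\Gamma'$ and is thus a mail of $\Gamma'$; its join lies in $\Gamma'$ by hypothesis. A sandwich argument then identifies this join with $\bigvee C$: since $d\le j_d\le\bigvee C$ for every $d\in C$, the join of the $j_d$ and the join of $C$ coincide. Hence $\bigvee C\in\Gamma'$, which is precisely the subchainmail condition.

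The only genuine obstacle is the bookkeeping in the first step. The closure hypothesis applies only to mails of the subposet $\Gamma'$, whose witnessing lower bounds must be found within $\Gamma'$; it is therefore not enough that consecutive elements merely overlap somewhere in $\Gamma$. The inductive description of the hierarchy --- each new element being a join of two adjacent elements one stage down, whose shared lower bound was itself produced one stage earlier --- is exactly what guarantees that the required witnesses stay inside $\Gamma'$.
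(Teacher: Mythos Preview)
Your proposal is correct and follows exactly the approach the paper intends: the paper's own proof consists of the single sentence ``By an adaptation of the argument used to prove Theorem~\ref{thmB},'' and you have supplied precisely that adaptation. Your explicit attention to the one genuine subtlety --- that the witnessing lower bounds in the hierarchical construction must lie in $\Gamma'$, not merely in $\Gamma$ --- is the point that makes the adaptation nontrivial, and your inductive bookkeeping handles it cleanly.
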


Chainmails are supposed to represent posets of connected elements in some ambient poset of regions. We now describe a construction that attempts to reconstruct this ambient poset.   

\begin{definition}
For a poset $\Gamma$, we write $\mathcal{D}(\Gamma)$ to denote the poset of totally mail-disconnected sets in $\Gamma$: $D_1\leqslant D_2$ for two totally mail-disconnected sets $D_1$ and $D_2$, whenever each element of $D_1$ is below some element of $D_2$. We call $\mathcal{D}(\Gamma)$ the \emph{exterior} of the poset $\Gamma$.
\end{definition}

The intuition here is that non-connected regions should admit a decomposition into connected components and thus, they can be represented by totally mail-disconnected sets of connected regions.

%\begin{remark}
%A chainmail $\Gamma$ is a complete lattice if and only if it has a bottom element. When $\Gamma$ has a bottom element, $\mathcal{D}(\Gamma)$ is isomorphic to $\Gamma$ appended with a new bottom element.    
%\end{remark}

\begin{theorem}\label{thmF} For a down-closed set $X$ in a chainmail $\Gamma$, the following conditions are equivalent:
\begin{enumerate}
\item $X$ is a down-set of a totally mail-disconnected set in $\Gamma$.

\item $X$ equipped with the induced partial order is a subchainmail of $\Gamma$.
\end{enumerate}
Furthermore, the exterior of $\Gamma$ is isomorphic to the poset of down-closed subchainmails of $\Gamma$, i.e., subchainmails of $\Gamma$ whose underlying sets are down-closed in $\Gamma$. The isomorphism is given by mapping a totally mail-disconnected set to its down-set with induced partial order, and backwards, by mapping a down-closed subchainmail $\Gamma'$ to the set of joins of maximal mail-connected subsets of $\Gamma'$.
\end{theorem}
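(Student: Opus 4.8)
The plan is to prove the equivalence (1) $\Leftrightarrow$ (2) first, and then to read off the asserted isomorphism by checking that the two explicit assignments are mutually inverse and compatible with the orders. Throughout I would write $\downarrow S$ for the down-set of $S$ taken in $\Gamma$, and lean on two earlier facts: that a subchainmail of a chainmail is again a chainmail, and the component decomposition of Theorem~\ref{thmL}.

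For (1) $\Rightarrow$ (2), assuming $X = \downarrow S$ with $S$ totally mail-disconnected, the key step --- and the one I expect to be the main obstacle --- is a localisation lemma: every mail-connected $M \subseteq X$ sits below a single element of $S$. I would prove it by assigning to each $m \in M$ the element of $S$ above it, first checking this element is unique (if $m \leqslant s$ and $m \leqslant s'$, then $m$ makes $\{s,s'\}$ a mail, so $s=s'$), and then showing the assignment is constant along mails (a common lower bound of $m,m'$ is also one of their chosen elements of $S$) and hence, by following paths, constant on all of $M$. The resulting common upper bound $s \in S$ forces $\bigvee M \leqslant s$, so the join --- which exists because $\Gamma$ is a chainmail --- lands in $X$; thus $X$ is closed under joins of mail-connected subsets, i.e.\ a subchainmail.

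For (2) $\Rightarrow$ (1), I would apply Theorem~\ref{thmL} to the chainmail $X$ to obtain its connected components, each with a largest element; let $S$ be the set of these largest elements, equivalently the maximal elements of $X$. Theorem~\ref{thmL} makes $S$ totally mail-disconnected inside $X$, and down-closedness of $X$ promotes this to $\Gamma$, since any common lower bound in $\Gamma$ of two elements of $S \subseteq X$ already lies in $X$. Because each element of $X$ lies below its component's top, and $X$ is down-closed with $S \subseteq X$, I expect to conclude $X = \downarrow S$ directly.

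To finish, write $\Phi(S) = \downarrow S$ and let $\Psi$ be the backward map. Since maximal mail-connected subsets are exactly the connected components and the join of a component is its largest element, $\Psi(\Gamma')$ is precisely the set $S$ of maximal elements used above, so the two equivalence directions already show $\Phi,\Psi$ are well defined and mutually inverse: $\Psi\Phi(S)=S$ once I verify the maximal elements of $\downarrow S$ are precisely $S$ (again by total mail-disconnectedness), and $\Phi\Psi(\Gamma')=\Gamma'$ is the identity $X=\downarrow S$ from the previous step. The last thing to check is that $\Phi$ matches orders: by definition $S_1 \leqslant S_2$ in $\mathcal{D}(\Gamma)$ means every element of $S_1$ lies below some element of $S_2$, which is verbatim $\downarrow S_1 \subseteq \downarrow S_2$. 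Hence $\Phi$ is an order-preserving, order-reflecting bijection, i.e.\ the desired isomorphism of posets.
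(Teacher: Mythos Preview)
Your proof is correct and follows essentially the same route as the paper's: the localisation of a mail (or mail-connected set) below a single element of $S$ via total mail-disconnectedness for (1)$\Rightarrow$(2), and the passage to maximal (mail-)connected components for (2)$\Rightarrow$(1). The only cosmetic differences are that the paper works with mails and appeals to Theorem~\ref{thmAA} rather than handling mail-connected sets directly by path-following, and that you invoke Theorem~\ref{thmL} explicitly where the paper re-derives its content inline; you are also more explicit than the paper in verifying that $\Phi$ and $\Psi$ are mutually inverse and order-compatible, which the paper leaves as ``follows easily.''
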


\begin{proof}
(1)$\Rightarrow$(2): Suppose $X$ is a down-set of totally mail-disconnected set $D$ in $\Gamma$. Consider a subset $M$ of $X$ that is a mail, and let $q$ be a lower bound of $M$. Each element of $M$ must be below one of the elements in $D$. This would force $q$ to be below all such elements of $D$, and so those must be the same element since $D$ is totally mail-disconnected. The join of $M$ will then also be below that element, which means that it is contained in $X$. Applying Theorem~\ref{thmAA} we get that $X$, equipped with induced partial order, is a subchainmail of $\Gamma$.

(2)$\Rightarrow$(1): Suppose $X$ is as in (2). Consider the maximal mail-connected subsets of $X$. The join of each such subset is then the top element in it. Furthermore, the set $D$ of these joins must be totally mail-disconnected. $X$ is then the down-set of $D$.

The last two statements of the theorem follow easily from the reduction just described of an $X$ satisfying (2) into a totally mail-disconnected set.
\end{proof}

\begin{theorem}\label{corB}
For a poset $\Gamma$, the exterior $\mathcal{D}(\Gamma)$ is a complete lattice if and only if $\Gamma$ is a chainmail.
\end{theorem}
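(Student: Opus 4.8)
The plan is to prove the two implications separately. For the direction ``$\Gamma$ a chainmail $\Rightarrow\mathcal{D}(\Gamma)$ complete,'' I would lean on Theorem~\ref{thmF}; for the converse I would argue directly by forming joins of singletons inside $\mathcal{D}(\Gamma)$ and then invoking Theorem~\ref{thmB}.

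Assume first that $\Gamma$ is a chainmail. By Theorem~\ref{thmF}, $\mathcal{D}(\Gamma)$ is isomorphic to the family of down-closed subchainmails of $\Gamma$, ordered by inclusion, so it suffices to show that this family is a complete lattice. I would do this by checking it is closed under arbitrary intersections: an intersection of down-closed sets is down-closed, and if a mail $M$ is contained in $\bigcap_i X_i$ with each $X_i$ a subchainmail, then $M\subseteq X_i$ for every $i$, so $\bigvee M$ (which exists in $\Gamma$) lies in each $X_i$ and hence in the intersection. Theorem~\ref{thmAA} then identifies $\bigcap_i X_i$ as a subchainmail. Since $\Gamma$ is itself a down-closed subchainmail, the family is a closure system and therefore a complete lattice, with meets computed as intersections.

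For the converse, suppose $\mathcal{D}(\Gamma)$ is a complete lattice. By Theorem~\ref{thmB} it is enough to produce a join for an arbitrary mail $M$; fix a common lower bound $b$ of $M$. The key move is to take the join $J$ in $\mathcal{D}(\Gamma)$ of the family of singletons $\{\{m\}:m\in M\}$ (each singleton is totally mail-disconnected), and then show $J$ collapses to a single element. Since $J\geqslant\{m\}$, every $m\in M$ lies below some element of $J$; but any two such elements of $J$ share the lower bound $b$, so they must coincide because $J$ is totally mail-disconnected. This yields a single $j\in J$ above all of $M$, whence $\{j\}$ is an upper bound of the family and $J\leqslant\{j\}$; total mail-disconnectedness then forces $J=\{j\}$. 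Finally, for any upper bound $u$ of $M$ the singleton $\{u\}$ dominates every $\{m\}$, so $\{j\}=J\leqslant\{u\}$ gives $j\leqslant u$, proving $j=\bigvee M$.

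I expect the converse to be the substantive part of the argument: the delicate point is recognizing that the completeness of $\mathcal{D}(\Gamma)$ should be applied to the singletons, and that the common lower bound of the mail is exactly what forces the resulting join $J$ to be a singleton rather than a genuinely multi-element disconnected set. The forward direction is essentially bookkeeping once Theorem~\ref{thmF} is in hand. A minor point to keep honest throughout is the status of the empty totally mail-disconnected set, which serves as the bottom of $\mathcal{D}(\Gamma)$ (corresponding to the empty down-closed subchainmail) but is never needed in the converse, since there we only join non-empty families of non-empty sets.
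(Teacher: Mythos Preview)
Your proposal is correct and follows essentially the same route as the paper's proof: for the forward direction the paper likewise reduces, via Theorem~\ref{thmF}, to observing that arbitrary intersections of (down-closed) subchainmails are again such, and for the converse it forms the join of the singletons $\{\{m\}:m\in M\}$ in $\mathcal{D}(\Gamma)$ and uses the common lower bound to collapse it to a singleton. Your write-up is in fact more careful than the paper's in two places: you explicitly track the ``down-closed'' qualifier when invoking Theorem~\ref{thmF}, and you spell out both why $J=\{j\}$ and why $j$ is the least upper bound of $M$, steps the paper leaves as ``easy to see.''
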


\begin{proof}
Suppose $\mathcal{D}(\Gamma)$ is a complete lattice. Consider a mail $X$ in $\Gamma$. Let $Y$ be the join of $\{\{x\}\mid x\in X\}$ in $\mathcal{D}(\Gamma)$. Then each $x\in X$ is below some element $y_x$ of $Y$. Since $X$ has a lower bound, it is not possible for $y_x\neq y_{x'}$. It is then easy to see that $Y=\{y_x\}$, which also easily implies that $y_x$ must be the join of $X$ in $\Gamma$. So $\Gamma$ is a chainmail by Theorem~\ref{thmB}. Suppose now $\Gamma$ is a chainmail. It is clear that arbitrary intersection of subchainmails is a subchainmail. So the poset of subchainmails of $\Gamma$ is a complete lattice and hence $\mathcal{D}(\Gamma)$ is a complete lattice by Theorem~\ref{thmF}.
\end{proof}

In the next section we study the relation between the exterior of the chainmail of connected elements in some ambient lattice and that lattice. This relation will end up being a classifying tool for the concept of connectivity in a general lattice.

\section{Typical Connectivity Lattices}

A \emph{connectivity space} in the sense of R. ~B\"orger \cite{borger_1983} is a pair $(X,\mathcal{C})$ where $X$ is a set and $\mathcal{C}\subseteq\powerset{X}$ is a set of subsets of $X$ such that:
\begin{enumerate}[label=(CS\arabic*)]
\setcounter{enumi}{-1}
\item \label{cond:CS0} $\varnothing\notin \mathcal{C}$,
\item \label{cond:CS1} For any non-empty subset $Z\subseteq \mathcal{C}$ and $\bigcap Z\neq\varnothing$ implies $\bigcup Z\in \mathcal{C}$.
\item \label{cond:CS2} $\{x\}\in\mathcal{C}$ for any $x\in X$.
\end{enumerate}
Dugowson drops \ref{cond:CS2} in \cite{dugowson2010connectivity}. The same is true in \cite{stadler_stadler_2015}, where, in addition, \ref{cond:CS0} is replaced by its opposite: $\varnothing\in \mathcal{C}$. Note that each of \ref{cond:CS1} and \ref{cond:CS2} required for $\mathcal{C}$ is equivalent to requiring it for $\mathcal{C}^+=\mathcal{C}\setminus\{\varnothing\}$. The set $\mathcal{C}$, whose elements are called \emph{connected sets}, is a chainmail under subset inclusion: axioms \ref{cond:CS0} and \ref{cond:CS1} imply that the union of a mail of connected sets is connected, and hence it is the join of the mail in the poset of connected sets. 

Note that \ref{cond:CS2} has the following equivalent form:
\begin{enumerate}[label=(CS\arabic*$'$)]
\setcounter{enumi}{1}
\item \label{cond:CS2'} For any $Y\subseteq X$ there exists $Z\subseteq\mathcal{C}$ such that $Y=\bigcup Z$.
\end{enumerate} Examples~\ref{exaB} (connectivity in a graph), \ref{exaC} (topological connectivity) and \ref{exaD} (path-connectivity) of chainmails considered in the previous section arise in this way from a suitable connectivity space: in each case, take $X$ to be the underlying set of the ambient structure and let $\mathcal{C}$ be the set of connected subsets. For Example~\ref{exaH} (connectivity in a hypergraph), \ref{cond:CS0} and \ref{cond:CS1} still hold, but \ref{cond:CS2} fails. Below is another such example.

\begin{example}\label{exaG} It is not difficult to verify that for a finite poset $X$, the following conditions are equivalent:
\begin{enumerate}[label=(\roman*)]
    \item For every element $x$ of $X$ there is exactly one element $y$ of $X$ such that $x\leqslant y$ and $\{z\mid x<z<y\}=\varnothing$.
    \item Every mail in $X$ is linearly ordered.
    \item For every element $x$ in $X$, the mail $x^\uparrow$ is linearly ordered.
    \item The Hasse diagram of $X$ is a disjoint union of downward-growing rooted trees.
\end{enumerate}
Since a finite linearly ordered subset of any poset always admits a join (its largest element), a poset satisfying these equivalent conditions is trivially a chainmail. Moreover, the set
$$\mathcal{C}=\{x^\downarrow\mid x\in X\}$$
satisfies \ref{cond:CS0} and \ref{cond:CS1}, but fails to satisfy \ref{cond:CS2}.
\end{example}

The next example shows that there are chainmails that are not isomorphic to the chainmail of connected sets in any connectivity space. 

\begin{example}\label{exaA}
Consider the following poset:
\[\centering \vcenter{\scalebox{0.8}{\begin{tikzpicture}[
    dot/.style={circle, draw=black, fill=black, minimum size=0em, inner sep=0.15em}]
    
    \node[dot,label=right:1] (1) {};
    \node[dot,label=right:3] (3) [above=of 1]{};
    \node[dot,label=right:2] (2) [left=of 3]{};
    \node[dot,label=right:4] (4) [right=of 3]{};
    \node[dot,label=right:5] (5) [above=of 3]{};
    \node[dot,label=right:6] (6) [above=of 4]{};
    \node[dot,label=right:7] (7) [above=of 5]{};
    
    \draw[-] (1)--(2)--(5)--(7);
    \draw[-] (1)--(3)--(5);
    \draw[-] (3)--(6)--(7);
    \draw[-] (4)--(5);
    \draw[-] (4)--(6);
    \end{tikzpicture}}}\]
Since taking away an element from a set of elements in a poset which is below another element in the same set does not affect the join of the set, to check that the poset above is a chainmail it is sufficient to check that joins exist for all \emph{reduced mails}, i.e., mails where no two elements are comparable. Apart from the singletons and the mail containing the top element, in which case joins trivially exist, reduced mails along with their joins are: \[\bigvee \{2,3\}=5, \bigvee \{2,6\}=7, \bigvee \{5,6\}=7.\]
Thus, the poset above is indeed a chainmail. This chainmail cannot be a chainmail of connected sets in any connectivity space, even one satisfying only \ref{cond:CS1}. Indeed, the connected set $4$, being a subset of the union of $2$ and $3$, must contain an element from $3$ (otherwise it would have been a subset of $2$). But then $3$ and $4$ must have a join in the chainmail, which they do not.
\end{example}

According to Definition~3 in \cite{Serra1998Nov}, a subset $\mathcal{C}\subseteq L$ of a complete lattice $L$ is said to define a \emph{connection} if it has the following properties:
\begin{enumerate}[label=(CL\arabic*)]
\setcounter{enumi}{-1}
\item \label{cond:CL0} $0\in \mathcal{C}$. 

\item \label{cond:CL1} The join of every non-empty subset $X\subseteq \mathcal{C}$ that has a lower bound in $L$ different from $0$, belongs to $\mathcal{C}$. 

\item \label{cond:CL2} Every element $a\in L$ is a join of elements from $\mathcal{C}$.
\end{enumerate}
Note that \ref{cond:CL1} and \ref{cond:CL2} are pointfree counterparts of \ref{cond:CS1} and \ref{cond:CS2'}.
As in the point-set case, \ref{cond:CL1} implies that $\mathcal{C}^+=\mathcal{C}\setminus\{0\}$ is a chainmail. Note that in particular, $\mathcal{C}=L$ defines a connection and $L^+=L\setminus\{0\}$ is a chainmail for any complete lattice $L$. As in the point-set case, \ref{cond:CL0} does not affect \ref{cond:CL1} and \ref{cond:CL2}.

See \cite{Serra1998Nov, braga2003connectivity} for examples of connections (and therefore of chainmails) that are relevant to the analysis of digital images. In this field, transitioning from a point-set approach to connectedness (e.g., one given by a connectivity space) to an abstract lattice-based approach, given by connections, becomes essential when analyzing non-binary images, where each pixel can take on more than two colors. While binary images can be fully characterized by the set of pixels sharing one of two possible colors, the same is no longer true for images containing a broader color palette.

In what follows we will unpack the notion of a connection. We begin with the following observation.

\begin{theorem}\label{thmW}
For a subset $\mathcal{C}$ of a complete lattice $L$, \ref{cond:CL1} is equivalent to the following condition:
\begin{itemize}
\item[(CL1$'$)] For any two elements $x\leqslant y$ where $x\neq 0$ in $L$, the set $$[x,y]\cap\mathcal{C}=\{c\in \mathcal{C}\mid x\leqslant c\leqslant y\}$$ contains a largest element if it is non-empty.
\end{itemize}
\end{theorem}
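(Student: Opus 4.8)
The plan is to prove the two implications separately, each by feeding a suitably chosen subset of $\mathcal{C}$ into the other condition and then reading off the conclusion; both directions turn out to be short, and the only real content lies in selecting the right set. Throughout I would use that $L$ is complete, so that all joins in sight exist.

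For \ref{cond:CL1} $\Rightarrow$ (CL1$'$), I would begin from elements $x\leqslant y$ with $x\neq 0$, assume $[x,y]\cap\mathcal{C}$ is non-empty, and apply \ref{cond:CL1} to the set $X=[x,y]\cap\mathcal{C}$ itself. This $X$ is a non-empty subset of $\mathcal{C}$ all of whose elements are $\geqslant x$, so $x$ is a lower bound different from $0$; hence $\bigvee X\in\mathcal{C}$. Since every element of $X$ also lies below $y$, the join satisfies $x\leqslant\bigvee X\leqslant y$, so $\bigvee X$ is itself a member of $[x,y]\cap\mathcal{C}$. Being an upper bound of $X$ that belongs to $X$, it is the largest element, which is exactly what (CL1$'$) asks for.

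For the converse, (CL1$'$) $\Rightarrow$ \ref{cond:CL1}, I would take a non-empty $X\subseteq\mathcal{C}$ with a lower bound $b\neq 0$ and set $y=\bigvee X$. Then $b\leqslant c\leqslant y$ for every $c\in X$, so $X\subseteq[b,y]\cap\mathcal{C}$ is non-empty while $b\neq 0$; (CL1$'$) therefore supplies a largest element $m$ of $[b,y]\cap\mathcal{C}$. The crux is to identify $m$ with $\bigvee X$: since $m$ dominates every element of the interval, it is an upper bound of $X$, giving $\bigvee X\leqslant m$, whereas $m\leqslant y=\bigvee X$ by construction. The two inequalities squeeze $m=\bigvee X$, and since $m\in\mathcal{C}$ we obtain $\bigvee X\in\mathcal{C}$.

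I do not anticipate a serious obstacle; the argument is essentially bookkeeping once the right interval is chosen. The one point deserving care is the final squeeze in the converse---confirming that the largest element of $[b,y]\cap\mathcal{C}$ is \emph{forced} to coincide with $\bigvee X$ rather than merely bounding it---together with keeping consistent track, in both directions, of the hypothesis that the relevant lower bound (namely $x$, respectively $b$) is different from $0$, so that the two conditions line up precisely.
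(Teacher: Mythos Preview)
Your proof is correct and follows essentially the same approach as the paper's. For the converse you are in fact slightly more direct: you apply (CL1$'$) once to the single interval $[b,\bigvee X]$ and squeeze, whereas the paper applies it to each $[z,\bigvee X]$ for $z\in X$ and then argues that all the resulting maxima coincide with the maximum of $[x,\bigvee X]\cap\mathcal{C}$---both routes land on the same element.
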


\begin{proof}
Suppose \ref{cond:CL1} holds. Consider two elements $x\leqslant y$ of $L$ with $[x,y]\cap\mathcal{C}$ non-empty. By \ref{cond:CL1}, the join $\bigvee [x,y]\cap\mathcal{C}$ belongs to $\mathcal{C}$ and hence is the maximal element in $\bigvee [x,y]\cap\mathcal{C}$. Now suppose (CL1$'$) holds. Consider a non-empty subset $X\subseteq \mathcal{C}$ having a lower bound $x$ different from $0$. For each $z\in X$, let $c_z$ be the unique maximal element of $[z,\bigvee X]\cap\mathcal{C}$. Then each $c_z$ is also the unique maximal element of $[x,\bigvee X]\cap\mathcal{C}$. So all $c_z$'s are equal to some $c\in \mathcal{C}$. Since $c\leqslant \bigvee X$ and each element of $X$ is below $c$, we get that $\bigvee X=c$, thus proving that $\bigvee X\in\mathcal{C}$.
\end{proof}

We combine the approach of B\"orger to require the opposite of \ref{cond:CL0} and that of Dugowson from \cite{dugowson2010connectivity} to drop \ref{cond:CS2} and introduce the following concept.

\begin{definition}
A \emph{typical connectivity lattice} is a pair $(L,\mathcal{C})$ where $L$ is a complete lattice and $\mathcal{C}$ is a subset of $L$ such that \ref{cond:CL1} holds, but \ref{cond:CL0} does not hold; \emph{connected} elements of a typical connectivity lattice $(L,\mathcal{C})$ are the elements of $\mathcal{C}$. When $(L,\mathcal{C})$ is a typical connectivity lattice, the subset $\mathcal{C}$ of $L$ is called a \emph{typical connectivity} in $L$.
\end{definition}

The following result gives Theorem~1.4(i) in \cite{borger_1983} in the case of connectivity spaces. We omit the proof as it is an easy adaptation of the proof of Theorem~\ref{thmB}.

\begin{theorem}\label{lemC} In a typical connectivity lattice $(L,\mathcal{C})$, the join of a mail-connected (in $L^+$) set $C$ of connected elements is connected.
\end{theorem}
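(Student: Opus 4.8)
The plan is to mirror the proof of Theorem~\ref{thmB}, with \ref{cond:CL1} playing the role that the assumption ``every mail has a join'' played there, but now carrying the extra information that the relevant joins are not merely joins but \emph{connected} elements. The one observation that makes \ref{cond:CL1} applicable throughout is that connected elements are nonzero: since $(L,\mathcal{C})$ is a typical connectivity lattice, \ref{cond:CL0} fails, so $0\notin\mathcal{C}$. Consequently, whenever a family of connected elements has a common lower bound that is itself a connected element, that lower bound is different from $0$, and \ref{cond:CL1} applies to yield a connected join.

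First I would show that the join of a path of connected elements is connected. If $s_1,\dots,s_n$ is a path in $C$, then each adjacent pair $\{s_i,s_{i+1}\}$ is a mail in $L^+$, so it has a lower bound different from $0$; as $s_i,s_{i+1}\in\mathcal{C}$, condition \ref{cond:CL1} gives $s_i\vee s_{i+1}\in\mathcal{C}$. Joining adjacent elements produces a shorter sequence of connected elements in which consecutive terms share a connected element (one of the original $s_i$) as a common, hence nonzero, lower bound. Applying \ref{cond:CL1} again and iterating up the hierarchy exactly as in the picture accompanying the proof of Theorem~\ref{thmB}, one concludes that the join of the whole path is connected.

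Finally I would treat a general mail-connected $C$. Fix $c\in C$; for each $d\in C$ choose a path from $c$ to $d$ and let $p_d$ be its join. By the previous step $p_d\in\mathcal{C}$, and since every element of the path lies in $C$ we have $c\leqslant p_d\leqslant\bigvee C$. Thus $\{p_d\mid d\in C\}$ is a set of connected elements with the common nonzero lower bound $c$, so \ref{cond:CL1} yields $\bigvee\{p_d\mid d\in C\}\in\mathcal{C}$. Because $d\leqslant p_d$ for every $d$ while each $p_d\leqslant\bigvee C$, this join equals $\bigvee C$, which is therefore connected. I do not anticipate a genuine obstacle; the only thing to watch is the nonzero-lower-bound bookkeeping, which is precisely where the hypothesis $0\notin\mathcal{C}$ is used and where the argument departs from the purely order-theoretic Theorem~\ref{thmB}.
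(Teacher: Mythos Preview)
Your proposal is correct and is precisely the adaptation of the proof of Theorem~\ref{thmB} that the paper intends: the paper omits the proof, stating only that it is ``an easy adaptation of the proof of Theorem~\ref{thmB}'', and your write-up carries out exactly that adaptation, with \ref{cond:CL1} supplying connectedness of joins at each stage and the hypothesis $0\notin\mathcal{C}$ ensuring the nonzero lower bounds needed to invoke it.
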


In any typical connectivity lattice $(L,\mathcal{C})$, we can partition the set of connected elements below an element $x\in L$ in such a way that each member $C$ of the partition is a maximal mail-connected set in $L^+$. Consider the join $c$ of each member $C$ of such partition. By the theorem above, $c$ will itself be connected and hence $c\in C$. We denote the set of such $c$'s by $\mathcal{C}(x)$. We call its elements \emph{$\mathcal{C}$-components} of $x$ (sometimes, just \emph{components} or \emph{connected components}). Note that $\mathcal{C}(x)\subseteq L^+$.

\begin{theorem}\label{lemB}\label{lem:x_star_general}\label{thmJ}
For every typical connectivity lattice $(L,\mathcal{C})$, the map $$L\to \mathcal{D}(\mathcal{C}),\quad x\mapsto \mathcal{C}(x)$$ has a left Galois adjoint given by
$S\mapsto \bigvee S.$ Furthermore, for any element $x\in L$, the set $\mathcal{C}(x)$ is totally mail-disconnected in $L^+$. Moreover, $x=\bigvee \mathcal{C}(x)$ if and only if $x$ is a join of connected elements of $L$.
\end{theorem}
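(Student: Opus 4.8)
The plan is to establish the three assertions in the order: first that $\mathcal{C}(x)$ is totally mail-disconnected in $L^+$ (which simultaneously certifies that $x\mapsto\mathcal{C}(x)$ is a well-defined map into $\mathcal{D}(\mathcal{C})$), then the Galois adjunction, and finally the join characterization. Throughout I would use that (CL0) fails, so $0\notin\mathcal{C}$ and every connected element is nonzero.

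For the disconnectedness, recall $\mathcal{C}(x)$ consists of the joins $\bigvee C$ as $C$ ranges over the maximal mail-connected subsets of the set $\{c\in\mathcal{C}\mid c\leqslant x\}$ of connected elements below $x$; by Theorem~\ref{lemC} each such join is connected and lies below $x$, and, being above every member of $C$ and hence forming a mail with each of them, it is in fact the top element of $C$. I would argue by contradiction: if two distinct components $C_1\neq C_2$ produced joins $c_1=\bigvee C_1$ and $c_2=\bigvee C_2$ sharing a common nonzero lower bound in $L$, then $\{c_1,c_2\}$ is a mail in $L^+$; concatenating a path inside $C_1$ ending at $c_1$, the edge $\{c_1,c_2\}$, and a path inside $C_2$ starting at $c_2$ exhibits $C_1\cup C_2$ as mail-connected, contradicting the maximality of the components. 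Hence no two elements of $\mathcal{C}(x)$ share a nonzero lower bound, which is the $L^+$-version of total mail-disconnectedness; since a common lower bound in the poset $\mathcal{C}$ is in particular a nonzero lower bound in $L$, this condition is stronger than, and therefore implies, total mail-disconnectedness in $\mathcal{C}$, so $\mathcal{C}(x)\in\mathcal{D}(\mathcal{C})$.

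For the adjunction I would verify directly the defining equivalence $\bigvee S\leqslant x\iff S\leqslant\mathcal{C}(x)$ for $S\in\mathcal{D}(\mathcal{C})$ and $x\in L$, from which monotonicity of both maps follows automatically. For the forward direction, $\bigvee S\leqslant x$ forces each $s\in S$ to be a connected element below $x$, hence $s$ lies in some component $C$ and satisfies $s\leqslant\bigvee C\in\mathcal{C}(x)$; thus every element of $S$ lies below some element of $\mathcal{C}(x)$, i.e.\ $S\leqslant\mathcal{C}(x)$. Conversely, if $S\leqslant\mathcal{C}(x)$ then each $s\in S$ lies below some $c\in\mathcal{C}(x)$, and since every element of $\mathcal{C}(x)$ is a join of elements all $\leqslant x$ it satisfies $c\leqslant x$; hence $s\leqslant x$ for all $s\in S$ and $\bigvee S\leqslant x$.

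For the final equivalence, the implication that $x=\bigvee\mathcal{C}(x)$ makes $x$ a join of connected elements is immediate since $\mathcal{C}(x)\subseteq\mathcal{C}$. For the converse, suppose $x=\bigvee Y$ with $Y\subseteq\mathcal{C}$. One always has $\bigvee\mathcal{C}(x)\leqslant x$; for the reverse, each $y\in Y$ is a connected element below $x$, hence lies below some member of $\mathcal{C}(x)$, giving $y\leqslant\bigvee\mathcal{C}(x)$ and therefore $x=\bigvee Y\leqslant\bigvee\mathcal{C}(x)$. I expect the main obstacle to be the disconnectedness step: it is where one must carefully combine maximality of the mail-connected components with Theorem~\ref{lemC}, and where one must keep straight the distinction between mail-disconnectedness computed in $L^+$ and in the subposet $\mathcal{C}$. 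The adjunction and the join characterization are then essentially bookkeeping on top of the component decomposition.
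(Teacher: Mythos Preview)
Your proposal is correct and follows essentially the same route as the paper's proof: verify the Galois condition $\bigvee S\leqslant x\iff S\leqslant\mathcal{C}(x)$ directly, observe that total mail-disconnectedness of $\mathcal{C}(x)$ in $L^+$ follows from maximality of the mail-connected components (the paper simply declares this ``obvious from the definition''), and deduce the join characterization by noting every connected element below $x$ lies below a component. Your treatment of the disconnectedness step is more explicit than the paper's, and your care in distinguishing mail-disconnectedness in $L^+$ from that in $\mathcal{C}$ is appropriate and matches what the paper needs but elides.
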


\begin{proof} Let $S\in\mathcal{D}(\mathcal{C})$ and $x\in L$. If $\bigvee S\leqslant x$ then every element of $S$ is below $x$ and hence below one of the elements of $\mathcal{C}(x)$, showing $S\leqslant \mathcal{C}(x)$. If $S\leqslant \mathcal{C}(x)$ then $\bigvee S\leqslant \bigvee \mathcal{C}(x)\leqslant x$. This proves that we indeed have the claimed Galois connection. That each $\mathcal{C}(x)$ is totally mail-disconnected in $L^+$ (and hence also in $\mathcal{C}$) is obvious from the definition of $\mathcal{C}(x)$. Note that every element of $\mathcal{C}(x)$ is below $x$. Since every connected element below $x$ is below one of the elements of $\mathcal{C}(x)$, if $x$ is a join of connected elements, then $x=\bigvee \mathcal{C}(x)$. Conversely, if  $x=\bigvee\mathcal{C}(x)$ then clearly $x$ is a join of connected elements.
\end{proof}

It turns out that the theorem above has a converse, and hence it provides a characterization of typical connectivity:

\begin{theorem}\label{thmK}
For any complete lattice $L$ and any subset $\mathcal{C}\subseteq L$, if the map $S\mapsto \bigvee S$ from the poset $\mathcal{D}(\mathcal{C})$ of totally mail-disconnected subsets of $\mathcal{C}$ to $L$ has a right Galois adjoint $x\mapsto C_\mathcal{C}(x)$, and moreover, each $C_\mathcal{C}(x)$ is a totally mail-disconnected set in $L^+$, then $0\notin \mathcal{C}$ and $\mathcal{C}$ is a typical connectivity in $L$ with $C_\mathcal{C}(x)=\mathcal{C}(x)$ for every $x\in L$. 
\end{theorem}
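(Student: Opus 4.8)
The plan is to establish the three conclusions in the order $0\notin\mathcal{C}$, then \ref{cond:CL1}, then $C_\mathcal{C}=\mathcal{C}(-)$. Throughout I would exploit the poset description of the hypothesized adjunction $\bigvee\dashv C_\mathcal{C}$: for every $x\in L$ one has $C_\mathcal{C}(x)=\max\{S\in\mathcal{D}(\mathcal{C})\mid\bigvee S\leqslant x\}$, together with the unit inequality $S\leqslant C_\mathcal{C}(\bigvee S)$ and the counit inequality $\bigvee C_\mathcal{C}(x)\leqslant x$. Recall also that every member of $\mathcal{D}(\mathcal{C})$ is by definition a subset of $\mathcal{C}$, and that each singleton $\{c\}$ with $c\in\mathcal{C}$ lies in $\mathcal{D}(\mathcal{C})$.

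For $0\notin\mathcal{C}$ I would argue by contradiction. If $0\in\mathcal{C}$, then $\{0\}\in\mathcal{D}(\mathcal{C})$, and among all $S$ with $\bigvee S\leqslant 0$ — a condition forcing every member of $S$ to equal $0$, so that $S$ is either $\varnothing$ or $\{0\}$ — the largest is $\{0\}$. Hence $C_\mathcal{C}(0)=\{0\}$, which is not contained in $L^+$ and therefore cannot be totally mail-disconnected in $L^+$, contradicting the hypothesis. This settles the first claim.

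For \ref{cond:CL1}, the heart of the argument, let $X\subseteq\mathcal{C}$ be non-empty with a lower bound $b\neq 0$, and set $m=\bigvee X$. For each $c\in X$ the singleton $\{c\}$ lies in $\mathcal{D}(\mathcal{C})$ with $\bigvee\{c\}=c\leqslant m$, so by the adjunction each $c$ lies below some member of $C_\mathcal{C}(m)$. The key step is that the common lower bound $b$ forces all of $X$ below a \emph{single} member of $C_\mathcal{C}(m)$: if $c\leqslant d$ and $c'\leqslant d'$ with $d,d'\in C_\mathcal{C}(m)$, then $b\leqslant d$ and $b\leqslant d'$, so $b$ is a nonzero common lower bound of $d$ and $d'$, whence $d=d'$ because $C_\mathcal{C}(m)$ is totally mail-disconnected in $L^+$. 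Writing $d$ for this common element, I get $m=\bigvee X\leqslant d$, while the counit gives $d\leqslant\bigvee C_\mathcal{C}(m)\leqslant m$; therefore $m=d\in C_\mathcal{C}(m)\subseteq\mathcal{C}$, proving \ref{cond:CL1}.

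Having established \ref{cond:CL1} and $0\notin\mathcal{C}$, the pair $(L,\mathcal{C})$ is a typical connectivity lattice, so Theorem~\ref{lemB} applies and exhibits $\mathcal{C}(-)$ as a right Galois adjoint of $\bigvee\colon\mathcal{D}(\mathcal{C})\to L$. Since the right adjoint of a monotone map between posets is unique — both $C_\mathcal{C}$ and $\mathcal{C}(-)$ equal $x\mapsto\max\{S\mid\bigvee S\leqslant x\}$ — I conclude $C_\mathcal{C}(x)=\mathcal{C}(x)$ for every $x$, finishing the proof. I expect the only genuine obstacle to be the \ref{cond:CL1} step, namely recognizing that the nonzero lower bound $b$ is precisely what collapses the members of $C_\mathcal{C}(m)$ above elements of $X$ into one; the $0\notin\mathcal{C}$ step is delicate only insofar as it relies on reading ``totally mail-disconnected in $L^+$'' as entailing containment in $L^+$, and the final identification is a formal consequence of uniqueness of adjoints.
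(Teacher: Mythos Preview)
Your proposal is correct and follows essentially the same approach as the paper's proof: first rule out $0\in\mathcal{C}$ by showing it would force $0\in C_\mathcal{C}(0)\subseteq L^+$, then verify \ref{cond:CL1} by using the nonzero lower bound of $X$ to collapse the members of $C_\mathcal{C}(\bigvee X)$ dominating elements of $X$ into a single element equal to $\bigvee X$, and finally invoke uniqueness of right adjoints. The only cosmetic difference is that for the first step you compute $C_\mathcal{C}(0)=\{0\}$ directly via the max-formula, whereas the paper argues $\{0\}\leqslant C_\mathcal{C}(0)$ and $\bigvee C_\mathcal{C}(0)\leqslant 0$ to conclude $0\in C_\mathcal{C}(0)$; both reach the same contradiction.
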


\begin{proof}
First, we show that $0\notin \mathcal{C}$. If $0\in \mathcal{C}$, then since $\{0\}\in\mathcal{D}(\mathcal{C})$ and $\bigvee \{0\}=0\leqslant 0$, we must have $\{0\}\leqslant C_\mathcal{C}(0)$, which, together with $\bigvee C_\mathcal{C}(0)\leqslant 0$, would force $0\in C_\mathcal{C}(0)$. However, $C_\mathcal{C}(0)\subseteq L^+$, hence a contradiction. Next, we show that $\mathcal{C}$ satisfies \ref{cond:CL1}. Let $X\subseteq\mathcal{C}$ be such that $X$ has a lower bound in $L^+$. For each $x\in X$ we have $\{x\}\in\mathcal{D}(\mathcal{C})$. Also, $\{x\}=\bigvee\{x\}\leqslant \bigvee X$ and hence $\{x\}\leqslant C_\mathcal{C}(\bigvee X)$. This means that $x\leqslant x'\in C_\mathcal{C}(\bigvee X)$ for some $x'$. Since $C_\mathcal{C}(\bigvee X)$ is a totally mail-disconnected set in $L^+$ and $X$ has a lower bound in $L^+$, we must have that $x'=y'$ for any two $x,y\in X$. But then, $\bigvee X\leqslant x'$. At the same time, $x'\leqslant \bigvee C_\mathcal{C}(\bigvee X)\leqslant \bigvee X$ and so $\bigvee X=x'\in \mathcal{C}$. Since the maps $C_\mathcal{C}(-)$ and $\mathcal{C}(-)$ are both right Galois adjoints to the same map, they must be equal.
\end{proof}

The two theorems above provide the following fundamental outlook on typical connectivity: \emph{a typical connectivity is a class of connected elements in a complete lattice that admits a well-behaved decomposition of elements into connected components}. Theorem~\ref{thmJ} above also shows that the concept of connection in the sense of J.~Serra, up to discarding $0$, arises as a special case of a typical connectivity where every element can be reconstructed back from its connected components (as their join): so the decomposition of elements into connected components is even better behaved than in the case of typical connectivity. 

\begin{definition}
A \emph{Serra connectivity} in a complete lattice $L$ is a typical connectivity $\mathcal{C}$ in $L$ satisfying \ref{cond:CL2}. The corresponding pair $(L,\mathcal{C})$ is called a Serra connectivity lattice.
\end{definition}

The precise relation between Serra connectivity and J.~Serra's concept of a connection is the following: A subset $\mathcal{C}$ of a complete lattice $L$ is a connection if and only if $0\in\mathcal{C}$ and $\mathcal{C}^+$ is a Serra connectivity in the sense of the definition above.

\section{General Connectivity Lattices}

Consider a complete lattice $L$ and a subset $\mathcal{C}\subseteq L$. For each $x\in L$, we write $\mathcal{C}(x)$ for the set of maximal elements in $\mathcal{C}\cap x^\downarrow$ and call its elements \emph{$\mathcal{C}$-components} of $x$. The join $\bigvee\mathcal{C}(x)$ in $L$ is called the \emph{kernel} of $x$. 

\begin{theorem}\label{thmM}
Let $L$ be a complete lattice. For a subset $\mathcal{C}\subseteq L$, the map $\mathcal{D}(\mathcal{C})\to L$ given by $S\mapsto\bigvee S$ has a right Galois adjoint, if and only if $\mathcal{C}$ equipped with the induced partial order is a subchainmail of $L$.
\end{theorem}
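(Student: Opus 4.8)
The plan is to exhibit the candidate right adjoint explicitly and verify the Galois equivalence, then obtain the converse by exploiting that any right adjoint necessarily lands in totally mail-disconnected sets. Write $f\colon\mathcal{D}(\mathcal{C})\to L$ for the monotone map $S\mapsto\bigvee S$ (monotonicity is immediate from the order on $\mathcal{D}(\mathcal{C})$). Recall that such an $f$ admits a right Galois adjoint precisely when, for each $x\in L$, the set $\{S\in\mathcal{D}(\mathcal{C})\mid\bigvee S\leqslant x\}$ has a largest element, which is then the value of the adjoint at $x$. So both implications reduce to understanding this set.

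For the forward direction, assume $\mathcal{C}$ is a subchainmail of $L$ and propose $x\mapsto\mathcal{C}(x)$ (the set of maximal elements of $\mathcal{C}\cap x^\downarrow$) as the adjoint. The key structural observation is that $\mathcal{C}\cap x^\downarrow$ is down-closed in $\mathcal{C}$ and is closed under joins of its own mails: each such mail is a mail in $\mathcal{C}$, so its join lies in $\mathcal{C}$ by the subchainmail hypothesis and Theorem~\ref{thmAA}, and it is bounded by $x$. Hence $\mathcal{C}\cap x^\downarrow$ is itself a chainmail. Applying Theorem~\ref{thmL} decomposes it into connected components, each with a largest element; these largest elements are exactly $\mathcal{C}(x)$, they form a totally mail-disconnected set (so $\mathcal{C}(x)\in\mathcal{D}(\mathcal{C})$), and every element of $\mathcal{C}\cap x^\downarrow$ lies below one of them. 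From here the equivalence $\bigvee S\leqslant x\iff S\leqslant\mathcal{C}(x)$ is routine: right-to-left uses that every element of $\mathcal{C}(x)$ is $\leqslant x$, and left-to-right uses that each $s\in S$ lies in $\mathcal{C}\cap x^\downarrow$ and hence below a component top.

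For the converse, assume $f$ has a right adjoint $g$ and let $M$ be a mail in $\mathcal{C}$ with a lower bound $b\in\mathcal{C}$; by Theorem~\ref{thmAA} it suffices to show $\bigvee M\in\mathcal{C}$. Put $x=\bigvee M$. Each singleton $\{m\}$ with $m\in M$ lies in $\mathcal{D}(\mathcal{C})$ with $\bigvee\{m\}=m\leqslant x$, so $\{m\}\leqslant g(x)$ and $m$ lies below some $c_m\in g(x)$. Since $g(x)$ is totally mail-disconnected in $\mathcal{C}$ while $b\in\mathcal{C}$ is a common lower bound of every pair $c_m,c_{m'}$, all the $c_m$ must equal a single element $c\in g(x)$. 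Then $x=\bigvee M\leqslant c\leqslant\bigvee g(x)\leqslant x$, forcing $x=c\in\mathcal{C}$.

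The main obstacle is the forward direction, specifically the verification that $\mathcal{C}\cap x^\downarrow$ is again a chainmail so that Theorem~\ref{thmL} produces well-defined components with tops; without this the naive candidate $\mathcal{C}(x)$ need be neither totally mail-disconnected nor such that every element of $\mathcal{C}\cap x^\downarrow$ sits below one of its maximal elements. A secondary care-point is the bottom of $\mathcal{D}(\mathcal{C})$: when $\mathcal{C}\cap x^\downarrow=\varnothing$ one needs the empty set to count as the (totally mail-disconnected) bottom of $\mathcal{D}(\mathcal{C})$, under which convention the adjunction then holds vacuously at such $x$.
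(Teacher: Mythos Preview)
Your proof is correct and follows essentially the same route as the paper's own argument: the paper explicitly says the `only if' part is the middle of the proof of Theorem~\ref{thmK} with $L^+$ replaced by $\mathcal{C}$ (exactly your mail argument pinning all $c_m$ to a single $c$), and for the `if' part it likewise observes that $\mathcal{C}\cap x^\downarrow$ is a down-closed subchainmail of $\mathcal{C}$, invokes Theorem~\ref{thmL} to obtain $\mathcal{C}(x)\in\mathcal{D}(\mathcal{C})$, and then verifies the adjunction as in Theorem~\ref{thmJ}. Your remark on $\varnothing$ is apt: the paper's formal definition restricts ``totally mail-disconnected'' to non-empty sets, but its subsequent usage (e.g.\ in Theorems~\ref{thmT} and~\ref{thmZ}) treats $\varnothing$ as the bottom of $\mathcal{D}(\mathcal{C})$, so you are right to read it that way.
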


\begin{proof}
The proof of this theorem uses arguments adapted from the proofs of Theorems~\ref{thmJ} and \ref{thmK}. In fact, the proof of the `only if' part is almost identical to the middle part of the proof of Theorem~\ref{thmK} --- just replace $L^+$ in it with $\mathcal{C}$. We prove the `if' part and the last part of the theorem simultaneously. Suppose $\mathcal{C}$ is a subchainmail of $L$. Then it is easy to see that $\mathcal{C}$ is a chainmail. The poset $\mathcal{C}\cap x^\downarrow$ is clearly a down-closed subchainmail of $\mathcal{C}$. Let $\mathcal{C}(x)$ be the set of maximal elements of this subchainmail. Then each $\mathcal{C}(x)$ is an element of $\mathcal{D}(\mathcal{C})$, by Theorem~\ref{thmL}. The rest of the proof is identical to the first few lines of the proof of Theorem~\ref{thmJ}.
\end{proof}

\begin{example}\label{exaN}
In general, even if $\mathcal{C}\subseteq L$ satisfies \ref{cond:CL2}, it may still be a chainmail without being a subchainmail of $L$. This is the case, for instance, in the following lattice,
        \[{\scalebox{0.8}{\begin{tikzpicture}[
        dot/.style={circle, draw=black, fill=white, minimum size=0em, inner sep=0.15em}, filleddot/.style={circle, draw=black, fill=black, minimum size=0em, inner sep=0.15em}, node distance=0.9cm]

        \node[filleddot] (1) {};
        \node[dot] (2) [above=of 1]{};
        \node[dot] (3) [above left=of 2]{};
        \node[dot] (4) [above right=of 2]{};
        \node[filleddot] (5) [above right=of 3]{};
        \node[dot] (6) [above=of 5]{};
        
        \draw[-] (1)--(2)--(3)--(5)--(6);
        \draw[-] (2)--(4)--(5);
    \end{tikzpicture}}}\]
where $\mathcal{C}$ is given by the set of hollow nodes.
\end{example}

\begin{example}\label{exaO}
Consider the lattice $L$ of all subsets of a unit square $[0,1]\times[0,1]$ and let $\mathcal{C}$ be the subset of $L$ consisting of all rectangular subsets, i.e., sets of the form $[a,b]\times[c,d]$ where $a,b,c,d\in[0,1]$ with $a\leqslant b$ and $c\leqslant d$. It is not difficult to see that $\mathcal{C}$ has all non-empty joins. Although $\mathcal{C}$ is not a subchainmail of $L$ (union of overlapping rectangles is not a rectangle), $\mathcal{C}$ is still a chainmail under the induced partial order. Furthermore, $\mathcal{C}$ satisfies \ref{cond:CL2}. 
\end{example}

\begin{definition} A \emph{preconnectivity lattice} is a pair $(L,\mathcal{C})$ where $L$ is a complete lattice and $\mathcal{C}$ is a subset of $L$ that is a chainmail under the induced partial order. Elements of $\mathcal{C}$ are called \emph{connected elements} of the preconnectivity lattice, while elements of $\mathcal{C}(x)$ are called \emph{connected components} of $x$. When $\mathcal{C}$ is a subchainmail of $L$, we call a preconnectivity lattice a \emph{connectivity} lattice; we will refer to the corresponding Galois connection of Theorem~\ref{thmM} as the \emph{connectivity adjunction}. The set $\mathcal{C}$ is a \emph{(pre)connectivity} in $L$ when $(L,\mathcal{C})$ is a \emph{(pre)connectivity} lattice.
\end{definition}

\begin{example}\label{exaV}
For a complete lattice $L$, any set $\mathcal{C}\subseteq L^+$ in which any two elements are mutually incomparable is a connectivity. It is easy to see that such connectivity is not, in general, a typical connectivity.
\end{example}

It is clear from the results of the previous section that each, a typical connectivity as well as a Serra connectivity, is a connectivity. Ahead, we will have further adjectives that we place in front of `connectivity', to describe special types of connectivities. When introducing these terms, we will automatically assume that whenever `type X connectivity' is defined as a set $\mathcal{C}$ of elements in a complete lattice $L$, the corresponding pair $(L,\mathcal{C})$ will be referred to as `type X connectivity lattice' (and vice versa).

Theorem~\ref{thmW} has the following analogue.

\begin{theorem}\label{thmV}
A subset $\mathcal{C}$ of a complete lattice $L$ is a connectivity if and only if for every element $x\in L$, each element of $\mathcal{C}\cap x^\downarrow$ is below a unique $\mathcal{C}$-component of $x$.
\end{theorem}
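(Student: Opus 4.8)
The plan is to prove both implications by unwinding the definition of a connectivity: by definition $(L,\mathcal{C})$ is a connectivity lattice precisely when $\mathcal{C}$, with the induced order, is a subchainmail of $L$, and by Theorem~\ref{thmAA} this is equivalent to $\mathcal{C}$ being closed in $L$ under joins of mails in $\mathcal{C}$. Throughout, recall that a $\mathcal{C}$-component of $x$ is a maximal element of $\mathcal{C}\cap x^\downarrow$. The single recurring fact I would isolate is that, since the $\mathcal{C}$-components of $x$ form a set of maximal elements, no two distinct $\mathcal{C}$-components can share a common lower bound inside $\mathcal{C}\cap x^\downarrow$ --- otherwise that pair would be a mail, contradicting total mail-disconnectedness of the set of maximal elements.

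For the forward implication, I would assume $\mathcal{C}$ is a connectivity and fix $x\in L$. First I would observe, exactly as in the proof of Theorem~\ref{thmM}, that $\mathcal{C}\cap x^\downarrow$ is a down-closed subchainmail of $\mathcal{C}$: it is down-closed because $\mathcal{C}$ is, and it is closed under joins of its mails because any such mail is a mail in $\mathcal{C}$ whose join stays below $x$. Hence $\mathcal{C}\cap x^\downarrow$ is itself a chainmail, and Theorem~\ref{thmL} applies: it decomposes into connected components, each possessing a largest element, these largest elements being exactly its maximal elements, i.e.\ the $\mathcal{C}$-components of $x$, and forming a totally mail-disconnected set. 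Any $c\in\mathcal{C}\cap x^\downarrow$ lies in a unique such component and is therefore below its largest element; and it cannot be below two distinct $\mathcal{C}$-components, for these would then have $c$ as a common lower bound. This yields both existence and uniqueness.

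For the converse, I would assume that for every $x$ each element of $\mathcal{C}\cap x^\downarrow$ lies below a unique $\mathcal{C}$-component of $x$. By Theorem~\ref{thmAA} it suffices to show that $\mathcal{C}$ is closed under joins of mails. Let $M\subseteq\mathcal{C}$ be a mail with common lower bound $b\in\mathcal{C}$, and put $x=\bigvee M$. Every $m\in M$ and also $b$ lie in $\mathcal{C}\cap x^\downarrow$. Let $p$ be the unique $\mathcal{C}$-component of $x$ above $b$; for each $m\in M$, if $q_m$ denotes the unique $\mathcal{C}$-component above $m$, then $b\leqslant m\leqslant q_m$ forces $q_m=p$ by the uniqueness of the component above $b$. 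Thus every element of $M$ lies below $p$, so $x=\bigvee M\leqslant p\leqslant x$, whence $x=p\in\mathcal{C}$. Therefore $\mathcal{C}$ is a subchainmail, i.e.\ a connectivity.

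I expect the only delicate point to be the reduction in the forward direction: one must confirm that $\mathcal{C}\cap x^\downarrow$ is a genuine chainmail before invoking Theorem~\ref{thmL}, but this is already carried out inside the proof of Theorem~\ref{thmM}, so it can be cited rather than redone. The remainder is a direct application of the component decomposition of Theorem~\ref{thmL} together with the common-lower-bound observation, which drives both the uniqueness in the forward direction and the closure argument in the converse.
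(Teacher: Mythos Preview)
Your proof is correct. The paper actually states Theorem~\ref{thmV} without proof (it is introduced simply as ``the following analogue'' of Theorem~\ref{thmW}), so there is nothing to compare against directly; your argument, built on Theorems~\ref{thmAA}, \ref{thmL} and the observation from the proof of Theorem~\ref{thmM} that $\mathcal{C}\cap x^\downarrow$ is a down-closed subchainmail of $\mathcal{C}$, is exactly the route the surrounding material is set up to support. One cosmetic slip: in the forward direction you write ``it is down-closed because $\mathcal{C}$ is,'' but the reason $\mathcal{C}\cap x^\downarrow$ is down-closed in $\mathcal{C}$ is that $x^\downarrow$ is down-closed in $L$, not anything about $\mathcal{C}$ itself.
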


Next, we analyze the condition \ref{cond:CL0} in the context of connectivity lattices. 

\begin{theorem}\label{thmT}
For a connectivity lattice $(L,\mathcal{C})$ the following conditions are equivalent:
\begin{itemize}
\item[(i)] $0\in\mathcal{C}$.

\item[(ii)] $\mathcal{C}$ is closed under all joins in $L$.

\item[(iii)] $\mathcal{C}(x)=\{\bigvee(\mathcal{C}\cap x^\downarrow)\}$.

\item[(iv)] Every element $x\in L$ has exactly one component.

\item[(v)] The kernel of every element $x\in L$ is connected.
\end{itemize}
When these conditions hold, $\mathcal{D}(\mathcal{C})=\{\{x\}\mid x\in \mathcal{C}\}\cup\{\varnothing\}$.
\end{theorem}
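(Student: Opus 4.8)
The plan is to establish the cycle of implications (i)$\Rightarrow$(ii)$\Rightarrow$(iii)$\Rightarrow$(iv)$\Rightarrow$(v)$\Rightarrow$(i), and then deduce the final description of $\mathcal{D}(\mathcal{C})$ from condition~(i). The single observation that drives almost every step is that the hypothesis $0\in\mathcal{C}$ makes $0$ a common lower bound, \emph{inside} $\mathcal{C}$, for any family of connected elements; this both turns arbitrary joins into mail joins and collapses total mail-disconnectedness to triviality.

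For (i)$\Rightarrow$(ii), I would take an arbitrary subset $X\subseteq\mathcal{C}$. If $X=\varnothing$ then $\bigvee X=0\in\mathcal{C}$ by hypothesis. If $X\neq\varnothing$, then $X\cup\{0\}$ is a mail in the poset $\mathcal{C}$ (it is non-empty and has the common lower bound $0\in\mathcal{C}$), so since $\mathcal{C}$ is a subchainmail of $L$, Theorem~\ref{thmAA} gives $\bigvee(X\cup\{0\})=\bigvee X\in\mathcal{C}$. Hence $\mathcal{C}$ is closed under all joins in $L$.

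The implications (ii)$\Rightarrow$(iii)$\Rightarrow$(iv)$\Rightarrow$(v) are short. Assuming (ii), note first that $\bigvee\varnothing=0\in\mathcal{C}$, so $0\in\mathcal{C}\cap x^\downarrow$ and this set is non-empty; moreover $\bigvee(\mathcal{C}\cap x^\downarrow)$ lies in $\mathcal{C}$ and is below $x$, so it is the \emph{largest} element of $\mathcal{C}\cap x^\downarrow$, whence $\mathcal{C}(x)=\{\bigvee(\mathcal{C}\cap x^\downarrow)\}$, which is (iii). Condition (iii) exhibits each $\mathcal{C}(x)$ as a singleton, giving (iv). And if $\mathcal{C}(x)=\{c\}$ then the kernel $\bigvee\mathcal{C}(x)=c\in\mathcal{C}$, giving (v).

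To close the cycle with (v)$\Rightarrow$(i), I would evaluate the kernel at the bottom element. Since $0^\downarrow=\{0\}$, the set $\mathcal{C}\cap 0^\downarrow$ equals $\{0\}$ if $0\in\mathcal{C}$ and $\varnothing$ otherwise; in the latter case $\mathcal{C}(0)=\varnothing$ and the kernel of $0$ is $\bigvee\varnothing=0$, which by (v) must be connected, i.e.\ $0\in\mathcal{C}$ --- a contradiction, forcing (i). Finally, for the description of $\mathcal{D}(\mathcal{C})$ under (i): any two distinct connected elements share the common lower bound $0\in\mathcal{C}$, so no subset of $\mathcal{C}$ containing two distinct elements is totally mail-disconnected; the totally mail-disconnected sets are therefore exactly the singletons $\{x\}$ with $x\in\mathcal{C}$ together with $\varnothing$, as claimed. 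None of the steps presents a serious obstacle; the only points requiring care are the degenerate cases ($X=\varnothing$ in (i)$\Rightarrow$(ii) and $\mathcal{C}(0)=\varnothing$ in (v)$\Rightarrow$(i)) and the correct invocation of the mail-join characterization of subchainmails from Theorem~\ref{thmAA}.
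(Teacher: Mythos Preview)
Your proof is correct and follows essentially the same cycle of implications as the paper's own proof, with the same key observation that $0\in\mathcal{C}$ turns every non-empty subset of $\mathcal{C}$ into a mail. The only cosmetic differences are that you invoke Theorem~\ref{thmAA} explicitly for (i)$\Rightarrow$(ii) and argue (ii)$\Rightarrow$(iii) directly from the definition of $\mathcal{C}(x)$ rather than citing Theorem~\ref{thmM}, and you phrase (v)$\Rightarrow$(i) as a contradiction where the paper simply notes that the kernel of $0$ is $0$.
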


\begin{proof}
(i)$\Rightarrow$(ii): Suppose $0\in\mathcal{C}$. Then any non-empty set in $\mathcal{C}$ is a mail and so $\mathcal{C}$ is closed in $L$ under all non-empty joins. Since $0\in\mathcal{C}$, we also have closure under the empty join. 

By Theorem~\ref{thmM}, each $\mathcal{C}(x)$ is the set of maximal elements in $\mathcal{C}\cap x^\downarrow$. It is the clear that (ii)$\Rightarrow$(iii).

(iii)$\Rightarrow$(iv) is obvious.

(iv)$\Rightarrow$(v): If $x$ has exactly one component, then that component will be the kernel of $x$.

(v)$\Rightarrow$(i), since the kernel of $0$ is $0$.

Clearly, when $0\in\mathcal{C}$, the only totally mail-disconnected sets in the chainmail $\mathcal{C}$ are the empty set and the singletons of connected elements.
\end{proof}

\begin{definition}
We call connectivity lattices $(L,\mathcal{C})$ where $\mathcal{C}$ satisfies \ref{cond:CL0}, \emph{kernel connectivity} lattices.
\end{definition}

As the following example shows, open sets in a topological space can in a sense be viewed as `connected sets'.

\begin{example}\label{exaI}
For an open-set topology $\tau$ on a set $X$, the pair $(\mathcal{P}(X),\tau)$ is a kernel connectivity lattice. In this case, the kernel of a subset of $X$ is its topological interior. Both \ref{cond:CL0} and \ref{cond:CL1} hold, but \ref{cond:CL2} fails in general.   
\end{example}

\begin{theorem}
For a connectivity lattice $(L,\mathcal{C})$, the condition \ref{cond:CL1} holds if and only if each $\mathcal{C}(x)\setminus\{0\}$ is totally mail-disconnected in $L^+$. Furthermore, \ref{cond:CL0} implies \ref{cond:CL1}.
\end{theorem}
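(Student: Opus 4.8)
The plan is to prove the biconditional by establishing each implication separately, relying on the characterization of components as maximal connected elements below $x$ (Theorem~\ref{thmM}) together with Theorem~\ref{thmV}, and then to dispatch the final clause by invoking Theorem~\ref{thmT}. The conceptual point to keep in mind throughout is that, for a connectivity lattice, each $\mathcal{C}(x)$ is \emph{automatically} totally mail-disconnected \emph{in $\mathcal{C}$} (this is built into Theorem~\ref{thmM} via Theorem~\ref{thmL}); the genuine content of the condition being analyzed is the stronger requirement of total mail-disconnectedness \emph{in $L^+$}, i.e.\ that no two distinct nonzero components share a common lower bound other than $0$.

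For the forward implication, I would assume \ref{cond:CL1} and fix $x\in L$. Take two distinct elements $c_1,c_2\in\mathcal{C}(x)\setminus\{0\}$ and suppose, toward a contradiction, that they have a common lower bound $b\in L^+$. Then $\{c_1,c_2\}$ is a non-empty subset of $\mathcal{C}$ with a lower bound $b\neq 0$, so \ref{cond:CL1} gives $c_1\vee c_2\in\mathcal{C}$. Since $c_1\vee c_2\leqslant x$, this is a connected element of $\mathcal{C}\cap x^\downarrow$ dominating the maximal elements $c_1$ and $c_2$, forcing $c_1=c_1\vee c_2=c_2$ — a contradiction. Hence $\mathcal{C}(x)\setminus\{0\}$ is totally mail-disconnected in $L^+$.

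For the reverse implication, I would assume each $\mathcal{C}(x)\setminus\{0\}$ is totally mail-disconnected in $L^+$ and let $X\subseteq\mathcal{C}$ be non-empty with a lower bound $b\neq 0$. Put $x=\bigvee X$. Each $z\in X$ lies in $\mathcal{C}\cap x^\downarrow$, so by Theorem~\ref{thmV} it is below a unique component $c_z\in\mathcal{C}(x)$; moreover $c_z\geqslant z\geqslant b\neq 0$, so $c_z\in\mathcal{C}(x)\setminus\{0\}$. For any $z,z'\in X$, the element $b$ is a common lower bound in $L^+$ of $c_z$ and $c_{z'}$, whence $c_z=c_{z'}$ by total mail-disconnectedness in $L^+$. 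Thus all of $X$ lies below a single component $c$, so $x=\bigvee X\leqslant c\leqslant x$, giving $\bigvee X=c\in\mathcal{C}$; this is exactly \ref{cond:CL1}.

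Finally, for the clause that \ref{cond:CL0} implies \ref{cond:CL1}, I would simply invoke Theorem~\ref{thmT}, whose equivalence (i)$\Leftrightarrow$(ii) shows that $0\in\mathcal{C}$ forces $\mathcal{C}$ to be closed under \emph{all} joins in $L$, which in particular yields \ref{cond:CL1}. I do not expect a serious obstacle here; the only thing requiring care is the bookkeeping around $0$ — specifically, ensuring in both directions that the components entering the argument genuinely lie in $L^+$ (which is precisely where the hypothesis $b\neq 0$ is used), so that the $L^+$-version of total mail-disconnectedness is the one actually being applied rather than the automatic $\mathcal{C}$-version.
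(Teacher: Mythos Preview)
Your argument is correct. Your route differs from the paper's, though the underlying mathematics is of course the same.

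The paper proceeds by a case split on whether $0\in\mathcal{C}$. When $0\notin\mathcal{C}$, it observes that $(L,\mathcal{C})$ is then a \emph{typical} connectivity lattice, so the forward direction is read off Theorem~\ref{thmJ} and the reverse from Theorems~\ref{thmK} and~\ref{thmM} as black boxes. When $0\in\mathcal{C}$, it invokes Theorem~\ref{thmT} to see that both sides of the biconditional hold automatically (each $\mathcal{C}(x)$ is a singleton, so $\mathcal{C}(x)\setminus\{0\}$ is trivially totally mail-disconnected in $L^+$; and $\mathcal{C}$ is closed under all joins, so \ref{cond:CL1} holds). Your argument, by contrast, is uniform: you never need to separate the two cases, because in the forward direction you work only with elements of $\mathcal{C}(x)\setminus\{0\}$, and in the reverse direction the hypothesis $b\neq 0$ already forces the relevant components $c_z$ into $\mathcal{C}(x)\setminus\{0\}$. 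What you have effectively done is unpack the internal arguments of Theorems~\ref{thmW} and~\ref{thmK} directly in this setting, rather than quoting Theorems~\ref{thmJ} and~\ref{thmK} and then patching the $0\in\mathcal{C}$ case separately. Your approach is slightly more self-contained and avoids the case split; the paper's approach better displays how the result sits as a corollary of the earlier structural theorems. Both are perfectly fine.
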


\begin{proof}
Suppose \ref{cond:CL1} holds. If $0\notin\mathcal{C}$,  then by Theorem~\ref{thmJ}, $\mathcal{C}(x)=\mathcal{C}(x)\setminus\{0\}$ is totally mail-disconnected in $L^+$. The converse is also true thanks to Theorems~\ref{thmK} and \ref{thmM}. If $0\in\mathcal{C}$, then by Theorem~\ref{thmT}, $\mathcal{C}(x)\setminus\{0\}$ is either empty, or a singleton subset of $L^+$. In both cases, it is totally mail-disconnected in $L^+$. At the same time, by Theorem~\ref{thmT} again, \ref{cond:CL1} clearly holds (so, in particular, \ref{cond:CL0} implies \ref{cond:CL1}).    
\end{proof}

Here is an example of a connectivity that is neither a kernel connectivity nor a  typical connectivity.

\begin{example}\label{exaJ}
    A connected subset in a graph is $k$-connected if it remains connected (and hence nonempty) after removing up to $k-1$ vertices. The powerset lattice of the vertices of the graph together with the set of $k$-connected sets is a connectivity lattice. This is analogous to the example of connected sets in a graph (Example~\ref{exaB}), but a mail is now a set of $k$-connected sets whose intersection contains a $k$-connected set $S$. If we remove up to $k-1$ vertices from the union of this mail, then it is still connected because the sets in the mail remain connected and the set $S$ remains nonempty and it therefore allows for paths between them. In the graph below, 
        $$\centering \vcenter{\scalebox{0.8}{\begin{tikzpicture}[
        dot/.style={circle, draw=black, fill=black, minimum size=0em, inner sep=0.15em}, node distance=0.9cm]
        
        \node[dot] (1) {};
        \node[dot] (3) [above right=of 1]{};
        \node[dot] (2) [below right=of 1]{};
        \node[dot] (4) [above right=of 2]{};
        \node[dot] (5) [above right=of 4]{};
        \node[dot] (6) [below right=of 4]{};
        \node[dot] (7) [above right=of 6]{};
        
        \draw[-] (1)--(2)--(4)--(3)--(1);
        \draw[-] (4)--(5)--(7)--(6)--(4);
    \end{tikzpicture}}}$$
    there are two $2$-connected sets: the two diamonds. Even though they have non-empty intersection, their union is not $2$-connected. So this gives an example of a connectivity lattice where none of \ref{cond:CL0} and \ref{cond:CL1} hold.
\end{example}

Below is another such example.

\begin{example}\label{exaM}
Let $X$ be a set and $Y$ its subset. Consider the power set $L$ of $X$. Let $\mathcal{C}$ be the set of those subsets $C$ of $X$ such that $C\setminus Y$ is a singleton. It is not difficult to see that the pair $(L,\mathcal{C})$ will be a connectivity lattice. \ref{cond:CL0} clearly fails in this case. As soon as $Y$ has at least one element and $X\setminus Y$ has at least two elements, \ref{cond:CL1} will not hold either. For a subset $A$ of $X$, its components are given by $$\mathcal{C}(A)=\{\{a\}\cup (A\cap Y)\mid a\in A\cap (X\setminus Y)\}.$$
\end{example}

\begin{theorem}\label{thmU} 
For a connectivity lattice $(L,\mathcal{C})$, the condition \ref{cond:CL2} holds if and only if every element $x\in L$ is the kernel of itself.
\end{theorem}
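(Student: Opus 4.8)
The plan is to unwind both sides into statements about the join $\bigvee\mathcal{C}(x)$. By definition, the kernel of $x$ is $\bigvee\mathcal{C}(x)$, where $\mathcal{C}(x)$ is the set of maximal elements of $\mathcal{C}\cap x^\downarrow$; so ``$x$ is the kernel of itself'' means exactly $x=\bigvee\mathcal{C}(x)$. This reduces the claimed equivalence to a direct analogue of the last assertion of Theorem~\ref{thmJ}, and the argument used there can be adapted, with Theorem~\ref{thmV} taking over the role played in the typical case by the observation that every connected element below $x$ sits below some $\mathcal{C}$-component of $x$.

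First I would dispatch the easy direction. Suppose every $x\in L$ is the kernel of itself, so that $x=\bigvee\mathcal{C}(x)$. Since $\mathcal{C}(x)\subseteq\mathcal{C}$, this already exhibits $x$ as a join of connected elements, which is precisely \ref{cond:CL2}.

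For the converse, assume \ref{cond:CL2} and fix $x\in L$. One inequality is immediate: each $\mathcal{C}$-component of $x$ lies in $\mathcal{C}\cap x^\downarrow$ and hence below $x$, so $\bigvee\mathcal{C}(x)\leqslant x$. For the reverse inequality, use \ref{cond:CL2} to write $x=\bigvee A$ with $A\subseteq\mathcal{C}$. Each $a\in A$ then belongs to $\mathcal{C}\cap x^\downarrow$, and since $(L,\mathcal{C})$ is a connectivity lattice, Theorem~\ref{thmV} guarantees that $a$ lies below some element of $\mathcal{C}(x)$. Consequently $x=\bigvee A\leqslant\bigvee\mathcal{C}(x)$, and combining the two inequalities yields $x=\bigvee\mathcal{C}(x)$, i.e.\ $x$ is the kernel of itself.

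The whole argument is short and I expect no serious obstacle. The single point that must be invoked with care is that in a connectivity lattice every connected element below $x$ is dominated by a $\mathcal{C}$-component of $x$; this is exactly what Theorem~\ref{thmV} supplies, and without the subchainmail hypothesis packaged into that theorem the domination could fail, so it is essential that we are working in a genuine connectivity lattice rather than merely a preconnectivity lattice.
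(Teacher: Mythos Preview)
Your proposal is correct and follows essentially the same route as the paper's own proof: both directions hinge on the observation that $\mathcal{C}(x)\subseteq\mathcal{C}\cap x^\downarrow$ for the easy implication, and on Theorem~\ref{thmV} to push each connected element below $x$ under some $\mathcal{C}$-component for the other. Your write-up is slightly more explicit about the two inequalities, but the argument is the same.
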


\begin{proof}
If \ref{cond:CL2} holds, then $x$ is the join of connected elements below $x$. Each of these connected elements is below one of the components (by Theorem~\ref{thmV}). This forces $x=\bigvee\mathcal{C}(x)$. Conversely, if $x=\bigvee\mathcal{C}(x)$ for each $x\in L$ then \ref{cond:CL2} holds since an element of $\mathcal{C}(x)$ is always connected.
\end{proof}

\begin{theorem}\label{thmS}
For a subset $\mathcal{C}$ of a complete lattice $L$, if either \ref{cond:CL0} or \ref{cond:CL2} holds, then the following also holds:
\begin{enumerate}[label=(CL\arabic*½)]
    \item \label{cond:CL1.5} For every element $a\in L$, if $a\neq 0$ then there exists $c\in\mathcal{C}$ such that  $c\leqslant a$.
\end{enumerate} 
If $\mathcal{C}$ is a connectivity and \ref{cond:CL1.5} holds, then \ref{cond:CL1} holds as well. Furthermore, $\mathcal{C}=L$ if and only if: $\mathcal{C}$ is a connectivity and \ref{cond:CL0}, \ref{cond:CL2} hold.
\end{theorem}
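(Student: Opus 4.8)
The plan is to establish the three assertions in turn, the guiding observation being that \ref{cond:CL1.5} is precisely the device that upgrades a lower bound existing in $L$ to a lower bound lying inside $\mathcal{C}$, thereby turning an arbitrary bounded family of connected elements into a genuine mail of the subposet $\mathcal{C}$. For the first assertion I would argue by cases. If \ref{cond:CL0} holds, then $0\in\mathcal{C}$ and $0\leqslant a$ for every $a$, so $c=0$ witnesses \ref{cond:CL1.5}. If instead \ref{cond:CL2} holds, write an arbitrary $a\neq 0$ as $a=\bigvee Z$ with $Z\subseteq\mathcal{C}$; since the empty join is $0$, the set $Z$ is non-empty, and any $c\in Z$ satisfies $c\in\mathcal{C}$ and $c\leqslant\bigvee Z=a$, again giving \ref{cond:CL1.5}. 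This part is routine.

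The second assertion is the heart of the statement. Assume $\mathcal{C}$ is a connectivity, i.e.\ a subchainmail of $L$, and that \ref{cond:CL1.5} holds; I want to deduce \ref{cond:CL1}. So let $X\subseteq\mathcal{C}$ be non-empty with a lower bound $b\neq 0$ in $L$. Applying \ref{cond:CL1.5} to $b$ produces some $c\in\mathcal{C}$ with $c\leqslant b$, whence $c\leqslant b\leqslant x$ for every $x\in X$. Thus $c$ is a common lower bound of $X$ lying in $\mathcal{C}$, so $X$ is a mail of the poset $\mathcal{C}$. Since $\mathcal{C}$ is a subchainmail of the chainmail $L$, it is closed in $L$ under joins of its mails by Theorem~\ref{thmAA}, so $\bigvee X\in\mathcal{C}$, which is exactly \ref{cond:CL1}.

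For the final equivalence, the forward direction is immediate: if $\mathcal{C}=L$ then $0\in\mathcal{C}$ gives \ref{cond:CL0}, each $a$ equals the join $\bigvee\{a\}$ of the connected element $a$ giving \ref{cond:CL2}, and $L$ is trivially closed under all joins, hence a subchainmail of itself and thus a connectivity. For the converse, suppose $\mathcal{C}$ is a connectivity satisfying \ref{cond:CL0} and \ref{cond:CL2}. From $0\in\mathcal{C}$ and the equivalence (i)$\Leftrightarrow$(ii) of Theorem~\ref{thmT}, $\mathcal{C}$ is closed under all joins in $L$. Now, given any $a\in L$, condition \ref{cond:CL2} writes $a=\bigvee Z$ with $Z\subseteq\mathcal{C}$, and closure under all joins forces $a=\bigvee Z\in\mathcal{C}$; hence $\mathcal{C}=L$.

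I expect the only genuine subtlety to lie in the second assertion: one must verify that $c$ can legitimately serve as a lower bound \emph{within} $\mathcal{C}$, and that \ref{cond:CL1.5} is exactly strong enough to supply it, so that the subchainmail closure of Theorem~\ref{thmAA} applies verbatim. Everything else reduces to bookkeeping with the earlier results, principally Theorem~\ref{thmT}.
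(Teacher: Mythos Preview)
Your proof is correct and follows essentially the same approach as the paper's own proof: the paper likewise dismisses the first assertion as ``close to obvious'', handles the second by observing that \ref{cond:CL1.5} turns a mail in $L^+$ into a mail in $\mathcal{C}$ so that subchainmail closure applies, and derives the final equivalence from Theorems~\ref{thmT} and~\ref{thmU}. Your argument for the converse in the last part uses only Theorem~\ref{thmT} (closure under all joins plus \ref{cond:CL2} directly), which is a slightly more streamlined route than the paper's appeal to both \ref{thmT} and \ref{thmU}, but the substance is the same.
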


\begin{proof}
That \ref{cond:CL0} and \ref{cond:CL2} each imply \ref{cond:CL1.5} is close to obvious. Under \ref{cond:CL1.5}, a set $X\subseteq\mathcal{C}$ is a mail in $\mathcal{C}$ provided it is a mail in $L^+$. This easily gives that if $\mathcal{C}$ is a connectivity then \ref{cond:CL1} holds. If $\mathcal{C}=L$ then it is clear that $\mathcal{C}$ is a connectivity and \ref{cond:CL0}, \ref{cond:CL2} hold. The converse statement is a consequence of Theorems~\ref{thmT} and \ref{thmU}.
\end{proof}

\begin{definition}
We call a connectivity lattice $(L,\mathcal{C})$ \emph{well-founded} when \ref{cond:CL1.5} holds and \emph{degenerate} when $\mathcal{C}=L$. We call $(L,\mathcal{C})$ a \emph{saturated connectivity lattice} when \ref{cond:CL2} holds.
\end{definition}

Theorem~\ref{thmS} gives the following.

\begin{corollary}\label{thmR}
A well-founded connectivity is a typical connectivity if and only if it is not a kernel connectivity. Consequently, a connectivity is a Serra connectivity if and only if it is not a kernel connectivity and is saturated.
\end{corollary}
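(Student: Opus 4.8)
The plan is to treat the corollary as a pure bookkeeping consequence of Theorem~\ref{thmS}, by first translating the five named classes into conditions among \ref{cond:CL0}, \ref{cond:CL1}, \ref{cond:CL2} and \ref{cond:CL1.5}. For a connectivity $\mathcal{C}$ (i.e.\ a subchainmail of $L$) the relevant dictionary is: it is \emph{typical} exactly when \ref{cond:CL1} holds but \ref{cond:CL0} fails; it is a \emph{kernel} connectivity exactly when \ref{cond:CL0} holds; it is \emph{saturated} exactly when \ref{cond:CL2} holds; it is \emph{well-founded} exactly when \ref{cond:CL1.5} holds; and it is a \emph{Serra} connectivity exactly when it is typical and \ref{cond:CL2} holds. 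The whole proof is then the observation that, on top of Theorem~\ref{thmS}, these conditions collapse the distinctions in the expected way.

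For the first equivalence I would start with a well-founded connectivity $(L,\mathcal{C})$, so $\mathcal{C}$ is a connectivity and \ref{cond:CL1.5} holds. The key input is the second clause of Theorem~\ref{thmS}: a connectivity satisfying \ref{cond:CL1.5} automatically satisfies \ref{cond:CL1}. Hence, for a well-founded connectivity, \ref{cond:CL1} is free, and being typical reduces precisely to the failure of \ref{cond:CL0}. Since failure of \ref{cond:CL0} is exactly the statement that $(L,\mathcal{C})$ is not a kernel connectivity, both directions of ``typical $\iff$ not kernel'' follow at once, with no further work.

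For the second (``consequently'') equivalence I would argue both directions by unwinding definitions, now leaning on the first equivalence. The forward direction is immediate: a Serra connectivity is typical and saturated, and typical forces the failure of \ref{cond:CL0}, i.e.\ it is not a kernel connectivity. For the converse, suppose $\mathcal{C}$ is a connectivity that is saturated and not a kernel connectivity. Since \ref{cond:CL2} holds, the first clause of Theorem~\ref{thmS} gives \ref{cond:CL1.5}, so $\mathcal{C}$ is well-founded; being additionally not a kernel connectivity, the first part of the corollary shows it is typical; combined with \ref{cond:CL2} this is exactly a Serra connectivity.

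I do not expect any genuine obstacle here, since every step is a direct appeal to Theorem~\ref{thmS} or to a definition; the only thing requiring care is keeping the implications among \ref{cond:CL0}, \ref{cond:CL1}, \ref{cond:CL2} and \ref{cond:CL1.5} straight, and in particular remembering that \ref{cond:CL1} is never assumed directly but is always \emph{derived} (from \ref{cond:CL1.5} for a connectivity) so that ``typical'' can be cleanly reduced to the negation of \ref{cond:CL0}.
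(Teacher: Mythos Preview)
Your proposal is correct and is exactly the unpacking the paper intends: the paper states this as a corollary with the one-line justification ``Theorem~\ref{thmS} gives the following,'' and your argument is precisely the bookkeeping that makes that implication explicit. There is nothing to add or change.
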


\begin{example}\label{exaY}
For an arbitrary well-founded Serra connectivity $\mathcal{C}$ in a complete lattice $L$, the set $\mathcal{C}$ remains a typical well-founded connectivity in the extension $L\sqcup \{\infty\}$ with a new top element $\infty$. However, it is no longer a Serra connectivity.
\end{example}

By Theorem~\ref{thmU}, saturated connectivity is one for which the right adjoint in the connectivity adjunction is a right inverse of the left adjoint. Next, we examine the symmetric condition: when the right adjoint is a left inverse of the left adjoint. 

\begin{definition}
A connectivity lattice $(L,\mathcal{C})$ is said to be \emph{separated} when the following holds: 
\begin{itemize}
\item[(CL3)] For any totally mail-disconnected set $X$ in $\mathcal{C}$ we have $X=\mathcal{C}(\bigvee X)$.
\end{itemize}
\end{definition}
\begin{definition}
An \emph{absolute connectivity lattice} is a connectivity lattice in which the connectivity adjunction is an isomorphism.
\end{definition}

\begin{theorem}\label{thmQ}
A separated connectivity lattice is never a kernel connectivity lattice. Consequently: 
\begin{itemize}
\item[(a)] A well-founded separated connectivity lattice is a typical connectivity lattice.

\item[(b)] A separated Serra connectivity lattice is the same as an absolute connectivity lattice.  
\end{itemize}
\end{theorem}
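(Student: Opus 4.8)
The plan is to prove the headline claim first --- that separatedness forbids $0\in\mathcal{C}$ --- and then read off (a) and (b) from the machinery already assembled in this section.

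For the main statement I would argue by contradiction. Suppose $(L,\mathcal{C})$ is separated and also a kernel connectivity lattice, so that $0\in\mathcal{C}$. The empty set is the bottom element of $\mathcal{D}(\mathcal{C})$, hence a totally mail-disconnected set in $\mathcal{C}$, and $\bigvee\varnothing=0$. Condition (CL3) applied to $X=\varnothing$ then forces $\varnothing=\mathcal{C}(0)$. But $\mathcal{C}(0)$ is the set of maximal elements of $\mathcal{C}\cap 0^\downarrow=\{0\}$, so $\mathcal{C}(0)=\{0\}$, contradicting $\varnothing=\mathcal{C}(0)$. This rules out $0\in\mathcal{C}$, so a separated connectivity lattice is never a kernel connectivity lattice.

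For (a): a well-founded separated connectivity lattice is, by the statement just proved, not a kernel connectivity lattice; since it is well-founded, Corollary~\ref{thmR} immediately gives that it is a typical connectivity lattice.

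For (b), the key is to recast both \emph{separated} and \emph{saturated} as the two round-trips of the connectivity adjunction $\bigvee\dashv\mathcal{C}(-)$ between $\mathcal{D}(\mathcal{C})$ and $L$. By definition, separatedness (CL3) says $\mathcal{C}(\bigvee X)=X$ for every totally mail-disconnected $X$, i.e.\ the composite $\mathcal{D}(\mathcal{C})\to L\to\mathcal{D}(\mathcal{C})$ is the identity; and by Theorem~\ref{thmU}, saturatedness (\ref{cond:CL2}) says $\bigvee\mathcal{C}(x)=x$ for every $x$, i.e.\ the composite $L\to\mathcal{D}(\mathcal{C})\to L$ is the identity. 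A Galois connection between posets is an order isomorphism exactly when both round-trips are identities, so a connectivity lattice is absolute if and only if it is simultaneously separated and saturated. Given this, a separated Serra connectivity lattice is separated and, being Serra, saturated, hence absolute. Conversely, an absolute connectivity lattice is separated and saturated; saturatedness implies \ref{cond:CL1.5} by Theorem~\ref{thmS}, so it is well-founded, whence part (a) makes it typical, and a typical saturated connectivity is by definition a Serra connectivity --- so it is a separated Serra connectivity lattice. The two implications together give the asserted identification.

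The main obstacle is not the headline claim (a one-line use of the empty set as the bottom of $\mathcal{D}(\mathcal{C})$) but the bookkeeping in (b): one must verify that \emph{absolute} decomposes cleanly as \emph{separated and saturated} via the two-identities characterization of an isomorphic Galois connection, and then feed saturatedness through Theorem~\ref{thmS} and part (a) to recover typicality in the converse direction.
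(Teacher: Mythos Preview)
Your argument is correct and follows the same line as the paper. The headline claim via the empty set in $\mathcal{D}(\mathcal{C})$ and part (a) via Corollary~\ref{thmR} match the paper exactly; for (b) the paper simply says ``we also easily get (b)'', and your unpacking of \emph{absolute} as \emph{separated $+$ saturated} together with the route saturated $\Rightarrow$ well-founded $\Rightarrow$ typical (via Theorem~\ref{thmS} and part (a)) is precisely the intended content of that phrase.
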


\begin{proof}
In a kernel connectivity lattice $(L,\mathcal{C})$, the Galois closure of $\varnothing\in\mathcal{D}(\mathcal{C})$ is given by $\{0\}$, and so $(L,\mathcal{C})$ cannot be separated. Then, (a) follows from Corollary~\ref{thmR} and we also easily get (b).  
\end{proof}

\begin{example}\label{exaL}
The connectivity lattice $(L,\mathcal{C})$ from Example~\ref{exaC}, where $L$ is the powerset of the underlying set of a topological space and $\mathcal{C}$ is the set of connected sets in the space, while being a Serra connectivity lattice, is not separated. This is because the union of two disjoint connected sets may be connected. In fact, any connected set is the union of singletons, which are disjoint and connected. We can then see that separatedness of this connectivity lattice is equivalent to the space being totally disconnected.
\end{example}

\begin{example}\label{exaK} Let us consider a slight modification to Example~\ref{exaG}. Consider the lattice $L$ of all down-closed subsets of a poset $P$ of the form described in Example~\ref{exaG}. Let $\mathcal{C}$ be the set of all principal down-closed sets, i.e., sets of the form $x^\downarrow$ where $x\in P$. Then the pair $(L,\mathcal{C})$ is a separated Serra connectivity lattice.
\end{example}

\begin{example}\label{exaW}
In Example~\ref{exaV} (where $\mathcal{C}$ is given by an anti-chain), the set $\mathcal{C}$ is a separated connectivity if and only if there is no $x\in \mathcal{C}$ which is below the join of any $X\subseteq \mathcal{C}\setminus{x}$. For instance, such is any set $\mathcal{C}$ of non-empty subsets of a set $A$ (where the lattice $L$ is the powerset of $A$), as soon as each element of $\mathcal{C}$ contains an element not contained in any of the other elements of $\mathcal{C}$. For example, in the power set of $A=\{1,2,3\}$, the set $\mathcal{C}=\{\{1,2\},\{2,3\}\}$ is a separated connectivity. This separated connectivity is not a typical connectivity.
\end{example}

\begin{example}\label{exaX}
In Example~\ref{exaV} again, the set $\mathcal{C}$ is a well-founded connectivity if and only if every element different from $0$ has an element of $\mathcal{C}$ below it. This implies that the elements of $\mathcal{C}$ are exactly the atoms. An example of such connectivity is given by the three atoms in the following complete lattice:
\[\scalebox{0.8}{\begin{tikzpicture}[
        dot/.style={circle, draw=black, fill=black, minimum size=0em, inner sep=0.15em}, node distance=0.9cm]
    
    \node[dot] (1) {};
    \node[dot] (3) [above=of 1]{};
    \node[dot] (2) [left=of 3]{};
    \node[dot] (4) [right=of 3]{};
    \node[dot] (5) [above=of 3]{}; 
    [above=of 4]{};
    \node[dot] (7) [above=of 5]{};
    
    \draw[-] (1)--(2)--(5)--(7);
    \draw[-] (1)--(3)--(5);
    \draw[-] (4)--(5);
    \draw[-] (4)--(1);
    \end{tikzpicture}}\]
Since this is not a kernel connectivity, it is a typical connectivity. However, it is neither separated nor saturated.
\end{example}

\begin{example}\label{exaZ}
In Example~\ref{exaY}, if $\mathcal{C}$ were in addition a separated connectivity in the original complete lattice $L$, then it will remain such in the extension $L\sqcup \{\infty\}$ (the converse is also true). So a well-founded separated connectivity need not be saturated.
\end{example}

There is a general process behind Example~\ref{exaK}. Let $L$ be a complete lattice. For a subset $\mathcal{C}\subseteq L$, consider the set $\Sigma\mathcal{C}$ of all joins of subsets of $\mathcal{C}$ in $L$. Then it is not difficult to see that: 
\begin{itemize}
\item $(L,\mathcal{C})$ is a (separated) connectivity lattice if and only if $(\Sigma\mathcal{C},\mathcal{C})$ is a (separated) connectivity lattice.

\item $(L,\mathcal{C})$ is a kernel connectivity lattice if and only if $(\Sigma\mathcal{C},\mathcal{C})$ is a kernel connectivity lattice (in this case, $\Sigma\mathcal{C}=\mathcal{C}$).

\item \ref{cond:CL2} and hence \ref{cond:CL1.5} and \ref{cond:CL1} will always hold for $(\Sigma\mathcal{C},\mathcal{C})$ and so, the pair $(L,\mathcal{C})$ is a connectivity lattice with $0\neq\mathcal{C}$ if and only if $(\Sigma\mathcal{C},\mathcal{C})$ is a Serra connectivity lattice. 
\end{itemize}

\section{Absolute Connectivity Lattices}

Given a complete lattice $L$, consider the following conditions on an element $a$ in $L$.

\begin{enumerate}[label=(E\arabic*)]
    \item \label{cond:E1} $a\neq 0$ and for any $x,y$, if $a\leqslant x\vee y$ and $x\wedge y=0$, then $a\leqslant x$ or $a\leqslant y$.
    \item \label{cond:E2} $a\neq 0$ and for any $x,y$, we have:
    \[\big[x\wedge y=0\text{ and }a=x\vee y\big]\implies \big[x=a\text{ or } y=a\big].\]
    \item \label{cond:E3} for any totally mail-disconnected subset $S$ of $L^+$, if $a=\bigvee S$ then $a\in S$ (and consequently, $S=\{a\}$).
    \item \label{cond:E4} for any totally mail-disconnected subset $S$ of $L^+$, if $a\leqslant \bigvee S$ then $a\leqslant s$ for some $s\in S$.
\end{enumerate}

Note that if in \ref{cond:E1} and \ref{cond:E2} we drop `$x\wedge y=0$', then we get the conditions defining join-prime and join-irreducible elements, respectively (up to allowing $a=0$ as in, e.g., \cite{Balachandran1955}). Similarly, if in \ref{cond:E3} and \ref{cond:E4} we drop the requirement on $S$ to be a totally mail-disconnected subset of $L^+$, then we get the conditions defining completely join-irreducible and completely join-prime elements, respectively (see again \cite{Balachandran1955}). 

It is easy to see that we have the following implications:
$$\xymatrix@!=10pt{ & \textrm{(E2)} & \\ \textrm{(E1)}\ar@{=>}[ur] & & \textrm{(E3)}\ar@{=>}[ul] \\ & \textrm{(E4)}\ar@{=>}[ul]\ar@{=>}[ur] & }$$
Note that $0=\bigvee\varnothing$, where $\varnothing$ is a totally mail-disconnected subset of $L^+$. So $0$ cannot satisfy \ref{cond:E3} or \ref{cond:E4}. We say that an element $a$ in $L$ is \emph{absolutely connected} when \ref{cond:E4} holds, or equivalently, all of \hyperref[cond:E1]{(E1-4)} hold. It turns out that in a frame, these conditions are always equivalent.

\begin{theorem}\label{thmY}
An element $a$ in a frame is absolutely connected if and only if it satisfies any one of \hyperref[cond:E1]{(E1-4)}.   
\end{theorem}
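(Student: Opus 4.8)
The plan is to exploit the implications already recorded before the statement, namely (E4)$\Rightarrow$(E1)$\Rightarrow$(E2) and (E4)$\Rightarrow$(E3)$\Rightarrow$(E2), all of which hold in an arbitrary complete lattice. Since (E2) is the common consequence of the other three, to collapse the four conditions into a single equivalence class in a frame it suffices to establish the one reverse implication (E2)$\Rightarrow$(E4) using the frame law. I would split this into two steps: first (E2)$\Rightarrow$(E1), which needs only finite distributivity, and then (E1)$\Rightarrow$(E4), which is where the infinitary distributive law of the frame does the real work.

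For (E2)$\Rightarrow$(E1), suppose $a\leqslant x\vee y$ with $x\wedge y=0$. I would cut $x$ and $y$ down to $a$ by setting $x'=a\wedge x$ and $y'=a\wedge y$. Finite distributivity gives $x'\vee y'=a\wedge(x\vee y)=a$ and $x'\wedge y'=a\wedge(x\wedge y)=0$, so (E2) forces $x'=a$ or $y'=a$, that is, $a\leqslant x$ or $a\leqslant y$. This step is routine.

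The substance is (E1)$\Rightarrow$(E4), the passage from the binary disjoint case to an arbitrary totally mail-disconnected family $S\subseteq L^+$ (equivalently, a family of pairwise disjoint nonzero elements). Assuming $a\leqslant\bigvee S$, I would use the frame law to write $a=a\wedge\bigvee S=\bigvee_{s\in S}(a\wedge s)$; since $a\neq 0$, some $a\wedge s_0\neq 0$. I then split off this single element: because $S$ is pairwise disjoint, $s_0\wedge\bigvee_{s\neq s_0}s=\bigvee_{s\neq s_0}(s_0\wedge s)=0$, while $s_0\vee\bigvee_{s\neq s_0}s=\bigvee S\geqslant a$. Applying (E1) to this disjoint pair yields $a\leqslant s_0$ or $a\leqslant\bigvee_{s\neq s_0}s$; the second alternative is impossible, since it would give $a\wedge s_0\leqslant s_0\wedge\bigvee_{s\neq s_0}s=0$, contradicting the choice of $s_0$. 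Hence $a\leqslant s_0$ with $s_0\in S$, which is exactly (E4).

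The main obstacle is precisely this last jump from a two-fold to an arbitrary disjoint join, where naive induction fails for infinite $S$. The idea that makes it painless is that the frame law lets me locate a single $s_0$ that genuinely overlaps $a$ and then treat the entire remaining join $\bigvee_{s\neq s_0}s$ as a \emph{single} element disjoint from $s_0$, reducing the infinite situation to one instance of the binary condition (E1). Stringing the two steps together gives (E2)$\Rightarrow$(E1)$\Rightarrow$(E4), and together with the previously noted implications this shows that in a frame each of (E1)--(E4) implies all the others, proving the theorem.
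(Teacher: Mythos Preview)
Your proof is correct and follows essentially the same approach as the paper: use the frame law to write $a=\bigvee_{s\in S}(a\wedge s)$, locate a single $s_0$ with $a\wedge s_0\neq 0$, and then collapse the remaining join into one element disjoint from $s_0$ to reduce to a single binary application. The only cosmetic difference is that the paper applies (E2) directly to the decomposition $a=(a\wedge s_0)\vee\bigvee_{s\neq s_0}(a\wedge s)$ inside $a$, whereas you first pass through (E1) and then apply it to the pair $s_0,\ \bigvee_{s\neq s_0}s$; your extra step (E2)$\Rightarrow$(E1) is correct but not needed if one works with the meets $a\wedge s$ from the start.
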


\begin{proof} It suffices to show that \ref{cond:E2}$\Rightarrow$\ref{cond:E4}.  Assuming \ref{cond:E2}, suppose $a\leqslant \bigvee S$ where $S$ is totally mail-disconnected in $L^+$, i.e. $s_1\wedge s_2=0$ for any $s_1,s_2\in S$. Then, $$a=a\wedge\bigvee S=\bigvee\{a\wedge s\mid s\in S\}.$$
Since $a\neq 0$ by \ref{cond:E2}, there exists $b\in S$ such that $a\wedge b\neq 0$. We have:
\[(a\wedge b)\wedge \bigvee \{a\wedge s\mid s\in S\setminus\{b\}\}=\bigvee\{a\wedge b\wedge s\mid s\in S\setminus\{b\}\}=0\]
At the same time, we have:
\[a=(a\wedge b)\vee \bigvee \{a\wedge s\mid s\in S\setminus\{b\}\}.\]
Since $a\wedge b\neq 0$, by \ref{cond:E2} we have:
$$\bigvee \{a\wedge s\mid s\in S\setminus\{b\}\}=0.$$
Then $a=a\wedge b$ and so $a\leqslant b$.
\end{proof}

In general, however, the conditions \hyperref[cond:E1]{(E1-4)} need not be equivalent, as the following examples show.

\begin{example}\label{exaE}
In the complete lattice of closed sets of a topological space where every singleton is closed (e.g., a Hausdorff space), a closed set $a$ satisfies \ref{cond:E1} if and only if it satisfies \ref{cond:E2} and if and only if it is a connected closed set; $a$ satisfies \ref{cond:E3} if and only if it is a singleton. It satisfies \ref{cond:E4} if and only if it is a clopen singleton.
\end{example}

\begin{example}\label{exaF}
    For the lattices $M_3$ and $N_5$ (Figure \ref{fig:e1_e2_different}), in each case we have that the element $a$ satisfies \ref{cond:E2} but not \ref{cond:E1}.
\end{example}

\begin{figure}[h]
    \centering
    \hfill\scalebox{0.8}{\begin{tikzpicture}[
dot/.style={circle, draw=black, fill=black, minimum size=0em, inner sep=0.15em},scale=0.6]

\pgfpointtransformed{\pgfpointxy{1}{1}};
\pgfgetlastxy{\vx}{\vy}

\begin{scope}[node distance=\vy and \vx]
\node[dot] (0) {};
\node[dot] (1) [above left=of 0]{};
\node[dot] (2) [above right=of 0]{};
\node[dot,label=right:$a$] (3) [above=of 2]{};
\node[dot] (4) [above left=of 3]{};
\end{scope}

\draw[-] (0)--(1);
\draw[-] (0)--(2);
\draw[-] (2)--(3);
\draw[-] (3)--(4);
\draw[-] (1)--(4);
\end{tikzpicture}\hspace{3cm}
\begin{tikzpicture}[
dot/.style={circle, draw=black, fill=black, minimum size=0em, inner sep=0.15em},scale=0.8]

\pgfpointtransformed{\pgfpointxy{1}{1}};
\pgfgetlastxy{\vx}{\vy}

\begin{scope}[node distance=\vy and \vx]
\node[dot] (0) {};
\node[dot] (1) [above=of 0]{};
\node[dot] (2) [above=of 1]{};
\node[dot] (3) [left=of 1]{};
\node[dot,label=right:$a$] (4) [right=of 1]{};
\end{scope}

\draw[-] (0)--(1);
\draw[-] (1)--(2);
\draw[-] (3)--(2);
\draw[-] (4)--(2);
\draw[-] (0)--(3);
\draw[-] (0)--(4);
\end{tikzpicture}}\hfill
    \caption{Lattices showing the distinction between the properties \ref{cond:E1} and \ref{cond:E2}}
    \label{fig:e1_e2_different}
\end{figure}
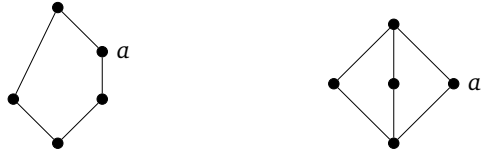

\begin{example} When we take the product of two complete lattices $L$ and $M$, we can embed every $l\in L$ as $(l,0)$, and every $m\in M$ as $(0,m)$. We will then have for each of \hyperref[cond:E1]{(E1-4)} that $(l,0)$ satisfies (E$i$) in $L\times M$ if and only if $l$ satisfies (E$i$) in $L$, and the same for $(0,m)$ in $L\times M$ and $m$ in $M$. This means that the product of the lattice of closed sets of a non-discrete Hausdorff space (Example~\ref{exaE}) and of $M_3$ or $N_5$ (Example~\ref{exaF}) gives a lattice in which \hyperref[cond:E1]{(E1-4)} are all different.
\end{example}

\begin{theorem}\label{thmO}
Let $\mathcal{C}$ be the set of absolutely connected elements in a complete lattice $L$. Then $(L,\mathcal{C})$ is a typical connectivity lattice where Galois closed elements of $\mathcal{D}(\mathcal{C})$ are precisely those totally mail-disconnected sets in $\mathcal{C}$ which are also totally mail-disconnected in $L^+$. Consequently, whenever $(L,\mathcal{C})$ is well-founded, it is separated. 
\end{theorem}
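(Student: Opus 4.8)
The plan is to verify the three assertions in turn: that $(L,\mathcal{C})$ is a typical connectivity lattice, then the characterization of the Galois closed elements of $\mathcal{D}(\mathcal{C})$, and finally the separatedness consequence. First I would record that $0\notin\mathcal{C}$: since $0=\bigvee\varnothing$ with $\varnothing$ totally mail-disconnected in $L^+$, no element satisfying \ref{cond:E4} can equal $0$, so \ref{cond:CL0} fails. To establish \ref{cond:CL1}, take a non-empty $X\subseteq\mathcal{C}$ with a lower bound $b\neq 0$ and check that $\bigvee X$ is absolutely connected by verifying \ref{cond:E4} directly. Given a totally mail-disconnected $S\subseteq L^+$ with $\bigvee X\leqslant\bigvee S$, each $x\in X$ satisfies $x\leqslant\bigvee S$, so \ref{cond:E4} applied to $x$ yields $x\leqslant s_x$ for some $s_x\in S$. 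The nonzero common lower bound $b$ then lies below every $s_x$, and since no two distinct elements of $S$ share a common nonzero lower bound, all the $s_x$ coincide with a single $s\in S$; hence $\bigvee X\leqslant s$, which is \ref{cond:E4} for $\bigvee X$. This gives \ref{cond:CL1}, so $(L,\mathcal{C})$ is a typical connectivity lattice and Theorem~\ref{thmJ} furnishes the connectivity adjunction $S\mapsto\bigvee S \dashv x\mapsto\mathcal{C}(x)$, together with the fact that each $\mathcal{C}(x)$ is totally mail-disconnected in $L^+$.

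Next I would describe the Galois closed elements of $\mathcal{D}(\mathcal{C})$, namely those $S$ with $S=\mathcal{C}(\bigvee S)$, equivalently the sets of the form $\mathcal{C}(x)$. The forward direction is immediate from Theorem~\ref{thmJ}: every $\mathcal{C}(x)$, and hence every Galois closed set, is totally mail-disconnected in $L^+$. For the converse, suppose $S\in\mathcal{D}(\mathcal{C})$ is totally mail-disconnected in $L^+$ and put $x=\bigvee S$; I must show $S=\mathcal{C}(x)$. Each component $c\in\mathcal{C}(x)$ is absolutely connected with $c\leqslant x=\bigvee S$, so \ref{cond:E4} gives $c\leqslant s$ for some $s\in S$; since $s\in\mathcal{C}\cap x^\downarrow$ and $c$ is a maximal element of $\mathcal{C}\cap x^\downarrow$, maximality forces $c=s\in S$, whence $\mathcal{C}(x)\subseteq S$. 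Conversely, each $s\in S$ lies below some component $c\in\mathcal{C}(x)$ by Theorem~\ref{thmV}, and we have just seen $c\in S$; as $S$ is totally mail-disconnected in $L^+$ this forces $s=c\in\mathcal{C}(x)$. Thus $S=\mathcal{C}(x)$ is Galois closed, completing the characterization.

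Finally, for the consequence assume $(L,\mathcal{C})$ is well-founded and let $X\in\mathcal{D}(\mathcal{C})$ be arbitrary; separatedness requires $X=\mathcal{C}(\bigvee X)$, i.e.\ that $X$ be Galois closed. By the characterization just proved it suffices to upgrade total mail-disconnectedness in $\mathcal{C}$ to total mail-disconnectedness in $L^+$. If two distinct $s_1,s_2\in X$ had a common lower bound $b\neq 0$, then \ref{cond:CL1.5} would supply $c\in\mathcal{C}$ with $c\leqslant b$, so that $c\leqslant s_1$ and $c\leqslant s_2$ exhibit a common lower bound in $\mathcal{C}$, contradicting $X\in\mathcal{D}(\mathcal{C})$. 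Hence $X$ is totally mail-disconnected in $L^+$, so Galois closed, and $(L,\mathcal{C})$ is separated.

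The only genuinely substantive step is the converse inclusion in the second paragraph: it is precisely there that the full force of absolute connectedness (\ref{cond:E4}), rather than mere connectedness, is used, since it is what converts $c\leqslant\bigvee S$ into $c\leqslant s$ for an honest element $s\in S$. Everything else is bookkeeping once the adjunction of Theorem~\ref{thmJ} and the maximality description of components are in hand.
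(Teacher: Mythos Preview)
Your proof is correct and follows essentially the same route as the paper's. The only cosmetic differences are that you make the check $0\notin\mathcal{C}$ explicit, and for the converse direction of the Galois-closed characterization you argue set-theoretically (upgrading $c\leqslant s$ to $c=s$ via maximality, then showing both inclusions), whereas the paper argues order-theoretically in $\mathcal{D}(\mathcal{C})$: from $c\leqslant s$ for each $c\in\mathcal{C}(\bigvee S)$ one gets $\mathcal{C}(\bigvee S)\leqslant S$ in the order on $\mathcal{D}(\mathcal{C})$, and since the unit of the adjunction gives $S\leqslant\mathcal{C}(\bigvee S)$, antisymmetry yields equality directly---slightly slicker, but the same idea.
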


\begin{proof}
 Consider the join $a$ of a non-empty set $X$ of absolutely connected elements that has a lower bound $b$ different from $0$. Suppose $a\leqslant \bigvee S$, where $S$ is as in \ref{cond:E4}. Then $x\leqslant \bigvee S$ for each $x\in X$. Since each $x$ is a absolutely connected, it must be below some $s_x\in S$. Then $b$ is a lower bound for all of $s_x$'s and since $S$ is totally mail-disconnected in $L^+$, this implies that all $s_x$'s are the same element $s$ of $S$. This would force $a$ to be below that very element $s$, thus proving that $a$ is absolutely connected. We have thus established that $(L,\mathcal{C})$ is a typical connectivity lattice. We know already from Theorem~\ref{thmJ} that each Galois closed element of $\mathcal{D}(\mathcal{C})$ is totally mail-disconnected in $L^+$. For the converse, consider an element $S$ of $\mathcal{D}(\mathcal{C})$ that is totally mail-disconnected in $L^+$. Pick any $x\in \mathcal{C}\left(\bigvee S\right)$. Since $x$ is below $\bigvee S$ and is absolutely connected, we must have $x\leqslant s$ for some $s\in S$. This shows that $\mathcal{C}\left(\bigvee S\right)\leqslant S$ and hence $\mathcal{C}\left(\bigvee S\right)=S$. When $(L,\mathcal{C})$ is well-founded, every element of $\mathcal{D}(\mathcal{C})$ is totally mail-disconnected in $L^+$ and hence $(L,\mathcal{C})$ is separated.
\end{proof}

By Theorem~\ref{thmY}, absolutely connected elements in a frame are the same as the connected elements of the frame, in the usual sense. Thanks to the theorem above, some of the results in \cite{BABOOLAL19913} about connected elements in frames can then be deduced from the general theory of connectivity presented here.

In \cite{Serra1998Nov} it is noted that once a subset $\mathcal{C}$ of a lattice $L$ satisfies \ref{cond:CL2}, all atoms and all completely join-prime elements (which in \cite{Serra1998Nov} are referred to as strongly co-prime elements) belong to $\mathcal{C}$ (see, in particular, Proposition 3 and the paragraph after its proof in \cite{Serra1998Nov}). As we establish in the following theorem, the same can be said about absolutely connected elements (which include all completely join-prime elements), once the pair $(L,\mathcal{C})$ is a connectivity lattice. 

\begin{theorem}\label{thmP}
For any Serra connectivity lattice $(L,\mathcal{C})$, the set $\mathcal{C}$ contains all absolutely connected elements. Furthermore, $\mathcal{C}$ is an absolute connectivity in a complete lattice $L$ if and only if $\mathcal{C}$ is the set of all absolutely connected elements in $L$ and every element of $L$ is a join of absolutely connected elements.
\end{theorem}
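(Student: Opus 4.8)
The plan is to prove Theorem~\ref{thmP} in two parts, treating the ``contains all absolutely connected elements'' claim first and then the biconditional characterization of absolute connectivity.

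For the first claim, let $(L,\mathcal{C})$ be a Serra connectivity lattice and let $a$ be absolutely connected. Since $\mathcal{C}$ is a Serra connectivity, \ref{cond:CL2} holds, so $a=\bigvee(\mathcal{C}\cap a^\downarrow)$; in fact $a=\bigvee\mathcal{C}(a)$ by Theorem~\ref{thmU}. The components $\mathcal{C}(a)$ form a totally mail-disconnected set in $\mathcal{C}$, but the cleanest route is to use the separatedness machinery: by Corollary~\ref{thmR}, a Serra connectivity is not a kernel connectivity (so $0\notin\mathcal{C}$), hence it is a typical connectivity, so $\mathcal{C}(a)$ is totally mail-disconnected in $L^+$ by Theorem~\ref{thmJ}. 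Now $a\leqslant\bigvee\mathcal{C}(a)$ and $\mathcal{C}(a)$ is totally mail-disconnected in $L^+$, so by property \ref{cond:E4} there is some component $c\in\mathcal{C}(a)$ with $a\leqslant c$. Since $c\leqslant a$ as well, $a=c\in\mathcal{C}$.

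For the biconditional, I would prove each direction separately. For the ``if'' direction, assume $\mathcal{C}$ is exactly the set $\mathcal{A}$ of absolutely connected elements and that every element of $L$ is a join of elements of $\mathcal{A}$. By Theorem~\ref{thmO}, $(L,\mathcal{A})$ is a typical connectivity lattice, and the hypothesis that every element is a join of absolutely connected elements is precisely \ref{cond:CL2}, so $(L,\mathcal{A})$ is a Serra connectivity lattice; in particular it is well-founded (\ref{cond:CL2} implies \ref{cond:CL1.5} by Theorem~\ref{thmS}). Theorem~\ref{thmO} then gives that $(L,\mathcal{A})$ is separated, and by Theorem~\ref{thmQ}(b) a separated Serra connectivity lattice is the same as an absolute connectivity lattice. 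For the ``only if'' direction, suppose $\mathcal{C}$ is an absolute connectivity. Then $(L,\mathcal{C})$ is in particular a separated Serra connectivity lattice (again Theorem~\ref{thmQ}(b)), so \ref{cond:CL2} holds, which is exactly the statement that every element of $L$ is a join of connected elements. It remains to show $\mathcal{C}$ equals the set of absolutely connected elements. The inclusion $\mathcal{A}\subseteq\mathcal{C}$ is the first part of the theorem just proved. For the reverse inclusion, I would take $c\in\mathcal{C}$ and show it satisfies \ref{cond:E4}: given a totally mail-disconnected $S\subseteq L^+$ with $c\leqslant\bigvee S$, I want $c\leqslant s$ for some $s\in S$. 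Here absoluteness of the connectivity adjunction is the crucial hypothesis; the idea is that $\{c\}$ is a totally mail-disconnected set in $\mathcal{C}$, and its image under the Galois-closure/component machinery, combined with the fact that the adjunction is an isomorphism, forces $c$ to land below a single element of $S$ rather than genuinely spreading across several summands.

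The main obstacle I anticipate is precisely this reverse inclusion $\mathcal{C}\subseteq\mathcal{A}$, i.e.\ showing that every connected element of an absolute connectivity lattice is absolutely connected. The subtlety is that $S$ is totally mail-disconnected in $L^+$ but its elements need not be connected, so one cannot directly invoke the component decomposition of $\bigvee S$ within $\mathcal{C}$. My approach would be to exploit that the connectivity adjunction is an isomorphism to transport the decomposition: compare $\mathcal{C}(\bigvee S)$ (computed in $\mathcal{C}$) against $S$, using that $c$ being connected means $\{c\}$ is already totally mail-disconnected, and that under an isomorphism the closure of $\{c\}$ must be a singleton, pinning $c$ below exactly one $s$. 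If that transport argument proves delicate, a fallback is to note that \ref{cond:E1}--\ref{cond:E4} need only be verified for one of the equivalent conditions, and to check the more elementary \ref{cond:E3} or \ref{cond:E1} directly against the structure forced by separatedness plus saturatedness.
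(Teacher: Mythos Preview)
Your proof of the first claim and of the ``if'' direction of the biconditional are correct and essentially identical to the paper's arguments.

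The gap is in the reverse inclusion $\mathcal{C}\subseteq\mathcal{A}$ of the ``only if'' direction, exactly where you anticipated trouble. Your plan to ``compare $\mathcal{C}(\bigvee S)$ against $S$'' and to use that ``the closure of $\{c\}$ must be a singleton'' does not, as stated, bridge the difficulty you yourself identified: a priori the components $\mathcal{C}(\bigvee S)$ bear no direct relation to the elements of $S$, since elements of $S$ need not be connected. Knowing that $c$ lies below some $d\in\mathcal{C}(\bigvee S)$ does not yet tell you that $d\leqslant s$ for a single $s\in S$; and the fact that $\mathcal{C}(c)=\{c\}$ is about $c$, not about $S$.

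The paper fills this with one concrete move: set $T=\bigcup_{s\in S}\mathcal{C}(s)$. Because $S$ is totally mail-disconnected in $L^+$ and $0\notin\mathcal{C}$, the set $T$ is totally mail-disconnected in $\mathcal{C}$; because \ref{cond:CL2} holds, each $s=\bigvee\mathcal{C}(s)$ and hence $\bigvee T=\bigvee S$. Separatedness then gives $T=\mathcal{C}(\bigvee T)=\mathcal{C}(\bigvee S)$, so the component of $\bigvee S$ dominating $c$ is some $t\in\mathcal{C}(s)$ for a specific $s\in S$, whence $c\leqslant t\leqslant s$. This passage from $S$ to $T$ is the missing idea; once you insert it, your outline goes through. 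Your fallback of verifying \ref{cond:E1} or \ref{cond:E3} instead would face the same obstacle, since the non-connectedness of the summands reappears there as well.
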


\begin{proof}
Let $(L,\mathcal{C})$ be a Serra connectivity lattice. Consider an absolutely connected $a\in L$. By Theorem~\ref{thmJ}, $a=\bigvee \mathcal{C}(a)$ where $\mathcal{C}(a)$ is totally mail-disconnected set in $L^+$. Then $a\leqslant s$ for $s\in \mathcal{C}(a)$ and hence $a\in \mathcal{C}(a)$, showing that $a\in\mathcal{C}$. Suppose $(L,\mathcal{C})$ is a separated Serra connectivity lattice. We already know that $\mathcal{C}$ contains all absolutely connected elements. Consider an element $a\in\mathcal{C}$. Suppose $a\leqslant \bigvee S$ where $S$ is totally mail-disconnected in $L^+$. Let $T=\bigcup_{s\in S}\mathcal{C}(s)$. Note that $T$ is totally mail-disconnected in $\mathcal{C}$. Since for each $s\in S$ we have $s=\bigvee\mathcal{C}(s)$, we have that $a\leqslant \bigvee S=\bigvee T$. By separatedness, $T=\mathcal{C}(\bigvee T)$. This implies that $a\leqslant t$ for some $t\in T$, and $t\leqslant s$ for some $s\in S$ where $t\in\mathcal{C}(s)$, thus showing that $a$ is absolutely connected. So $\mathcal{C}$ is the set of all absolutely connected elements. Then also every element $x\in L$ is a join $x=\bigvee \mathcal{C}(x)$ of absolutely connected elements. Finally, suppose $\mathcal{C}$ is the set of all absolutely connected elements. Then we already know by Theorem~\ref{thmO} that $(L,\mathcal{C})$ is a typical connectivity lattice. If every element of $\mathcal{C}$ is a join of absolutely connected elements, then $(L,\mathcal{C})$ will be a Serra connectivity lattice by Theorem~\ref{thmJ}. Furthermore, in this case $(L,\mathcal{C})$ is well-founded and hence separated by Theorem~\ref{thmO}.
\end{proof}

\begin{example}\label{exaP}
    A \emph{locally connected frame} is a frame where every element is a join of elements satisfying \ref{cond:E2}, as per Definition 1.1 in \cite{BABOOLAL19913}. Since conditions \hyperref[cond:E1]{(E1-4)} are equivalent in a frame (Theorem~\ref{thmY}), a frame is locally connected if and only if it is an absolute connectivity lattice. Recall that the frame of open sets of a topological space is locally connected if and only if the topological space is  locally connected (i.e., the set of open connected subsets is a base for the topology, see~\cite{kelly_1955}).
\end{example}

\begin{theorem}
In an absolute connectivity lattice, any element $a$ satisfying \ref{cond:E2} is absolutely connected and hence the conditions \hyperref[cond:E1]{(E1-4)} are equivalent for every element $a$. 
\end{theorem}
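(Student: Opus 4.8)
The plan is to reduce everything to the single implication \ref{cond:E2}$\Rightarrow$\ref{cond:E4}: once this is shown, the equivalence of \hyperref[cond:E1]{(E1-4)} for every element follows from the implications recorded right after the definition of absolutely connected elements (there \ref{cond:E4} already implies \ref{cond:E1} and \ref{cond:E3}, each of which implies \ref{cond:E2}), so \ref{cond:E2}$\Leftrightarrow$\ref{cond:E4} collapses all four. I would fix notation first: in an absolute connectivity lattice $(L,\mathcal{C})$ we have, by Theorem~\ref{thmQ}(b), a separated Serra connectivity lattice, and by Theorem~\ref{thmP} the set $\mathcal{C}$ is exactly the set of absolutely connected elements while every element is a join of (nonzero) absolutely connected elements; in particular $(L,\mathcal{C})$ is well-founded, so \ref{cond:CL1.5} holds.

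Now suppose $a$ satisfies \ref{cond:E2} but, for contradiction, $a\notin\mathcal{C}$. Writing $S=\mathcal{C}(a)$, saturation gives $a=\bigvee S$ (Theorem~\ref{thmU}), well-foundedness makes $S$ totally mail-disconnected in $L^+$ (as in the proof of Theorem~\ref{thmO}), and since $a\neq 0$ we have $S\neq\varnothing$ while $a\notin\mathcal{C}$ forces $\lvert S\rvert\geqslant 2$. Fixing $b\in S$ and putting $x=b$ and $y=\bigvee(S\setminus\{b\})$, I obtain a decomposition $a=x\vee y$ with $x\neq a$ (else $a=b\in\mathcal{C}$) and $y\neq a$ (else, using separatedness, $S=\mathcal{C}(a)=\mathcal{C}(y)=S\setminus\{b\}$, which is impossible). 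To contradict \ref{cond:E2} it then remains only to check $x\wedge y=0$.

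I expect this last step to be the main obstacle, because the obvious argument---distributing $b\wedge\bigvee(S\setminus\{b\})$ over the join and invoking $b\wedge s=0$---is unavailable, since an absolute connectivity lattice need not be a frame. The substitute I would use is well-foundedness itself. If $x\wedge y\neq 0$, then \ref{cond:CL1.5} yields a nonzero connected $c\leqslant x\wedge y$; then $c\leqslant b$, and from $c\leqslant y$ together with Theorem~\ref{thmV} and the identity $\mathcal{C}(y)=S\setminus\{b\}$ (separatedness (CL3)) I get $c\leqslant s$ for some $s\in S\setminus\{b\}$. Thus $b$ and the distinct element $s$ of $S$ share the nonzero lower bound $c$ in $L^+$, contradicting that $S$ is totally mail-disconnected in $L^+$. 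Hence $x\wedge y=0$, which completes the contradiction with \ref{cond:E2} and forces $\lvert S\rvert=1$, i.e.\ $a=\bigvee S\in\mathcal{C}$ is absolutely connected.

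Everything outside the meet computation is routine bookkeeping with the connectivity adjunction being an isomorphism (saturation and separatedness translate into $a=\bigvee\mathcal{C}(a)$ and $\mathcal{C}(\bigvee X)=X$ for totally mail-disconnected $X$). So the only genuinely delicate point is replacing frame distributivity by the well-foundedness argument, and I would present that single step with the most care.
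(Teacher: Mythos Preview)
Your proof is correct and follows essentially the same approach as the paper: decompose $a=\bigvee\mathcal{C}(a)$, split off one component $b$ from the rest, show $b\wedge\bigvee(\mathcal{C}(a)\setminus\{b\})=0$ via well-foundedness, and invoke \ref{cond:E2}. The only cosmetic difference is that where you route through separatedness (computing $\mathcal{C}(y)=S\setminus\{b\}$ via (CL3) and then applying Theorem~\ref{thmV}), the paper instead uses \ref{cond:E4} directly for the absolutely connected witness $c'\leqslant x\wedge y$: since $c'\leqslant\bigvee(S\setminus\{b\})$ and $S\setminus\{b\}$ is totally mail-disconnected in $L^+$, \ref{cond:E4} immediately gives $c'\leqslant s$ for some $s\neq b$, and likewise the case $a=\bigvee(S\setminus\{b\})$ is excluded by applying \ref{cond:E4} to $b$ itself rather than by recomputing components. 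Both routes are equally short.
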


\begin{proof}
Consider an element $a$ satisfying \ref{cond:E2}. We have $a=\bigvee \mathcal{C}(a)$, where $\mathcal{C}$ is the set of absolutely connected elements. Consider any element $c\in \mathcal{C}(a)$ and the set $S=\mathcal{C}(a)\setminus\{c\}$. If $c\wedge \bigvee S\neq 0$ then there is an absolutely connected $c'$ below $c\wedge \bigvee S$. Since $S$ is totally mail-disconnected in $L^+$, $c'\leqslant s$ for some $s\in S$ such that $s\neq c$. Then $c'\leqslant c\wedge s=0$, a contradiction. So $c\wedge \bigvee S=0$. But $a=c\vee \bigvee S$, so by \ref{cond:E2}, $a=c$ or $a=\bigvee S$. However, $a=\bigvee S$ is not possible since it forces $c$ to be below one of the elements of $S$. So $a=c$ and $a$ is absolutely connected.
\end{proof}

\begin{example}\label{exaR}
In Example~\ref{exaE}, although every element is a join of elements satisfying \ref{cond:E3}, the lattice is an absolute connectivity lattice only when the topological space is discrete, in which case the lattice is just the power set of the underlying set.
\end{example}

\begin{theorem}\label{thmZ} $\mathcal{D}(\Gamma)$ is an absolute connectivity lattice for any chainmail $\Gamma$. The chainmail $\mathcal{C}$ of absolutely connected elements in $\mathcal{D}(\Gamma)$ is given by the set $\{\{x\}\mid x\in\Gamma\}$ and hence the chainmails $\mathcal{C}$ and $\Gamma$ are isomorphic posets.
\end{theorem}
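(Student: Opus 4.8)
The plan is to first pin down the absolutely connected elements of $\mathcal{D}(\Gamma)$ and then assemble the conclusion from the general machinery of the previous section together with the structural results of this one. Throughout I would work in the concrete model of $\mathcal{D}(\Gamma)$ supplied by Theorem~\ref{thmF}: the isomorphism between $\mathcal{D}(\Gamma)$ and the poset of down-closed subchainmails of $\Gamma$ ordered by inclusion, under which a singleton $\{x\}$ corresponds to the principal down-set $x^\downarrow$. In this model the bottom is $\varnothing$ and meets are intersections, since an intersection of down-closed subchainmails is again one.

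The two computational facts I would establish first are: \emph{(a)} a subset $S$ of $\mathcal{D}(\Gamma)^+$ is totally mail-disconnected exactly when the corresponding down-closed subchainmails are pairwise disjoint; and \emph{(b)} the join of a pairwise-disjoint family of down-closed subchainmails is their union. Fact (a) holds because two nonempty down-closed subchainmails have a common lower bound in $\mathcal{D}(\Gamma)^+$ iff their intersection is nonempty. Fact (b) holds because a mail $M$ contained in such a union must lie entirely inside one member: any two elements of $M$ share a lower bound, which by down-closure lies in the intersection of their members and so forces those members to coincide; hence the union is already closed under joins of mails and is a subchainmail by Theorem~\ref{thmAA}.

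With (a) and (b) in hand I would verify that the absolutely connected elements are precisely the principal down-sets. For one direction, to check \ref{cond:E4} for $x^\downarrow$, suppose $x^\downarrow\leqslant\bigvee S$ with $S$ totally mail-disconnected in $\mathcal{D}(\Gamma)^+$; by (a) and (b) this join is the disjoint union, so $x$ lies in one member $s$, giving $x^\downarrow\subseteq s$. For the converse, let $a$ be absolutely connected, hence nonempty; by Theorem~\ref{thmF}, $a$ is the down-set of the totally mail-disconnected set $D$ of joins of its maximal mail-connected subsets, and each such join lies in $a$. Then $S=\{d^\downarrow\mid d\in D\}$ is totally mail-disconnected in $\mathcal{D}(\Gamma)^+$ with $a=\bigvee S$, so \ref{cond:E4} yields $a\subseteq d^\downarrow$ for some $d\in D$, and $d^\downarrow\subseteq a$ forces $a=d^\downarrow$. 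Thus $\mathcal{C}=\{\{x\}\mid x\in\Gamma\}$, and $x\mapsto\{x\}$ is an order-isomorphism $\Gamma\cong\mathcal{C}$ since $\{x\}\leqslant\{y\}$ iff $x\leqslant y$.

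It then remains to see that $\mathcal{D}(\Gamma)$ is an absolute connectivity lattice. By Theorem~\ref{thmO}, $(\mathcal{D}(\Gamma),\mathcal{C})$ is a typical connectivity lattice; it is well-founded because every nonempty down-closed subchainmail contains some $x^\downarrow$, and hence it is separated by the same theorem. Since every down-closed subchainmail is the join of the principal down-sets of its elements, \ref{cond:CL2} holds, so the connectivity is Serra; a separated Serra connectivity lattice is absolute by Theorem~\ref{thmQ}(b), completing the proof. I expect the main obstacle to be the two computational facts (a) and (b): once the dictionary between totally mail-disconnected subsets and joins in $\mathcal{D}(\Gamma)$ on the one hand, and pairwise-disjoint unions of down-closed subchainmails on the other, is set up correctly, the verification of \ref{cond:E4} in both directions and the appeals to Theorems~\ref{thmO} and \ref{thmQ} are routine.
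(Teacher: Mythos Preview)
Your proof is correct and follows essentially the same route as the paper: identify the totally mail-disconnected subsets of $\mathcal{D}(\Gamma)^+$ and show their joins are unions, use this to pin down the absolutely connected elements as the singletons (equivalently, principal down-sets in your model), and then observe that every element is a join of these. The only cosmetic differences are that you work in the down-closed-subchainmail model of Theorem~\ref{thmF} rather than directly with totally mail-disconnected sets, and you finish via Theorems~\ref{thmO} and~\ref{thmQ}(b) whereas the paper invokes Theorem~\ref{thmP}; neither changes the substance of the argument.
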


\begin{proof}
Let $L=\mathcal{D}(\Gamma)$, where $\Gamma$ is a chainmail. We already know that $L$ is a complete lattice (Theorem~\ref{corB}). It is easy to see that a set $S\subseteq L$ is totally mail-disconnected in $L^+$ if and only if the following three conditions hold:
\begin{itemize}
    \item[(i)] $\varnothing\notin S$.
    \item[(ii)] $A\cap B=\varnothing$ for any two distinct elements $A,B\in S$.
    \item[(iii)] $\bigcup S$ is a totally mail-disconnected set in $\Gamma$.
\end{itemize}
Then the join $\bigvee S$ of such $S$ in $L$ is given by $\bigvee S=\bigcup S$. After this it is easy to see that absolutely connected elements of $L$ are precisely the singletons $\{x\}$ where $x\in\Gamma$. Now, consider an arbitrary element $X\in \mathcal{D}(\Gamma)$. Then $S=\{\{x\}\mid x\in X\}$ is totally mail-disconnected in $L^+$ and hence $X=\bigcup S=\bigvee S$, proving that every element of $\mathcal{D}(\Gamma)$ is a join of absolutely connected elements. Theorem~\ref{thmP} then completes the proof.
\end{proof}

The theorem above gives the following fundamental result as a straightforward consequence.

\begin{corollary}\label{corA} There is a one-to-one correspondence between isomorphism classes of absolute connectivity lattices and isomorphism classes of chainmails. In the forward direction, it is given by mapping absolute connectivity lattices to the chainmail of its absolutely connected elements, while in the backward direction it is given by mapping a chainmail to the lattice of its totally mail-disconnected sets.
\end{corollary}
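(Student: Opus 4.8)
The plan is to exhibit the two maps explicitly and to check that they are mutually inverse on isomorphism classes. In the backward direction, I would send a chainmail $\Gamma$ to the absolute connectivity lattice $\mathcal{D}(\Gamma)$, equipped with its set of absolutely connected elements; this assignment is well-defined by Theorem~\ref{thmZ}, and it respects isomorphisms because any order-isomorphism $\Gamma\to\Gamma'$ clearly induces an order-isomorphism $\mathcal{D}(\Gamma)\to\mathcal{D}(\Gamma')$ carrying totally mail-disconnected sets to totally mail-disconnected sets. In the forward direction, I would send an absolute connectivity lattice $(L,\mathcal{C})$ to $\mathcal{C}$, viewed as a chainmail; this is well-defined because $\mathcal{C}$ is a subchainmail of $L$, hence itself a chainmail, and it respects isomorphisms of connectivity lattices by definition.

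First I would verify that the forward map followed by the backward map is the identity on isomorphism classes of chainmails. This is precisely the content of Theorem~\ref{thmZ}: starting from a chainmail $\Gamma$, the absolutely connected elements of $\mathcal{D}(\Gamma)$ are the singletons $\{x\}$ with $x\in\Gamma$, and the assignment $x\mapsto\{x\}$ is an isomorphism of posets between $\Gamma$ and this chainmail. Thus the composite returns $\Gamma$ up to isomorphism.

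The main point — and the step I expect to require the most care — is the other composite: starting from an absolute connectivity lattice $(L,\mathcal{C})$, I must show that $\mathcal{D}(\mathcal{C})$, equipped with its absolutely connected elements, is isomorphic to $(L,\mathcal{C})$ as a connectivity lattice. Two facts combine here. First, by Theorem~\ref{thmP} the set $\mathcal{C}$ of an absolute connectivity lattice coincides with the set of all absolutely connected elements of $L$, so the forward map genuinely records the absolutely connected elements. Second, by the definition of an absolute connectivity lattice the connectivity adjunction $\mathcal{D}(\mathcal{C})\rightleftarrows L$, whose left adjoint is $S\mapsto\bigvee S$, is a lattice isomorphism. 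It then remains to check that this isomorphism carries the connectivity of $\mathcal{D}(\mathcal{C})$ onto $\mathcal{C}$. By Theorem~\ref{thmZ} the absolutely connected elements of $\mathcal{D}(\mathcal{C})$ are exactly the singletons $\{c\}$ with $c\in\mathcal{C}$, and the left adjoint sends $\{c\}$ to $\bigvee\{c\}=c$; hence the image of the connectivity of $\mathcal{D}(\mathcal{C})$ under the isomorphism is precisely $\mathcal{C}$, making it an isomorphism of connectivity lattices. Thus this composite also returns $(L,\mathcal{C})$ up to isomorphism.

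Having shown that both composites are the identity on isomorphism classes, the two maps descend to mutually inverse bijections between the isomorphism classes of absolute connectivity lattices and those of chainmails, which is the claimed one-to-one correspondence. The only genuine subtlety is ensuring that the lattice isomorphism furnished by absoluteness is an isomorphism of \emph{connectivity} lattices — that it matches up the two connectivities — and this is settled entirely by the computation $\bigvee\{c\}=c$ on singletons above.
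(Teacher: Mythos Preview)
Your proof is correct and follows essentially the approach the paper has in mind: the paper states the corollary as a straightforward consequence of Theorem~\ref{thmZ} and gives no further argument, and you have filled in precisely the details one needs, namely that the connectivity adjunction furnishes the lattice isomorphism $\mathcal{D}(\mathcal{C})\cong L$, that Theorem~\ref{thmP} identifies $\mathcal{C}$ with the absolutely connected elements, and that singletons map to their unique members under $S\mapsto\bigvee S$. One minor slip of labeling: in your second paragraph you write ``forward map followed by the backward map'' for the composite that starts from a chainmail, but that composite is backward-then-forward; the mathematics is unaffected.
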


We then also get the following easy but fundamental consequence: it says that in some sense, absolute connectivity is a universal connectivity.

\begin{corollary}
A subset $\mathcal{C}$ of a complete lattice $L$ is a connectivity in $L$ if and only if there is an absolute connectivity lattice $L'$ and a left Galois adjoint map $L'\to L$ with every absolutely connected element of $L'$ being Galois closed, such that $\mathcal{C}$ is the image of absolutely connected elements under the left Galois adjoint map.
\end{corollary}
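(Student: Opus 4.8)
The statement is a biconditional whose two directions differ greatly in difficulty, so the plan is to treat them separately and to spend almost all of the effort on the converse.

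For the forward direction I would argue as follows. Given a connectivity $\mathcal{C}$ in $L$, the set $\mathcal{C}$ is a subchainmail of $L$, hence a chainmail, and I would set $L'=\mathcal{D}(\mathcal{C})$. By Theorem~\ref{thmZ} this $L'$ is an absolute connectivity lattice whose absolutely connected elements are exactly the singletons $\{x\}$ with $x\in\mathcal{C}$. The required left Galois adjoint $L'\to L$ is then the left adjoint $S\mapsto\bigvee S$ of the connectivity adjunction provided by Theorem~\ref{thmM}; its image on the singletons is $\{\bigvee\{x\}\mid x\in\mathcal{C}\}=\mathcal{C}$, as required. The last thing to check is that each singleton is Galois closed, i.e. that $\mathcal{C}(\bigvee\{x\})=\{x\}$; but $\bigvee\{x\}=x\in\mathcal{C}$ is the largest element of $\mathcal{C}\cap x^\downarrow$, so its only maximal element is $x$ and $\mathcal{C}(x)=\{x\}$.

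For the converse I am given an absolute connectivity lattice $L'$, a left Galois adjoint $h\colon L'\to L$ with right adjoint $k$, under which every absolutely connected element of $L'$ is Galois closed, and $\mathcal{C}=h(\mathcal{C}')$, where $\mathcal{C}'$ is the set of absolutely connected elements of $L'$. By Theorem~\ref{thmP}, $\mathcal{C}'$ is precisely the absolute connectivity of $L'$, so $\mathcal{C}'$ is a subchainmail of $L'$. My goal is to prove that $\mathcal{C}$ is a subchainmail of $L$, and by Theorem~\ref{thmAA} it is enough to show that $\mathcal{C}$ is closed in $L$ under joins of mails in the subposet $\mathcal{C}$. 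The crucial preliminary step is to establish that the restriction $\phi:=h|_{\mathcal{C}'}\colon\mathcal{C}'\to\mathcal{C}$ is an order isomorphism: surjectivity is by construction, monotonicity follows from that of $h$, and both injectivity and order reflection follow from Galois closure, since $h(c)\leqslant h(c')$ yields $c=k h(c)\leqslant kh(c')=c'$ for $c,c'\in\mathcal{C}'$.

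With $\phi$ in hand I would finish as follows. Let $M\subseteq\mathcal{C}$ be a mail in the subposet $\mathcal{C}$, with common lower bound $h(b)$ for some $b\in\mathcal{C}'$, and write $M=\phi(M')$. Order reflection turns the inequalities $h(b)\leqslant m$ into $b\leqslant c$ for all $c\in M'$, so $M'$ is a mail in $\mathcal{C}'$; since $\mathcal{C}'$ is a subchainmail of $L'$, the join $j:=\bigvee_{L'}M'$ lies in $\mathcal{C}'$. Because $h$ is a left Galois adjoint between complete lattices it preserves arbitrary joins, whence $\bigvee_L M=h\!\left(\bigvee_{L'}M'\right)=h(j)\in h(\mathcal{C}')=\mathcal{C}$, which is exactly the closure we need. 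The main obstacle is the order-reflection property of $\phi$: this is the single place where the Galois-closed hypothesis is indispensable, and without it a mail in $\mathcal{C}$ could fail to pull back to a mail in $\mathcal{C}'$ (common lower bounds need not transport), so the join-preservation argument would collapse. Everything else is routine bookkeeping once the isomorphism $\mathcal{C}'\cong\mathcal{C}$ and the join-preservation of $h$ are available.
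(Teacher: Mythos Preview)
Your proof is correct and is precisely the argument the paper leaves implicit: the corollary is stated without proof as ``an easy but fundamental consequence'' of Theorem~\ref{thmZ} and the connectivity adjunction of Theorem~\ref{thmM}, and you have unpacked exactly those ingredients. The forward direction is the connectivity adjunction itself (with $L'=\mathcal{D}(\mathcal{C})$), and your converse---pulling back a mail along the order isomorphism $\mathcal{C}'\cong\mathcal{C}$ forced by Galois closure, then pushing the join forward via join-preservation of the left adjoint---is the natural route and matches the paper's intent.
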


Another interpretation of the result above is that chainmails are universal posets of connected objects: a map $\mathcal{D}(\Gamma)\approx L'\to L$ such as required in the theorem above is effectively the same as a well-behaved embedding of a chainmail $\Gamma$ in $L$, where the images of such embeddings describe all connectivities $\mathcal{C}$ in $L$. 

We conclude this section with some additional examples of connectivity lattices.

Consider a pair $(L,\mathcal{C})$ where $L$ is a complete lattice and $\mathcal{C}$ is the set of all elements $a$ of $L$ such that \ref{cond:E1} holds. It is not difficult to see that $(L,\mathcal{C})$ is a typical connectivity lattice (a fact similar to the first part Theorem~\ref{thmO}). The following example is a special case of this.

\begin{example}\label{exaT}
The divisibility lattice $L$ of natural numbers (including the number $0$) is a complete lattice (where $0$ is the largest element). A set is totally mail-disconnected in $L^+$ if it is a set of coprime numbers not containing the number $1$. A number $a$ satisfies \ref{cond:E1} if and only if it is $0$ or a non-zero power of a prime number. Since every number is a product of powers of primes, the divisibility lattice of natural numbers is a Serra connectivity lattice. It is not separated, since prime numbers are not the components of their join (when we take infinitely many of them). 
\end{example}

\begin{example}\label{exaU}
When $\Gamma$ is a totally (mail-)disconnected chainmail, $\mathcal{D}(\Gamma)$ is the powerset lattice of the underlying set of $\Gamma$. So absolute connectivity lattices whose chainmails of absolutely connected elements are totally disconnected chainmails are the same as complete atomic Boolean algebras. The absolutely connected elements are then the same as atoms.
\end{example}

The link between atoms and absolutely connected elements in a complete atomic Boolean algebra partially extends to general lattices. If a complete lattice $L$ is well-founded with respect to absolutely connected elements, then clearly every atom is absolutely connected. On the other hand, if in a lattice $L$ every element is a join of atoms, then it is easy to see that every absolutely connected element will be an atom. This yields the following result.

\begin{theorem}\label{thmX}
For an atomic complete lattice $L$, the following conditions are equivalent:
\begin{itemize}
\item[(i)] $L$ is an absolute connectivity lattice.

\item[(ii)] In $L$, absolutely connected elements are the same as atoms.

\item[(iii)] $L$ is a Boolean algebra.
\end{itemize}
\end{theorem}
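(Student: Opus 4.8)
The plan is to prove the cycle (ii)$\Rightarrow$(iii)$\Rightarrow$(i)$\Rightarrow$(ii), with essentially all of the content concentrated in the first step. Write $\mathcal{A}$ for the set of atoms of $L$ and $\mathcal{E}$ for the set of absolutely connected elements; for $x\in L$ put $\mathcal{A}(x)=\{a\in\mathcal{A}\mid a\leqslant x\}$, so that atomicity of $L$ gives $x=\bigvee\mathcal{A}(x)$ for every $x$. The single observation driving the argument is that \emph{distinct atoms are disjoint}: for atoms $a,a'$ one has $a\wedge a'\in\{0,a\}$, and $a\wedge a'=a$ would force $a=a'$. Consequently every subset of $\mathcal{A}$ is totally mail-disconnected in $L^+$.

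For (ii)$\Rightarrow$(iii) I would establish an order-isomorphism $L\cong\mathcal{P}(\mathcal{A})$ via $x\mapsto\mathcal{A}(x)$. This map is order-preserving, and it is order-reflecting (hence injective) because $\mathcal{A}(x)\subseteq\mathcal{A}(y)$ gives $x=\bigvee\mathcal{A}(x)\leqslant\bigvee\mathcal{A}(y)=y$. The crucial point is surjectivity: for $A\subseteq\mathcal{A}$ I claim $\mathcal{A}(\bigvee A)=A$. The inclusion $A\subseteq\mathcal{A}(\bigvee A)$ is clear; conversely, let $b$ be an atom with $b\leqslant\bigvee A$. By the observation above $A$ is totally mail-disconnected in $L^+$, and by (ii) the atom $b$ is absolutely connected, so \ref{cond:E4} yields $b\leqslant a$ for some $a\in A$, whence $b=a\in A$ since both are atoms. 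Thus $x\mapsto\mathcal{A}(x)$ and $A\mapsto\bigvee A$ are mutually inverse order-isomorphisms; as an order-isomorphism automatically preserves all joins, meets, bounds and complements, $L$ is isomorphic to the powerset lattice $\mathcal{P}(\mathcal{A})$, which is a Boolean algebra.

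For (iii)$\Rightarrow$(i): a Boolean algebra is a frame, so by Theorem~\ref{thmY} absolute connectedness coincides with \ref{cond:E1}, and an atom $a$ satisfies \ref{cond:E1} because $a=(a\wedge x)\vee(a\wedge y)$ forces $a\wedge x=a$ or $a\wedge y=a$. Hence every atom is absolutely connected, and since $L$ is atomic every element is a join of atoms, i.e.\ a join of absolutely connected elements; Theorem~\ref{thmP} then shows $(L,\mathcal{E})$ is an absolute connectivity lattice. For (i)$\Rightarrow$(ii): by Theorem~\ref{thmP}, condition (i) is equivalent to every element of $L$ being a join of absolutely connected elements, so in particular every nonzero element has an element of $\mathcal{E}$ below it and $(L,\mathcal{E})$ is well-founded with respect to absolutely connected elements. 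The first remark in the paragraph preceding the theorem then gives $\mathcal{A}\subseteq\mathcal{E}$, while the second (using atomicity) gives $\mathcal{E}\subseteq\mathcal{A}$, so $\mathcal{E}=\mathcal{A}$, which is (ii).

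The main obstacle is the surjectivity step of (ii)$\Rightarrow$(iii). Its conceptual content is recognising that subsets of $\mathcal{A}$ are exactly the totally mail-disconnected sets of $L^+$ that \ref{cond:E4} governs, so that disjointness of distinct atoms is precisely what makes \ref{cond:E4} applicable and rules out any ``extra'' atom appearing below a join of atoms. Once this forces $\mathcal{A}(\bigvee A)=A$, the identification $L\cong\mathcal{P}(\mathcal{A})$ --- and hence the Boolean structure --- is immediate, and the remaining implications are short bookkeeping with Theorems~\ref{thmP} and \ref{thmY} together with the two remarks stated just before the theorem.
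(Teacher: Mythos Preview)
Your proof is correct. The paper does not give an explicit proof of this theorem --- it simply says ``This yields the following result'' after the two observations in the preceding paragraph --- so you have correctly filled in the details in a way entirely consistent with those hints: your (i)$\Rightarrow$(ii) is exactly the paper's intended use of the two remarks, and your (iii)$\Rightarrow$(i) via Theorems~\ref{thmY} and~\ref{thmP} is the natural route.

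The one place where you may diverge slightly from the paper's implicit intent is (ii)$\Rightarrow$(iii). Given the placement of Example~\ref{exaU} immediately after the theorem, the authors likely have in mind the route (ii)$\Rightarrow$(i) (atomic plus $\mathcal{E}=\mathcal{A}$ means every element is a join of absolutely connected elements, so Theorem~\ref{thmP} gives (i)), and then from (i) the connectivity adjunction yields $L\cong\mathcal{D}(\mathcal{A})$, which is a powerset since $\mathcal{A}$ is an antichain. Your direct verification that $x\mapsto\mathcal{A}(x)$ is an order-isomorphism onto $\mathcal{P}(\mathcal{A})$ is more elementary and self-contained --- it does not lean on the $\mathcal{D}$-machinery --- while the paper's implicit route recycles the structural result $L\cong\mathcal{D}(\mathcal{C})$ already in hand. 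Both are short; yours has the advantage of making the role of \ref{cond:E4} (via (ii)) completely transparent in the surjectivity step.
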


All of the natural examples of absolute connectivity lattices we encountered so far are frames (and hence, locally connected frames --- see Example~\ref{exaP}). However, it is not true in general that all absolute connectivity lattices are frames, as the following examples show.

\begin{example}\label{exaAA}
Let $\Gamma$ be the chainmail from Example~\ref{exaA}. Totally mail-disconnected sets in $\Gamma$ are the empty set, the singletons, and the set $\{1,4\},\{2,4\},\{3,4\}$. See Figure~\ref{figB} for the Hasse diagrams for $\mathcal{D}(\Gamma)$ and $\Gamma$.
\begin{figure}
$$\scalebox{0.8}{\begin{tikzpicture}[dot/.style={circle, draw=black, fill=black, minimum size=0em, inner sep=0.15em},
    condot/.style={circle, draw=black, fill=black, minimum size=0em, inner sep=0.15em}]
    
    \node[dot, label=right:$\varnothing$] (0) {};
    \node[condot, label=left:$\{1\}$] (1) [above=of 0]{};
    \node[condot, label=right:$\{4\}$] (2) [right=of 1]{};
    \node[condot, label=left:$\{3\}$] (4) [above=of 1]{};
    \node[condot, label=left:$\{2\}$] (3) [left=of 4]{};
    \node[dot, label=right:{{$\{1,4\}$}}] (5) [above=of 2]{};
    \node[dot, label=left:{{$\{2,4\}$}}] (6) [above=of 4]{};
    \node[dot, label=right:{{$\{3,4\}$}}] (7) [above=of 5]{};
    \node[condot, label=left:{{$\{5\}$}}] (8) [above=of 6]{};
    \node[condot, label=right:{{$\{6\}$}}] (9) [above=of 7]{};
    \node[condot, label=right:{{$\{7\}$}}] (10) [above=of 8]{};
    
    \draw[-] (0)--(1)--(3)--(6)--(8)--(10);
    \draw[-] (0)--(2)--(5)--(7)--(9)--(10);
    \draw[-] (1)--(4)--(7)--(8);
    \draw[-] (1)--(5)--(6);
    \end{tikzpicture}\hspace{2cm}
    \begin{tikzpicture}[
    dot/.style={circle, draw=black, fill=black, minimum size=0em, inner sep=0.15em}]
    
    \node[dot,label=right:1] (1) {};
    \node[dot,label=right:3] (3) [above=of 1]{};
    \node[dot,label=right:2] (2) [left=of 3]{};
    \node[dot,label=right:4] (4) [right=of 3]{};
    \node[dot,label=right:5] (5) [above=of 3]{};
    \node[dot,label=right:6] (6) [above=of 4]{};
    \node[dot,label=right:7] (7) [above=of 5]{};
    
    \draw[-] (1)--(2)--(5)--(7);
    \draw[-] (1)--(3)--(5);
    \draw[-] (3)--(6)--(7);
    \draw[-] (4)--(5);
    \draw[-] (4)--(6);
    \end{tikzpicture}}$$
    \caption{Hasse diagrams of an absolute connectivity lattice $\mathcal{D}(\Gamma)$ that is not a frame (left), and the chainmail $\Gamma$ that generates it (right).}
    \label{figB}
\end{figure}
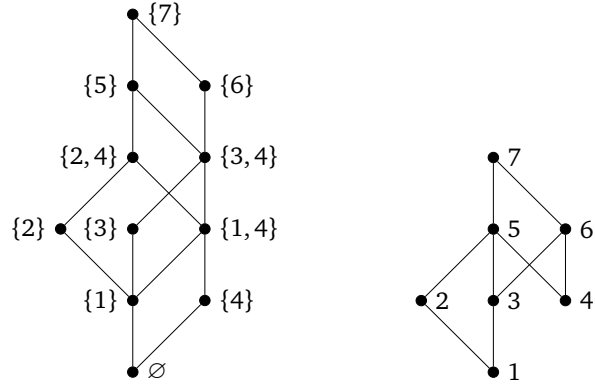
$\mathcal{D}(\Gamma)$ is an absolute connectivity lattice by Theorem~\ref{thmZ}. It is, however, not distributive and hence not a frame. Indeed, 
$$\{4\}\wedge(\{2\}\vee\{3\})=\{4\}\wedge\{5\}=\{4\},$$
while
$$(\{4\}\wedge\{2\})\vee (\{4\}\wedge\{3\})=\varnothing\vee \varnothing=\varnothing.$$
\end{example}

\begin{figure}
    \begin{tikzcd}[row sep={2.1cm,between origins}, column sep={2.1cm,between origins}, cells={nodes={draw, rounded corners, inner sep=3pt}}] & & & & & & &\\
& & & & \txt{preconnectivity\\ \ref{exaN}, \ref{exaO}}\arrow[d, no head] & & & \\
 & & & & \txt{connectivity}\arrow[ddll, no head]\arrow[ddrr, no head] & & & \\
 & & & & & & &\\
 & & \txt{well-founded\\ connectivity}\arrow[ddll, no head]\arrow[dd, no head]\arrow[ddrr, no head] & & & & \txt{not kernel\\ connectivity\\ \ref{exaV}, \ref{exaJ}, \ref{exaM}}\arrow[dl, no head]\arrow[dr, no head] & \\
 & & & & & \txt{typical\\ connectivity\\ \ref{exaH}}\arrow[dl, no head]\arrow[dr, no head] & & \txt{separated\\ connectivity\\ \ref{exaW}}\arrow[dl, no head] \\
 \txt{kernel\\ connectivity\\ \ref{exaI}}\arrow[dr, no head] & & \txt{saturated\\ connectivity}\arrow[dr, no head]\arrow[dl, no head] & & \txt{well-founded\\ typical\\ connectivity\\ \ref{exaY}}\arrow[dl, no head]\arrow[dr, no head] & &\txt{separated\\ typical\\ connectivity\\ \ref{exaG}}\arrow[dl, no head]  & \\
& \txt{degenerate\\ connectivity\\ $\mathcal{C}=L$} \arrow[ddrr, no head]& & \txt{Serra\\ connectivity\\ \ref{exaB}, \ref{exaC}, \ref{exaD}, \ref{exaT}}\arrow[dr, no head] & & \txt{well-founded\\ separated\\ connectivity\\ \ref{exaZ}}\arrow[dl, no head] & & \\
 & & & & \txt{absolute\\ connectivity\\ \ref{exaL}, \ref{exaK}, \ref{exaP}, \ref{exaR},\\ \ref{exaU}, \ref{exaAA}, \ref{exaAB}} & & & \\
 & & & \varnothing\arrow[ur, no head]\arrow[uull, no head] & & & &
\end{tikzcd}
\caption{Taxonomy of connectivity. Numbers are references to examples considered in the paper which  fall in a given taxonomic class but do not fall in the lower classes. These examples include the following connectivities: graph connectivity (Example~\ref{exaB}), hypergraph connectivity (Example~\ref{exaH}), topological connectivity (Example~\ref{exaC}), path-connectedness (Example~\ref{exaD}), trees (Examples~\ref{exaG} and \ref{exaK}), rectangles (Example~\ref{exaO}), incomparable elements (Example~\ref{exaV}), open sets (Example~\ref{exaI}), $k$-connectivity in graphs (Example~\ref{exaJ}), connectivity in locally connected frames, including connected open sets among all open sets (Example~\ref{exaP}), powers of primes (Example~\ref{exaT}), atoms in complete atomic Boolean algebras (Example~\ref{exaU}).}
\label{figA}
\end{figure}
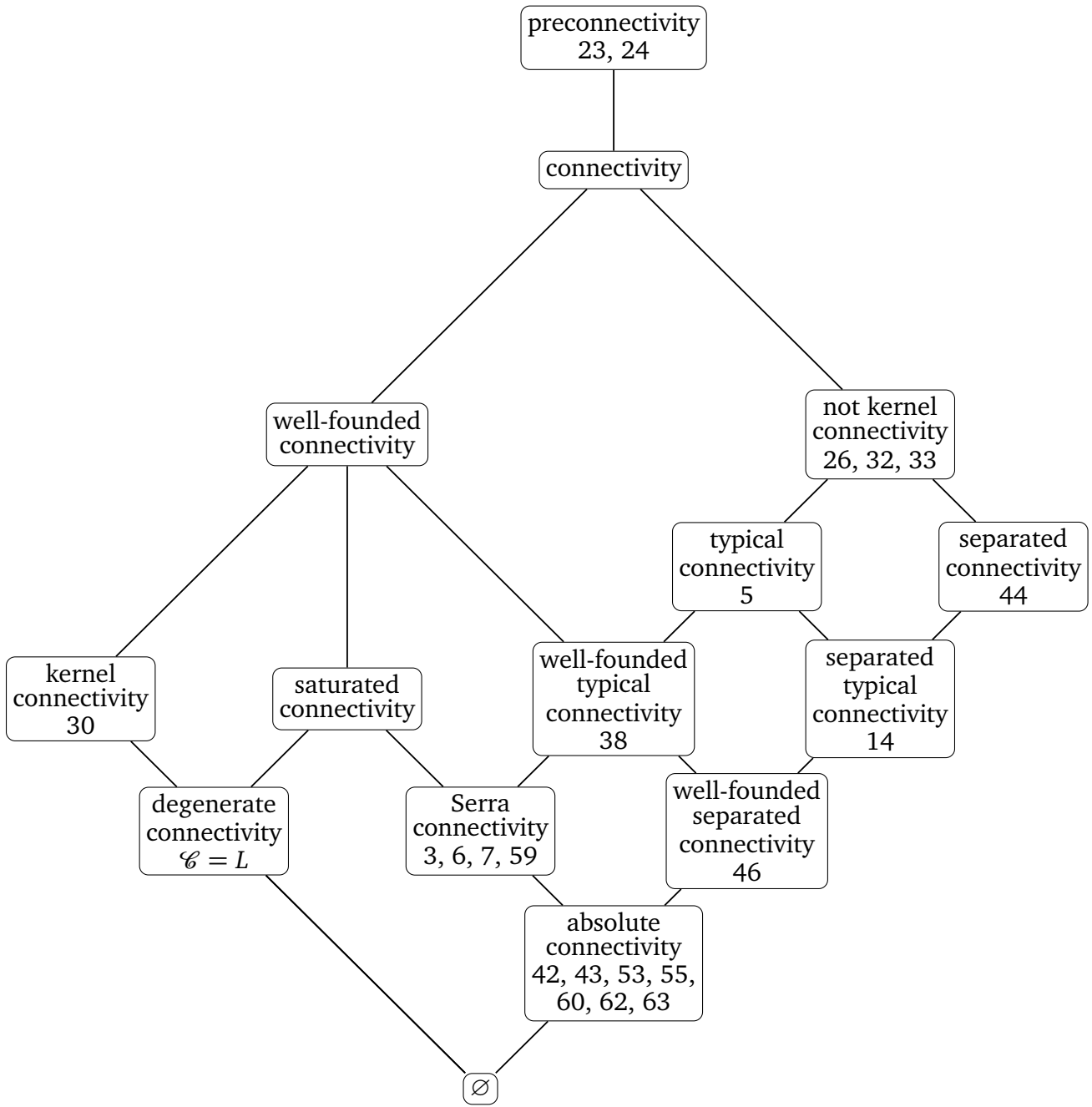

\begin{example}\label{exaAB} Consider any complete lattice $L^+$ and attach to it a new bottom element. The old elements in the new lattice $L$ will be the absolutely connected elements, since the only totally mail-disconnected subsets of $L^+$ are singletons and the empty set. Therefore, $(L,L^+)$ will be an absolute connectivity lattice: $L$ is in fact isomorphic to $\mathcal{D}(L^+)$. When $L^+$ is not a frame, neither will $L$ be a frame.
\end{example}

\section{Concluding Remarks and Some Future Directions}

\textbf{Taxonomy of connectivity.} In Figure~\ref{figA}, we classify each of the examples of preconnectivity lattices explored in this paper in terms of the various properties considered. Note that the poset of the taxonomic groups shown in Figure~\ref{figA} is a complete lattice. Moreover, the results of the paper show that: 
\begin{itemize}
\item meets in this complete lattice match with intersections of the taxonomic groups;

\item hence, the set of these taxonomic groups is a connectivity in the dual of the lattice of all classes of preconnectivities; 

\item every taxonomic group that does come with a reference to an example is the union of the taxonomic groups below it.
\end{itemize}

It would be interesting to enrich Figure~\ref{figA} with more examples, as well as explore other taxonomic groups of connectivity lattices. In particular, extending to lattices various classes of connectivity spaces studied in the literature (see, e.g., \cite{stadler_stadler_2015} and the references there) would be interesting.

It is noteworthy that most of the taxonomic groups in Figure~\ref{figA} can be characterised using natural properties of the connectivity adjunction. Indeed, the following are such characterizations. 
\begin{itemize}
\item Kernel connectivity: the right adjoint does not preserve the bottom element.
\item Not kernel connectivity: the right adjoint preserves the bottom element.
\item Well-founded connectivity: the right adjoint reflects the bottom element.
\item Saturated connectivity: the right adjoint is a right inverse of the left adjoint.
\item Well-founded typical connectivity: the right adjoint preserves and reflects the bottom element.
\item Serra connectivity: the right adjoint is a right inverse of the left adjoint and preserves the bottom element.
\item Separated connectivity: the right adjoint is a left inverse of the left adjoint.
\item Well-founded separated connectivity: the right adjoint is a left inverse of the left adjoint and reflects the bottom element.
\item Absolute connectivity: connectivity adjunction is an isomorphism.
\item Degenerate connectivity: the right adjoint is a right inverse of the left adjoint and does not preserve the bottom element.
\end{itemize}
We do not know a similar characterisation of typical, as well as typical separated connectivity.

\textbf{The category of connectivity lattices.} Note that at the bottom of the taxonomic hierarchy shown in Figure~\ref{figA} are the degenerate connectivity lattices and the absolute connectivity lattices. These are mutually disjoint classes of connectivity lattices, where in each case, the set of connected elements is unique for each complete lattice: in the first case, it is the set of all elements of the lattice, while in the second case it is the set of absolutely connected elements. It turns out that these two canonical types of connectivity lattices provide adjoint to the projections $\pi_1\colon (L,\mathcal{C})\mapsto L$ and $\pi_2\colon (L,\mathcal{C})\mapsto \mathcal{C}$. In detail, we have the following diagram of categories and functors
$$\xymatrix{\mathbf{JLat}\ar@<-2pt>[r]_-{\Delta} & \mathbf{Con}\ar@<-2pt>[l]_-{\pi_1}\ar@<-2pt>[r]_-{\pi_2} & \mathbf{Chm}\ar@<-2pt>[l]_-{\mathcal{D}} }$$
where: 
\begin{itemize}
\item $\pi_1$ is the left adjoint of $\Delta$ as well as its right inverse, 

\item while $\mathcal{D}$ is a left adjoint of $\pi_2$ as well as its left inverse (up to isomorphism). 
\end{itemize}
These categories and functors are defined as follows:  
\begin{itemize}
\item $\mathbf{JLat}$ is a category whose objects are complete lattices and morphisms are arbitrary join-preserving maps between complete lattices. Composition of morphisms is just composition of maps.

\item $\mathbf{Chm}$ is a category whose objects are chainmails and morphisms are maps between chainmails that preserve joins of mails. Composition of morphisms is, again, composition of maps.

\item $\mathbf{Con}$ is a category whose objects are connectivity lattices. A morphism $(L,\mathcal{C})\to (L',\mathcal{C}')$ between connectivity lattices $(L,\mathcal{C})$ and $(L',\mathcal{C}')$ is a morphism $f\colon L\to L'$ in $\mathbf{JLat}$ which preserves connected elements. Composition is defined as in $\mathbf{Lat}$. 

\item $\pi_1(L,\mathcal{C})=L, \;\pi_1(f)=f$.

\item $\Delta(L)=(L,L),\; \Delta(f)=f$. 

\item $\pi_2(L,\mathcal{C})=\mathcal{C}$.

\item On objects, $\mathcal{D}$ is the familiar construction of exterior, except that we want to view $\mathcal{D}(\Gamma)$ as an absolute connectivity lattice, i.e., as the pair $\mathcal{D}(\Gamma)=(\mathcal{D}(\Gamma),\{\{x\}\mid x\in\Gamma\})$. Thinking of elements of $\mathcal{D}(\Gamma)$ as down-closed subchainmails of $\Gamma$, it is clear how we can define a morphism $\mathcal{D}(f)\colon\mathcal{D}(\Gamma)\to \mathcal{D}(\Gamma')$ of connectivity lattices using a morphism $f\colon \Gamma\to\Gamma'$ of chainmails. Namely, for a down-closed subchainmail $\Sigma$ of $\Gamma$, consider the image $f(\Sigma)$ under $f$ and then intersect all down-closed subchainmails of $\Gamma'$ containing $f(\Sigma)$. 
\end{itemize}
These adjunctions are insightful. For instance, the one-to-one correspondence of Corollary~\ref{corA} arises from the category equivalence induced by the adjunction $\mathcal{D}\vdash \pi_2$. Analyzing these adjunctions would be interesting. Furthermore, studying the category $\mathbf{Con}$ and its various subcategories given by taxonomic groups of connectivity lattices would be interesting as well.

\textbf{Taxonomy of chainmails.} It is interesting to characterize chainmails that are isomorphic to chainmails of connected elements of some particular connectivity lattices. For example, chainmails of atoms in complete atomic Boolean algebras are precisely the totally disconnected chainmails (see Theorem~\ref{thmX} and Example~\ref{exaU}). In \cite{upcoming}, we characterize chainmails of connected elements in frames (see Example~\ref{exaP}). Such questions can be difficult, in general. For instance, we have not been able to find a satisfactory characterization of chainmails of connected sets in topological spaces (Example~\ref{exaC}).

\textbf{Connectivity beyond complete lattices?} A connectivity lattice is a pair $(L,\mathcal{C})$ where $L$ is a complete lattice and $\mathcal{C}$ is its subchainmail. This concept may seem more natural if we replace a complete lattice $L$ with a chainmail $\Gamma$ (i.e., we do not force $\Gamma$ to have a bottom element, which is what it takes for it to be a complete lattice). Let us call a pair $(\Gamma,\mathcal{C})$ where $\Gamma$ is a chainmail and $\mathcal{C}$ its subchainmail a  \emph{connectivity poset}. It may appear that a connectivity poset is a more general concept than a connectivity lattice. However, it is not. Every connectivity poset can be represented as a connectivity lattice by considering the pair $$(\mathcal{D}(\Gamma),\mathcal{C}')$$
where $\mathcal{C}'=\{\{x\}\mid x\in\mathcal{C}\}$. The information that $\mathcal{C}$ is a subchainmail of $\Gamma$ can be tracked by requiring that every element of $\mathcal{C}'$ is absolutely connected in $\mathcal{D}(\Gamma)$.

\textbf{Higher connectivity.}
The scope of the theory of connectivity presented in this paper is limited by consideration of `lower connectivity', in the sense that, for instance, our theory does not include consideration of connectivity in categories, where objects are related to each other not by a partial order, but a more complex structure of morphisms and their composition. A theory of `higher connectivity' that generalizes the theory presented in this paper, would, in the context of single-dimensional categories, come close to the work \cite{borger1989} of R.~B\"orger. In what follows we describe the `lower' version of main concepts and results from \cite{borger1989}, by specializing and adapting them to the case when categories there are posets.

A \emph{sink} in a poset $P$ is a pair $(x,B)$ where $x\in P$ and $B\subseteq x^\downarrow$. An element $c\in P$ is \emph{orthogonal} to a sink $(x,B)$ when $c\leqslant x$ if and only if $c\leqslant b$ for a unique $b\in B$. It is easy to see that a  set $\mathcal{C}$ of elements in a complete lattice $L$ is a connectivity in our sense if and only if every element of $\mathcal{C}$ is orthogonal to every pair $(x,\mathcal{C}(x))$. 

A set $\mathcal{C}$ of elements in a poset $P$ is said to be \emph{multicoreflective} in $P$ when for every $x\in P$ there exists a sink $(x,B)$, where $B\subseteq \mathcal{C}$, such that every element of $\mathcal{C}$ is orthogonal to it. It is almost straightforward that when $P$ is a complete lattice, $\mathcal{C}$ is multicoreflective if and only if it is a connectivity.

Consider a set $X$ of elements in a poset $P$. A \emph{local join} of $X$ is an element $x\in P$ such that $X\subseteq x^\downarrow$ and $x$ is a join of $X$ in $y^\downarrow$, for every $y\geqslant x$. As soon as $P$ has a top element, a local join becomes the same as a join. The lower version of Theorem~1.2 in \cite{borger1989} states that for a subset $\mathcal{C}$ of a poset $P$, each of the statements (i-iii) below imply the next and moreover, when every connected subset $X$ of $P$ having an upper bound has a local join, these statements are equivalent to each other.
\begin{itemize}
\item[(i)] $\mathcal{C}$ is multicoreflective in $P$.

\item[(ii)] If an element $a$ of $P$ is orthogonal to every sink $(x,B)$ which is orthogonal to every element of $\mathcal{A}$, then $a\in \mathcal{A}$. 

\item[(iii)] $P$ is closed under local joins of connected subsets.
\end{itemize}
The rest of \cite{borger1989} is concerned with examining various examples of multicoreflectiveness, which relates to earlier work on connectivity in more restrictive categorical contexts (see the references there). It would be interesting to extend the rest of the theory developed in this paper to higher connectivity.

\section*{Acknowledgments}

This paper grew out of the project originally conceptualized by the first author, who invited the third author and Nicholas Sander to join the project, during their undergraduate studies at Stellenbosch University. They invited the second author to serve as the supervisor of their project within the framework of the Foundations of Abstract Mathematics course. The project has developed considerably since its original form. A discussion over dinner between Cerene Rathilal and the second author, during her visit in Stellenbosch, in the framework of the Mathematical Structures and Modelling research programme of the National Institute for Theoretical and Computational Sciences (NITheCS), had an impact on this development. The third author made further progress in his Honors Project in the Division of Mathematics of the Department of Mathematical Sciences at Stellenbosch University, which was supervised by the second author. The paper went through substantial further development to reach its current form. 

This work made use of resources provided by subMIT at MIT Physics and the University of Stellenbosch's \href{http://www.sun.ac.za/hpc}{HPC1 (Rhasatsha)}.

The research of the first author (JFDP) was supported in part by the U.S. Department of Energy, Office
of Science, Office of Nuclear Physics under grant Contract Number DE-SC0011090. The first author would also like to thank MIT for financial support.

The third author would like to thank Stellenbosch University, the Eendrag Merit Bursary, NITheCS, the Skye Foundation, and the Cambridge Trust for financial support.

\bibliographystyle{plain}
\bibliography{refs}

\begin{thebibliography}{10}

\bibitem{BABOOLAL19913}
D.~Baboolal and B.~Banaschewski.
\newblock Compactification and local connectedness of frames.
\newblock {\em Journal of Pure and Applied Algebra}, 70(1):3--16, 1991.

\bibitem{Balachandran1955}
V.~K. Balachandran.
\newblock On complete lattices and a problem of birkhoff and frink.
\newblock {\em Proceedings of the American Mathematical Society}, 6(4):548--553, 1955.

\bibitem{borger_1983}
R.~B{\"o}rger.
\newblock Connectivity spaces and component categories.
\newblock In {\em Categorical topology: Proceedings of the International Conference held at the University of Toledo, Ohio, USA}, volume~5 of {\em Sigma Ser. Pure Math.}, pages 71--89. Heldermann, 1983.

\bibitem{borger1989}
R.~B\"orger.
\newblock Multicoreflective subcategories and coprime objects.
\newblock {\em Topology and its Applications}, 33:127--142, 1989.

\bibitem{braga2003connectivity}
U.~Braga-Neto and J.~Goutsias.
\newblock A theoretical tour of connectivity in image processing and analysis.
\newblock {\em Journal of Mathematical Imaging and Vision}, 19:5--31, 2003.

\bibitem{clementino_janelidze_2024}
M.~Manuel Clementino and G.~Janelidze.
\newblock Effective descent morphisms of filtered preorders.
\newblock {\em Order}, 2024.

\bibitem{upcoming}
J.~F. Du~Plessis, Z.~Janelidze, C.~Rathilal, and B.~A. Wessels.
\newblock On frames and chainmails (in preperation).

\bibitem{dugowson2010connectivity}
S.~Dugowson.
\newblock On connectivity spaces.
\newblock {\em Cahiers de Topologie et G\'eom\'etrie Diff\'erentielle Cat\'egoriques}, LI(4):282--315, 2010.

\bibitem{freyd_scedrov_1990}
P.~Freyd and A.~Scedrov.
\newblock {\em Categories, Allegories}, volume~39 of {\em Mathematical Library}.
\newblock North-Holland, 1990.

\bibitem{Hammer1968Sep}
P.~C. Hammer and W.~E. Singletary.
\newblock {Connectedness-equivalent spaces on the line}.
\newblock {\em Rend. Circ. Mat. Palermo}, 17(3):343--355, September 1968.

\bibitem{kelly_1955}
J.~L. Kelly.
\newblock {\em Topology}.
\newblock The University Series in Higher Mathematics. D.~van~Nostrand~Company, Inc., 1955.

\bibitem{matheron_1985}
G.~Matheron.
\newblock Remarques sur les fermetures-partitions.
\newblock International report, Centre de G\'eostatistique et de Morphologie Math\'ematique, Ecole des Mines de Paris, 1985.

\bibitem{matheron_serra_1988}
G.~Matheron and J.~Serra.
\newblock Strong filters and connectivity.
\newblock In J.~Serra, editor, {\em Image Analysis and Mathematical Morphology}, volume~2, pages 141--157. Academic Press, London, 1988.

\bibitem{article}
J.~Muscat and D.~Buhagiar.
\newblock Connective spaces.
\newblock {\em Series B: Mathematical Science}, 39:1--13, 01 2006.

\bibitem{picado_pultr_2012}
J.~Picado and A.~Pultr.
\newblock {\em Frames and Locales: Topology without Points}.
\newblock Springer, 2012.

\bibitem{OEIS:A374073}
J.~F.~Du Plessis.
\newblock Sequence a374073, number of connected chainmails with $n$ unlabeled elements.
\newblock Online Encyclopedia of Integer Sequences, 2024.
\newblock Accessed: 2025-01-28.

\bibitem{Serra1998Nov}
J.~Serra.
\newblock {Connectivity on Complete Lattices}.
\newblock {\em J. Math. Imaging Vision}, 9(3):231--251, November 1998.

\bibitem{stadler_stadler_2015}
B.~M.~R. Stadler and P.~F. Stadler.
\newblock Connectivity spaces.
\newblock {\em Mathematics in Computer Science}, 9(4):409–436, 2015.

\bibitem{10.2307/1969257}
A.~D. Wallace.
\newblock Separation spaces.
\newblock {\em Annals of Mathematics}, 42(3):687--697, 1941.

\end{thebibliography}
\end{document}